\documentclass[11pt]{amsart}

\usepackage{hyperref}

\usepackage{tikz-network,comment}
\usepackage{lscape}

\usepackage[tmargin=1in,bmargin=1in,rmargin=0.8in,lmargin=0.8in]{geometry}

\usepackage{amssymb}
\usepackage{amsmath}
\usepackage{amsthm}
\usepackage{graphicx}
\usepackage{enumerate}
\usepackage{tikz}
\usepackage{tabularx}
\usetikzlibrary{arrows.meta}
\tikzset{>={Latex[width=1.2mm,length=1.7mm]}}

\theoremstyle{plain}
\newtheorem{thm}{Theorem}[section]

\newtheorem{prop}[thm]{Proposition}

\numberwithin{equation}{section}


\newtheorem{theorem}[equation]{Theorem}

\newtheorem{utheorem}{\textrm{\textbf{Theorem}}}

\theoremstyle{definition}
\newtheorem{example}[thm]{Example}
\newtheorem{defn}[thm]{Definition}
\newtheorem{remark}[thm]{Remark}

\newcommand{\R}{\mathbb{R}}
\newcommand{\sn}{\mathrm{S}_n}
\newcommand{\sm}{\mathrm{S}_m}
\newcommand{\csn}{\mathbb{C}[\sn]}

\newcommand{\K}{\mathcal{B}}
\newcommand{\imm}[1]{\mathrm{Imm}_{#1}}
\newcommand{\sumsb}[1]{\sum_{\substack{#1}}} 
\newcommand{\defeq}{:=}
\newcommand{\spn}{\mathrm{span}}
\newcommand{\sgn}{\mathrm{sgn}}
\newcommand{\ntnsp}{\negthinspace}
\newcommand{\permmon}[2]{#1_{1,#2_1} \ntnsp\cdots {#1}_{n,#2_n}}
\newcommand{\tn}{T_n(\xi)}
\newcommand{\Gr}{\mathrm{Gr}}

\begin{document}

\title[Pl\"ucker inequalities for weakly separated coordinates]{Pl\"ucker inequalities for weakly separated coordinates \\ in totally nonnegative Grassmannian}

\author{Daniel Soskin and Prateek Kumar Vishwakarma}

\address[D.~Soskin]{Department of Mathematics, Lehigh University, Chandler-Ullmann Hall, 17 Memorial Drive East, Bethlehem, PA 18015, USA}
\email{das523@lehigh.edu}

\address[P.K.~Vishwakarma]{Department of Mathematics and Statistics$,$ College West 307$.$14, 3737 Wascana Parkway$,$ University of Regina$,$ SK S$4$S $0$A$2,$ Canada}
\email{prateek.vishwakarma@uregina.ca,~prateekv@alum.iisc.ac.in}

\date{\today}

\keywords{Determinantal inequalities, total nonnegativity, Pl\"ucker relations, oscillating inequalities, weak separability, totally nonnegative Grassmannian, cluster algebras}

\subjclass[2020]{Primary 15A15, 15B48, 15A15; secondary 15A45, 20C08}

\bibliographystyle{dart}

\begin{abstract}
We show that the partial sums of the long Pl\"ucker relations for pairs of weakly separated Pl\"ucker coordinates oscillate around $0$ on the totally nonnegative part of the Grassmannian. Our result generalizes the classical oscillating inequalities by Gantmacher--Krein (1941) and recent results on totally nonnegative matrix inequalities by Fallat--Vishwakarma (2023). In fact we obtain a characterization of weak separability, by showing that no other pair of Pl\"ucker coordinates satisfies this property. 

Weakly separated sets were initially introduced by Leclerc and Zelevinsky and are closely connected with the cluster algebra of the Grassmannian. Moreover, our work connects several fundamental objects such as weak separability, Temperley--Lieb immanants, and Pl\"ucker relations, and provides a very general and natural class of additive determinantal inequalities on the totally nonnegative part of the Grassmannian.
\end{abstract}

\maketitle

\section{Introduction and main results}
A real matrix is called \textit{totally positive/nonnegative} (TP/TNN) if the determinants of all its square submatrices are positive (nonnegative). Total positivity arose initially in a few different areas. It was studied by Gantmacher--Krein \cite{GantKreinOsc} in oscillations of vibrating systems, by Fekete--P\'olya \cite{FP1912} (following Laguerre) in understanding the variation diminishing property of linear operators, by Schoenberg \cite{ASW} in applications to the analysis of real roots of polynomials and spline functions. These matrices play important roles in algebraic and enumerative combinatorics, integrable systems, probability, classical mechanics, and many other areas \cite{ando,FJTNNMatrices,gas2013total,K2}. The area of total positivity continues to be studied by many researchers, see for instance \cite{Brosowsky-Chepuri-Mason-JCTA, Chepuri-Sherman-Bennett-CJM, Khare-Chou-Kannan, Galashin-Karp-Lam, Galashin-Pylyavskyy, Parisi-Sherman-Bennett-Williams, Serhiyenko-Sherman-Bennett-Williams}. Lusztig extended the notion of total positivity to reductive Lie groups \emph{G} \cite{LusztigTP}, where the totally nonnegative part $\emph{G}^{\geq0}$ of \emph{G} is a semialgebraic subset of $\emph{G}$ generated by Chevalley generators. $\emph{G}^{\geq0}$ is the subset of $\emph{G}$ where all elements of the dual canonical basis are nonnegative \cite{lusztig1998introduction}. This concept could be generalized even further to varieties \emph{V}. The totally nonnegative subvariety is defined as a subset of \emph{V} where certain regular functions on \emph{V} have nonnegative values \cite{berenstein1997total,fomin1999db,gekhtman2010cluster}. Lusztig proved that specializations of elements of the dual canonical basis in the representation theory of quantum groups at $q=1$ are totally nonnegative polynomials. Thus, it is important to investigate classes of functions on matrices that are nonnegative on totally nonnegative matrices. We will discuss a source of such functions that are closely related to the \emph{(long) Pl\"ucker relations}.

This project has its origins in the classical work by Gantmacher--Krein \cite{GantKreinOsc} in which they modify the Laplace formula for the determinant, to obtain a sequence of inequalities oscillating about 0 and holding for all totally nonnegative matrices. These were extended to new systems of inequalities by Fallat--Vishwakarma \cite{fallat2023inequalities}, in which they similarly extracted an oscillating set of inequalities from the generalized Laplace identity. To state this result more precisely we introduce some notations: for a matrix $A,$ define $A_{P,Q}$ as the submatrix with rows in $P$ and columns in $Q,$ $[m,n]:=\{m,\dots,n\}$ for integers $m\leq n,$ and $[n]:=[1,n]$ if $1\leq n.$ 

\begin{theorem}[Fallat--Vishwakarma \cite{fallat2023inequalities}]\label{FVthm}
Let $1\leq d < n$ be integers. Suppose $P_d:=[1,d],$ and $Q_{dk} := [n-d,n]\setminus\{n-d+k\},$ for all $k\in [0,d].$ Then
\begin{align*}
\sum_{k=0}^{l} (-1)^{l+k} \det A_{P_{d},Q_{dk}} \det A_{[n]\setminus P_{d},[n]\setminus Q_{dk}} \geq 0, \mbox{ for all }l\in [0,d],
\end{align*}
for all $n\times n$ totally nonnegative matrices $A.$
\end{theorem}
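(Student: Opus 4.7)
The plan is to realize each partial sum
$$T_l(A) \defeq \sum_{k=0}^{l} (-1)^{l+k} \det A_{P_d, Q_{dk}} \det A_{[n]\setminus P_d, [n]\setminus Q_{dk}}$$
as a nonnegative combination of Temperley--Lieb immanants evaluated at $A$, and then invoke the nonnegativity of TL immanants on TNN matrices (Rhoades--Skandera, Skandera). The base case $l = 0$ is immediate: $T_0(A)$ is a single product of two complementary minors of $A$, both nonnegative since $A$ is TNN.

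For the general case I would start from the Rhoades--Skandera expansion of a product of complementary minors,
$$\det A_{I,J} \, \det A_{[n]\setminus I, [n]\setminus J} = \sum_{\tau} c^\tau_{I,J} \, \imm{\tau}(A),$$
where $\tau$ ranges over non-crossing perfect matchings of $[2n]$ (the TL diagrams), each $c^\tau_{I,J} \in \{0,1\}$ is given by an explicit combinatorial rule, and every $\imm{\tau}(A) \geq 0$ when $A$ is TNN. Substituting into $T_l(A)$ and interchanging the order of summation yields
$$T_l(A) = \sum_\tau C^\tau_l \, \imm{\tau}(A), \qquad C^\tau_l \defeq \sum_{k=0}^{l} (-1)^{l+k} c^\tau_{P_d, Q_{dk}},$$
so it suffices to show $C^\tau_l \geq 0$ for every TL diagram $\tau$.

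The geometric input that should make this tractable is that, as $k$ ranges over $[0,l]$, the column sets $Q_{dk}$ differ only by which single element of the sliding window $\{n-d, n-d+1, \ldots, n-d+l\}$ is removed, while the row set $P_d = [1,d]$ is held fixed. A case analysis on how the arcs of $\tau$ interact with this window should show that consecutive terms of $C^\tau_l$ cancel pairwise in the alternating sum, leaving a nonnegative integer contribution from each $\tau$. This telescoping is of the type that governs partial sums of a long Pl\"ucker relation between a weakly separated pair of coordinates.

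The main obstacle I expect is executing this matching-level cancellation cleanly. A more conceptual alternative, better adapted to the present paper's framework, is to view the two factors in each summand of $T_l(A)$ as Pl\"ucker coordinates on the $n \times 2n$ lift $[\I_n \mid A]$ of $A$ into $\Gr(n, 2n)$, verify by a short interval check on $P_d$ and the $Q_{dk}$ that the resulting pair of Pl\"ucker coordinates is weakly separated in the sense of Leclerc--Zelevinsky, and then deduce Theorem~\ref{FVthm} directly from the paper's main oscillation theorem for weakly separated partial Pl\"ucker sums. This converts the combinatorial difficulty into a short structural verification.
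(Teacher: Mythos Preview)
Your first approach is essentially the paper's own argument: expand each complementary product of minors in the Temperley--Lieb immanant basis and observe a telescoping cancellation in the alternating sum. The paper carries this out for the stronger Theorem~\ref{th:refined} (which extends the range of $l$ to $[0,n]$ and hence subsumes Theorem~\ref{FVthm}), and two refinements there are worth noting. First, before counting diagrams the paper applies a cyclic shift and a reflection (Proposition~\ref{pr:sym}) to reduce to the cleaner family $I=[1,n-1]\cup\{n+k\}$, $I^c=[n,2n]\setminus\{n+k\}$; this normalization is what makes your proposed ``case analysis on how the arcs of $\tau$ interact with the window'' essentially trivial. Second, the telescoping is tighter than you suggest: for each intermediate $k$ there are \emph{exactly two} compatible Kauffman diagrams, consecutive products share exactly one of them, and the endpoints contribute one diagram each, so every partial sum collapses to a \emph{single} immanant $\imm{K_l}$ rather than merely a nonnegative combination. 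Your sketch would discover this once executed, but as written it understates how clean the cancellation is.

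Your second route, deducing Theorem~\ref{FVthm} from the main oscillation theorem after checking weak separability of the relevant Pl\"ucker pair, is also valid and the paper explicitly notes that Theorem~\ref{FVthm} (and its extension Theorem~\ref{th:refined}) are special cases of Theorem~\ref{th:ws1}. One caveat: the paper's embedding is $\overline{A}=\begin{pmatrix}A\\ W_0\end{pmatrix}$ into $\Gr^{\ge 0}(n,2n)$ rather than $[\I_n\mid A]$, so your verification of weak separability should be done in that coordinate system; and logically this route is backward relative to the paper's exposition, since Section~\ref{Sec:GenLapIneq} is presented as a warm-up that motivates the general theorem rather than a corollary of it.
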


We shall see in Section~\ref{Sec:GenLapIneq} that these generalized Laplace inequalities are a partial refinement of certain long Pl\"ucker relations using Temperley--Lieb immanants discussed by Rhoades--Skandera \cite{RSkanTLImmp}. Another classical identity that can also be seen as one of the Pl\"ucker relations is the famous Karlin's identity \cite{K2}, and Fallat--Vishwakarma completely refined this identity for TNN matrices (see Theorem~B in \cite{fallat2023inequalities}). These two instances suggest exploring the possible existence of a larger set of ``{Pl\"ucker inequalities}'' over the TNN (totally nonnegative) Grassmannian. This is precisely the content of our main theorem; to state the complete result, we need some classical facts.

Let $1\leq m\leq n$ be integers. Grassmannian $\Gr(m,m+n)$ is the manifold of $m$-dimensional subspaces $V$ of $\R^{m+n}.$ Such subspaces can be represented by matrices $X\in \R^{(m+n) \times m}$ of full rank. Let $X_I$ denote the submatrix corresponding to rows indexed by $m$ element subsets $I \subseteq [m+n]$ and columns indexed by $\{1,2,\dots,m\}.$ Here $\Delta_I(X):=\det X_{I}$ denotes the corresponding maximal minor of $X.$ Note that right multiplication by an invertible $m\times m$ matrix $B$ rescales each $\Delta_I(X)$ by $\det B.$ Thus, the minors $\Delta_I(X)$ together form projective coordinates of $\Gr(m,m+n),$ known as the corresponding \textit{Pl\"ucker coordinates.} These coordinates are related via \textit{Pl\"ucker relations} \cite{gekhtman2010cluster,postnikov2006total}:
\begin{align*}
\Delta_{(i_{1},\dots,i_{m})}\cdot \Delta_{(j_{1},\dots,j_{m})}= \sum_{k=1}^{m} \Delta_{(j_{k},i_{2},\dots,i_{m})}\cdot\Delta_{(j_{1},\dots,j_{k-1},i_{1},j_{k+1},\dots,j_{m})} 
\end{align*}
for all $i_1,\dots,i_{m},j_1,\dots,j_{m}\in [m+n].$ Here a function $\Delta_{(i_{1},...,i_{m})}$ is labelled by an ordered sequence, and equals $\Delta_{\{i_{1},...,i_{m}\}}$ if $i_{1} < \dots < i_{m}.$ Also, $\Delta_{(i_{1},...,i_{m})}=(-1)^{\sgn(w)}\Delta_{(i_{w(1)},...,i_{w(m)})}$ for all permutations $w \in \sm.$ The totally nonnegative (TNN) Grassmannian $\Gr^{\geq 0}(m,m+n)\subset \Gr(m,m+n)$ is the collection of those vector subspaces which have a representative matrix $X\in \R^{(m+n)\times m}$ with all $\Delta_{I} \geq 0.$ (To be precise, TNN Grassmannian refers to those subspaces that have a representative matrix $X\in \R^{(m+n)\times m}$ with all nonzero $\Delta_{I}$ with the same sign.) Similarly, the totally positive (TP) Grassmannian $\Gr^{> 0}(m,m+n)\subset \Gr^{\geq 0}(m,m+n)$ is the collection of those vector subspaces which have a representative matrix $X\in \R^{(m+n)\times m}$ with all $\Delta_{I} > 0.$ (For precision, TP Grassmannian refers to those subspaces that have a representative matrix $X\in \R^{(m+n)\times m}$ with all $\Delta_{I}$ nonzero and of the same sign.) See, for instance, \cite{postnikov2006total} for more details. In this paper we are concerned with certain inequalities in Pl\"ucker coordinates over the entire TNN Grassmannian. For $c_{I,J}\in \R$ and coordinates $\Delta_I ~\&~\Delta_J,$ we call 
\begin{align*}
\sum_{I,J} c_{I,J}\Delta_{I}\Delta_{J} \geq 0 \quad \mbox{over}\quad \Gr^{\geq 0}(m,m+n)
\end{align*}
if for all $V\in \Gr^{\geq 0}(m,m+n)$ and all representative matrices $X \in \R^{(m+n)\times m}$ of $V,$ $\sum_{I,J} c_{I,J}\Delta_{I}(X)\Delta_{J}(X) \geq 0.$ The main result in this paper unifies the classical \cite{GantKreinOsc} and recent \cite{fallat2023inequalities} oscillating inequalities mentioned above for TNN matrices, by refining the Pl\"ucker relations for $\Gr^{\geq 0}(m,m+n).$ To state it, we require some notations:

\begin{defn}\label{DIJlr}
Fix integers $1\leq m\leq n,$ and let $I$ and $J$ be ordered $m$ element subsets of $[m+n].$ Imagine that the elements of $I$ and $J$ are on the circle with points $1,2,\dots,m+n$ marked in clockwise order. 
\begin{enumerate}
\item We call $I,J$ \textit{weakly separated} if the sets $I\setminus J$ and $J\setminus I$ can be separated by a chord in the circle.
\item Suppose $\eta$ denotes the number of elements in sets $I\setminus J$ and $J\setminus I.$ Denote $I\setminus J := \{i_1,\cdots,i_{\eta}\}$ and $J \setminus I := \{j_1,\cdots,j_{\eta}\},$ such that $i_1~{<}_{c}~\dots~{<}_{c}~i_{\eta}~{<}_{c}~ j_{\eta}$ and $i_1~{<}_{c}~j_1~{<}_{c}~\dots~{<}_{c}~j_{\eta}$ in the clockwise order ${<}_{c},$ where each order is decided by moving in clockwise direction on the circle starting from $i_1.$ (For example: if $m=n=6,$ $I=(1,5,3,4,10,11)$ and $J=(2,6,7,8,9,11),$ then $(i_1,i_2,i_3,i_4,i_5)=(10,1,3,4,5)$ and $(j_1,j_2,j_3,j_4,j_5)=(2,6,7,8,9)$. In particular, we choose $i_1$ and $j_{\eta}$ such that all other $i_k,j_k$ lie in between them while we traverse in the clockwise order starting from $i_1.$) Now define the following pair of tuples for $k,r\in [1,\eta]$: 
\begin{equation}\label{Eq:IJlr}
I_{k,r}:=(\dots,j_k,\dots), \quad \mbox{and} \quad 
J_{k,r}:=(\dots,i_r,\dots),
\end{equation}
where $j_k$ and $i_r$ replace each other in $I$ and $J,$ respectively.
\end{enumerate}
\end{defn}

We begin by first describing our main result for weakly separated $I,J.$ In the following inequalities, denote by $I^{\uparrow}$ the re-ordering of $m$ element ordered subset $I$ of $[m+n]$ into increasing coordinates. Let $l,r\in [1,\eta],$ then:
\begin{equation}\label{Main-ex:1}
(-1)^{l}\Big{(}\sum^{l}_{k=1}(-1)^{k} \Delta_{I_{k,r}^{\uparrow}}\Delta_{J_{k,r}^{\uparrow}} + (-1)^{\eta-r}\Delta_{I^{\uparrow}}\Delta_{J^{\uparrow}}\Big{)} \geq 0 \quad \forall~ l \geq \eta-r+1,~ \mbox{ over }\Gr^{\geq 0}(m,m+n).
\end{equation}

This provides a novel class of ``Pl\"ucker-type'' determinantal inequalities that hold over the entire TNN Grassmannian for the class of weakly separated $I,J.$ It also suggests two natural follow-up questions:
\begin{enumerate}
\item Are there analogous Pl\"ucker-type inequalities that hold over $\Gr^{\geq 0}(m,m+n)$ for the remaining cases of $l,r \in [1,\eta]?$ (Here we continue to work with weakly separated $I,J.$)  
\item If yes, then does this also have a converse, in that certain Pl\"ucker-type determinantal inequalities do not hold if $I,J$ are not weakly separated?
\end{enumerate}

Our main theorem provides an affirmative answer to both of these questions, and in the process also characterizes weak separability. We introduce a notation before stating this result: for ordered $m$ element subsets $I:=(i_1,\dots,i_m)$ of $[m+n],$ $\sgn(I) = (-1)^{\sgn(w)}$ where $w\in \mathrm{S}_{m}$ such that $i_{w(1)} < \dots < i_{w(m)}.$
\begin{utheorem}\label{th:ws1}
Let $I,J$ be ordered $m$ element subsets of $[m+n];$ notation as in Definition~\ref{DIJlr}. Consider the following system of oscillating inequalities for $l,r\in [1,\eta]$ over $\Gr^{\geq 0}(m,m+n)$:
\begin{align}\label{ws11:eq3}
\begin{aligned}
\sgn(I_{l,r})\,\sgn(J_{l,r})\sum^{l}_{k=1} \Delta_{I_{k,r}}\Delta_{J_{k,r}}  & \geq 0 \quad \forall~ l < \eta-r+1, \mbox{ and} \\
\sgn(I_{l,r})\,\sgn(J_{l,r})\Big{(}\sum^{l}_{k=1}\Delta_{I_{k,r}}\Delta_{J_{k,r}} - \Delta_{I}\Delta_{J} \Big{)}  & \geq 0 \quad \forall~ l \geq \eta-r+1.  
\end{aligned}
\end{align}
This system holds for all $l,r\in [1,\eta]$ if and only if $I$ and $J$ are weakly separated.
\end{utheorem}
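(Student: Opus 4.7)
The plan is to establish the biconditional by treating the two directions independently. The substantive content lies in the forward direction (weak separability implies~\eqref{ws11:eq3}), which I would prove using the Temperley--Lieb immanant machinery of Rhoades--Skandera discussed in Section~\ref{Sec:GenLapIneq}. The converse direction would proceed by exhibiting, for each non-weakly-separated pair $(I,J)$, specific points of $\Gr^{\geq 0}(m,m+n)$ at which at least one inequality in~\eqref{ws11:eq3} is violated.

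For the forward direction, I would start from the long Pl\"ucker relation, which rewrites $\Delta_I \Delta_J$ as a signed sum $\sum_{k=1}^{\eta}\varepsilon_k\Delta_{I_{k,r}}\Delta_{J_{k,r}}$ for any fixed $r\in[1,\eta]$, with signs $\varepsilon_k$ determined by $\sgn(I_{k,r})\sgn(J_{k,r})$. The key input is the Rhoades--Skandera expansion: for any ordered $m$-subsets $I',J'$ one has $\Delta_{I'}\Delta_{J'}=\sum_D c^{D}_{I',J'}\,\imm{D}$, where the sum ranges over non-crossing pair matchings $D$, the coefficients $c^{D}_{I',J'}$ are nonnegative integers, and each Temperley--Lieb immanant $\imm{D}$ is nonnegative on $\Gr^{\geq 0}(m,m+n)$. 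It therefore suffices to show that the signed partial sum on the left of each inequality in~\eqref{ws11:eq3} has nonnegative coefficients when expanded in the basis $\{\imm{D}\}$. The central combinatorial lemma I would aim to prove is: when $I,J$ are weakly separated, the set of $k\in[1,\eta]$ for which a fixed matching $D$ contributes to $\Delta_{I_{k,r}}\Delta_{J_{k,r}}$ forms a contiguous interval $[k_0(D),k_1(D)]$, and the sign $\sgn(I_{k,r})\sgn(J_{k,r})$ is constant on this interval. Granted this, the oscillating inequalities follow telescopically: partial sums cut off at $l$ pick up the full nonnegative contribution of every matching whose interval lies inside $[1,l]$, and the boundary term $-\Delta_I\Delta_J$ appearing in the second inequality of~\eqref{ws11:eq3} precisely absorbs the ``overflow'' from matchings whose interval straddles $l$.

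For the converse, I would invoke the standard characterization of weak separability: $I$ and $J$ fail to be weakly separated exactly when there exist four indices cyclically ordered as $a<_c b<_c c<_c d$ with $a,c\in I\setminus J$ and $b,d\in J\setminus I$. This crossing pattern breaks the contiguity-and-sign-constancy property above, forcing a matching $D$ to enter the expansion of the left-hand side of some inequality in~\eqref{ws11:eq3} with a negative coefficient. To convert this algebraic obstruction into an analytic violation, I would construct an explicit representative $X\in\R^{(m+n)\times m}$ of a point in $\Gr^{\geq 0}(m,m+n)$---e.g.\ a perturbation of a totally positive representative inside a suitable positroid cell---so that the corresponding bilinear combination of Pl\"ucker coordinates is strictly negative. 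Since the space spanned by TL immanants is closed and the violating coefficient is isolated, a density argument using $\Gr^{> 0}(m,m+n)\subset\Gr^{\geq 0}(m,m+n)$ would propagate a failure from a single offending matching to a genuine counterexample.

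The main obstacle is the combinatorial lemma in the forward direction, which must translate weak separability---a geometric chord-separation condition on a circle---into the contiguity-and-sign-constancy of the $k$-indexed contributions of an arbitrary non-crossing matching. Establishing this requires tracking, through the swap $(i_r\leftrightarrow j_k)$, how the non-crossing matchings in the TL expansion of $\Delta_{I_{k,r}}\Delta_{J_{k,r}}$ deform as $k$ varies, and showing that planarity of the matchings precisely forbids a non-contiguous contributing set whenever a chord separates $I\setminus J$ from $J\setminus I$. I expect this to reduce to a localized argument on the circle diagram near the swap pair, together with a bookkeeping of how the sign of $\sgn(I_{k,r})\sgn(J_{k,r})$ evolves with $k$; this same analysis should, when specialized to a crossing quadruple $a<_c b<_c c<_c d$, manufacture the negative-coefficient matching needed for the converse.
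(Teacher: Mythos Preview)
Your forward direction correctly identifies the tool (Temperley--Lieb immanants and the Rhoades--Skandera expansion) and the telescoping flavour, but the central lemma as you state it is wrong and would not yield the inequalities. When $I,J$ are weakly separated, the sign $\sgn(I_{k,r})\sgn(J_{k,r})$ does \emph{not} stay constant on any interval of $k$'s; it equals $(-1)^k$ up to a global constant (this is exactly the rewriting at the start of the paper's forward proof). Consequently completed intervals contribute \emph{zero}, not a nonnegative amount: each compatible Kauffman diagram appears for a contiguous block of (typically two) consecutive $k$'s, the alternating signs cancel on that block, and the partial sum cut at $l$ equals the immanants of the boundary diagrams left over. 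To repair your lemma you must replace ``sign constant on each interval'' by ``each interval has length at most two'' (so alternation gives exact cancellation on completed intervals), and you must single out the one exceptional diagram $K_0$---the unique matching in $\Phi(I,J)$---which resurfaces inside $\Phi(I_{\eta-r+1,r},J_{\eta-r+1,r})$ and is precisely what forces the extra $-\Delta_I\Delta_J$ for $l\geq \eta-r+1$. The paper carries this out by direct enumeration of $\Phi(I_{k,r},J_{k,r})$ in three cases ($r=\eta$, $2\le r\le\eta-1$, $r=1$), finding one, two, four, or five diagrams per term and exhibiting the telescoping explicitly.

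For the converse, your plan (crossing quadruple $\Rightarrow$ negative TL coefficient $\Rightarrow$ violating point) is plausible in spirit but differs from the paper and is underspecified. The paper does not work directly with a crossing quadruple for general $\eta$. Instead it runs an induction on $\eta$: given non-weakly-separated $I,J$, it finds consecutive $i_p,j_q$ on the circle and restricts to the sub-Grassmannian where those two rows coincide; the system~\eqref{ws11:eq3} then descends to a system for $I'=I$, $J'=(J\setminus\{j_q\})\cup\{i_p\}$ with $\eta$ replaced by $\eta-1$, and the induction hypothesis gives the contradiction. The one subtlety is that $I',J'$ may become weakly separated; in that residual case the paper argues directly (its Case~2) that with $r=\eta$, $l=1$ the product $\Delta_{I_{1,\eta}}\Delta_{J_{1,\eta}}$ has a single compatible Kauffman diagram while $\Delta_I\Delta_J$ has strictly more, so $\Delta_{I_{1,\eta}}\Delta_{J_{1,\eta}}-\Delta_I\Delta_J$ has a negative TL coefficient and Theorem~\ref{tm:plcomb} supplies a violating point. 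Your density-and-positroid sketch would have to be replaced by this kind of concrete Kauffman-diagram count (or an equally explicit construction) to become a proof.
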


\begin{remark}[``Uniform'' reformulation]
Theorem~\ref{th:ws1} can be stated using more ``homogeneous'' terminology. Define for $k,r\in [1,\eta],$
\begin{equation}\label{ws11:eq1}
\Pi_{k,r} :=
\begin{cases}
\Delta_{I_{k,r}}\Delta_{J_{k,r}} - \Delta_{I}\Delta_{J}, & \mbox{if }k=\eta-r+1, \mbox{ and} \\
\Delta_{I_{k,r}}\Delta_{J_{k,r}}, &\mbox{otherwise.}
\end{cases}
\end{equation}
{\it Then the following system of oscillating inequalities holds:
\begin{equation}\label{ws11:eq2}
\sgn(I_{l,r})\,\sgn(J_{l,r})\sum_{k=1}^{l}\Pi_{k,r}\geq 0 \qquad \forall l,r\in [1,\eta],~\mbox{ over }\Gr^{\geq 0}(m,m+n),  
\end{equation}
if and only if $I$ and $J$ are weakly separated.}
\end{remark}

\begin{example}
Recent refinements of the classical Karlin's identity for TNN matrices (see Theorems~B in \cite{fallat2023inequalities}) can be seen as Pl\"ucker-type inequalities in Theorem~\ref{th:ws1} above. To state the inequality, let $m\geq 1$ be an integer. Suppose $T$ is a subset of $[m]$ and $V:=\{v_1<\cdots<v_{m'} \}:=[m]\setminus T.$ Then for all $p\in [m]$ and set $S,$ such that $S\subseteq [m] \setminus \{p\}$ and $|S|=|T|+1,$
\begin{equation}\label{Kar-GK-FV}
(-1)^{1+l}\sum_{k=1}^{l} (-1)^{1+k} \det A\big{(}S\big{|}T\cup \{v_k\}\big{)} \det A\big{(}[n]\setminus\{p\}\big{|}[n]\setminus\{v_k\}\big{)}\geq 0,
\end{equation}
for all $l\in [m'],$ and all $m\times m$ totally nonnegative matrices $A.$

Now, suppose $s:= \max S$ and let $\Gr^{\geq 0}(m,2m+1)_s \subset \Gr^{\geq 0}(m,2m+1)$ corresponds to those representative matrices in which rows $s$ and $s+1$ are identical. It is easy to see that $\Gr^{\geq 0}(m,2m+1)_s$ can be constructed from $\Gr^{\geq 0}(m,2m).$

Suppose $S=\{s_1<\dots < s_{m-m'} < s \}$ and $p<s.$ ($p>s$ can be dealt with similarly.) Now choose 
\[
I=(1,\dots,p-1,p+1,\dots,s,s+1,\dots,m+1),\quad  J = (s_1,\dots,s_{m-m'},2m+2-v_{m'},\dots,2m+2-v_1).
\]
Note that $I,J$ are weakly separated, and choosing $i_r=s$ provides us with the required system of inequalities over $\Gr^{\geq 0}(m,2m+1)_s$ which is equivalent to \eqref{Kar-GK-FV}.


\end{example}

Another special case of Theorem~\ref{th:ws1} consists of the Gantmacher--Krein inequalities \cite{GantKreinOsc}, as well as their generalizations in \cite{fallat2023inequalities} (see Theorem~\ref{FVthm} above). We discuss these inequalities in Section~\ref{Sec:GenLapIneq}. Before this, we recall in Section~\ref{Sec:Pre} the tools needed to prove Theorem~\ref{th:ws1}, including Temperley--Lieb immanants and Kauffman diagrams. In Section~\ref{forward-imp}, we prove one implication of Theorem~\ref{th:ws1}: the system \eqref{ws11:eq3} holds for weakly separated Pl\"ucker coordinates. We conclude by showing the reverse implication in Section~\ref{rev-imp}.

\section{Prerequisites}\label{Sec:Pre}

\subsection{TNN Grassmannian and TNN matrices}\label{TNNGrass}

Let $1\leq m \leq n$ be integers. The set of $n\times m$ TNN matrices can be embedded inside in $\Gr^{\geq 0}(m,m+n)$ via:
\begin{align}\label{Plucker-embed}
\{\mbox{all $n\times m$ TNN matrices}\}\hookrightarrow \Gr^{\geq 0}(m,m+n)\quad\mbox{where}\quad A \mapsto \overline{A}= \begin{pmatrix} A \\ W_{0} \end{pmatrix}
\end{align}
\begin{align*}
\hspace*{-1cm}\mbox{where }W_{0}:=(w_{ij})=\big((-1)^{i+1}\cdot\delta_{j,m-i+1}\big)^{m}_{i,j=1}, \mbox{ i.e. } w_{ij}=\begin{cases} (-1)^{i+1} & \text{if } j=m-i+1,\\ 0 & \text{otherwise.}\end{cases}.
\end{align*}
\noindent It is not very difficult to see that there is a one-to-one correspondence between the minors of $A$ and the maximal minors of $\overline{A}.$ For precision, let $P\subseteq [n],Q \subseteq [m]$ with $|P|=|Q|,$ and note that:
\begin{align*}
\det A_{P,Q}=\det \overline{A}_{I,[m]}=:\Delta_{I}(\overline{A}), \mbox{ where } I:= P \cup \{ m+n+1-j \,|\, j \in [m] \setminus Q\}.
\end{align*}
Conversely, for $n$ element $I\subseteq [m+n],$
\begin{equation*}
\Delta_{I}(\overline A)=\det A_{I \cap [n],[m]\setminus \{m+n+1-i | i \in I \cap [n+1,...,m+n]\}}.
\end{equation*}
This one-to-one correspondence provides us with the following equivalence between inequalities that are quadratic in minors of TNN matrices and inequalities that are quadratic in Pl\"ucker coordinates over the TNN Grassmannian. Here and onward, we interchangeably use $\Delta_{P,Q}(A)$ and $\det A_{P,Q}$ to denote the determinant of the submatrix $A_{P,Q},$ and we follow the convention $\Delta_{\emptyset,\emptyset}(A)=1.$

\begin{theorem}\label{Ineq-equivalence}
Let $1\leq m\leq n$ be integers, and suppose sets $P_i\subseteq [n]$ and $Q_i\subseteq [m]$ with $|P_i|=|Q_i|,$ for $i=1,2.$ Let $c_{P_1,Q_1,P_2,Q_2}=c_{I,J}\in \R,$ where each 
\begin{align*}
I=P_1\cup \{ m+n+1-j \,|\, j \in [m] \setminus Q_1\}\quad \mbox{and}\quad J=P_2\cup \{ m+n+1-j \,|\, j \in [m] \setminus Q_2\}.
\end{align*}
Then the following inequalities are equivalent: 
\begin{align*}
\sum_{P_1,Q_1,P_2,Q_2} c_{P_1,Q_1,P_2,Q_2}\det A_{P_1,Q_1} \det A_{P_2,Q_2} & \geq 0\quad \forall A_{n\times m} \quad TNN. \\
\sum_{I,J} c_{I,J}\Delta_{I}(\overline{A}) \Delta_{J}(\overline{A}) &\geq 0 \quad \forall A_{n\times m} \quad TNN. \\
\sum_{I,J} c_{I,J}\Delta_{I} \Delta_{J} &\geq 0\quad \mbox{ over } \quad \Gr^{\geq 0}(m,m+n).
\end{align*}
\end{theorem}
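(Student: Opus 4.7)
The plan is to prove the three displayed inequalities equivalent as a cycle $(1)\iff(2)\iff(3)$, where the equivalence of the first two is purely notational and the bulk of the work lies in extending the second inequality from matrices of the special form $\overline{A}$ to arbitrary representatives of points in $\Gr^{\geq 0}(m,m+n)$. For $(1)\iff(2)$, observe that the map $(P,Q)\mapsto I := P \cup \{m+n+1-j : j\in[m]\setminus Q\}$ is a bijection between pairs with $P\subseteq[n]$, $Q\subseteq[m]$, $|P|=|Q|$, and $m$-element subsets $I\subseteq[m+n]$; the identity $\det A_{P,Q}=\Delta_I(\overline{A})$ recorded just above then matches the two sums term-by-term. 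Moreover, since $\Delta_I(\overline{A})=\det A_{P,Q}\geq 0$ for every $I$ whenever $A$ is TNN, each $\overline{A}$ represents a point of $\Gr^{\geq 0}(m,m+n)$, and $(3)\Rightarrow(2)$ follows by specializing the third inequality to these representatives.

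The substance is $(2)\Rightarrow(3)$. I first handle the open dense locus of $\Gr^{\geq 0}(m,m+n)$ where $\Delta_{[n+1,m+n]}>0$: let $V$ have representative $X\in\R^{(m+n)\times m}$ with invertible bottom $m\times m$ block $X_{\mathrm{bot}}$, and set $B := X_{\mathrm{bot}}^{-1}W_0$, so that $XB = \overline{A}$ for some $A\in\R^{n\times m}$. A direct computation shows $\det W_0 = 1$, whence $\det B = 1/\Delta_{[n+1,m+n]}(X) > 0$. Since right-multiplication by $B$ rescales each Pl\"ucker coordinate by $\det B$, we get $\Delta_I(\overline{A}) = (\det B)\,\Delta_I(X) \geq 0$ for every $I$; by the first paragraph this forces every minor of $A$ to be nonnegative, so $A$ is TNN. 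Applying $(2)$ to $A$ and dividing through by $(\det B)^2>0$ yields $(3)$ for $X$.

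For a general $V\in\Gr^{\geq 0}(m,m+n)$ with representative $X$ where $\Delta_{[n+1,m+n]}(X)$ may vanish, I close the argument by density and continuity. Using the standard fact that $\Gr^{>0}(m,m+n)$ is dense in $\Gr^{\geq 0}(m,m+n)$ (see, e.g., \cite{postnikov2006total}), together with the existence of local continuous sections of the quotient map from full-rank $(m+n)\times m$ matrices to $\Gr(m,m+n)$, I select totally positive representatives $X_k\to X$ satisfying $\Delta_{[n+1,m+n]}(X_k)>0$. The previous paragraph gives the desired inequality at each $X_k$, and continuity of the quadratic form $\sum c_{I,J}\Delta_I\Delta_J$ in the matrix entries passes it to the limit. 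The main obstacle I anticipate is precisely this density/continuity step: the open-cell reduction is an explicit computation once $\det W_0 = 1$ is observed, but reaching the boundary positroid cells cleanly requires invoking both the density of $\Gr^{>0}$ in $\Gr^{\geq 0}$ and a continuous choice of matrix representatives along the approximating sequence.
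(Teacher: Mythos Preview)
Your proof is correct and follows essentially the same route as the paper: the equivalence $(1)\Leftrightarrow(2)$ is the bijection from the embedding, and $(2)\Rightarrow(3)$ is obtained by normalizing a representative to the form $\overline{A}$ on a dense open locus and then passing to the closure by continuity. The one concrete difference is in the density step: rather than citing the density of $\Gr^{>0}$ and invoking continuous local sections, the paper gives an explicit approximation by left-multiplying any representative $A$ by the totally positive Gaussian kernel matrix $G_{\sigma,m+n}=(e^{-\sigma(j-k)^2})$ and letting $\sigma\to\infty$, which via Cauchy--Binet lands in $\Gr^{>0}$ and converges to $A$ entrywise, thereby sidestepping the need for abstract sections.
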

\begin{proof}
The equivalence of the first two inequalities follows from \eqref{Plucker-embed}. The equivalence between the second and third follows from the density of the \textit{positive} Grassmannian $\Gr^{>}(m,m+n)$ in $\Gr^{\geq 0}(m,m+n),$ and that the Pl\"ucker coordinates form projective coordinates. For completeness, we provide the details. Suppose $A\in \R^{(m+n)\times m}$ be a representative matrix corresponding to $V \in \Gr^{>}(m,m+n).$ There exists $m\times m$ real matrix $B$ such that $A=\begin{pmatrix} A' \\ W_{0} \end{pmatrix} B,$ where $W_{0}:=(w_{ij})=\big((-1)^{i+1}\cdot\delta_{j,m-i+1}\big)^{m}_{i,j=1}.$ Since $\det W_{0} = 1>0,$ all other maximal minors of $\begin{pmatrix} A' \\ W_{0} \end{pmatrix}$ are positive as well, since the Pl\"ucker coordinates are projective coordinates. In particular, $A'$ is totally nonnegative. This implies that the second inequality holds for $A'.$ Using these arguments, we can say that
$$
\sum_{I,J} c_{I,J}\Delta_{I}(A) \Delta_{J}(A)=\sum_{I,J} c_{I,J} (\det B)^2 \Delta_{I}(\overline{A'}) \Delta_{J}(\overline{A'}) = (\det B)^2 \sum_{I,J} c_{I,J}  \Delta_{I}(\overline{A'}) \Delta_{J}(\overline{A'}) \geq 0.    
$$
Therefore, the third inequality holds over the TP Grassmannian, given that the second inequality holds. To prove that it holds over the TNN Grassmannian, we use density. To prove the density, consider the Gaussian kernel matrix $G_{\sigma,m+n} := (e^{-\sigma(j-k)^2})_{j,k=1}^{m+n},$ and suppose $A$ is a representative matrix corresponding to $V\in \Gr^{\geq 0}(m,m+n).$ It is well known that $G_{\sigma,m+n}$ is a totally positive matrix (see \cite{AK-book} for instance). Using Cauchy--Binet formula, one can see that $G_{\sigma,m+n} A$ corresponds to some $U\in \Gr^{>}(m,m+n).$ Density follows since $G_{\sigma,m+n} A \to A$ as $\sigma \to \infty.$ Continuity implies that the third inequality holds over the entire TNN Grassmannian.
\end{proof}

In Subsections~\ref{imm&nonnegativity-1} and \ref{imm&nonnegativity-2}, we shall discuss the nonnegativity of polynomials $p(x)$ with matrix entries, and thus we shall use $X:=(x_{ij})$ instead of $A:=(a_{ij}).$ Also, we use $p(X)$ (instead of $p(x)$) if the matrix $X$ is understandable from the context (for instance in Theorems~\ref{tm:plprod} and \ref{tm:plcomb}).

\subsection{Immanants and nonnegativity}\label{imm&nonnegativity-1}
Some determinantal inequalities can be stated in terms of polynomials in the matrix entries. A polynomial in the matrix entries is called \emph{totally nonnegative} if it attains nonnegative values on totally nonnegative matrices.  Following Littlewood \cite{LittlewoodTGC} and Stanley \cite{StanPos}, we call a particular type of such polynomials {\em immanants}. Given a function $f: \sn \rightarrow \mathbb C$ define the \emph{$f$-immanant} to be the polynomial
\begin{equation}\label{eq:immdef}
\imm f(x) \defeq \sum_{w \in \sn} f(w) \permmon xw \in \mathbb C[x].
\end{equation}
A particular family of immanants are defined for the monoid $\K_n$ generated by the standard basis of a \emph{Temperley--Lieb algebra}
$t_1,\dotsc,t_{n-1}$ subject to the relations
\begin{alignat*}{2}
t_i^2 &= \xi t_i, &\qquad &\text{for } i=1,\dotsc,n-1, \\
t_i t_j t_i &= t_i,   &\qquad &\text{if }  |i-j|=1,\\
t_i t_j &= t_j t_i,   &\qquad &\text{if }  |i-j| \geq 2.
\end{alignat*}
When $\xi = 2$ we have the isomorphism $T_n(2) \cong \csn/(1 + s_1 + s_2 + s_1s_2 + s_2s_1 + s_1s_2s_1).$ (See for instance \cite{FanMon}, \cite[Sections\,2.1 and \,2.11]{GHJ}, and \cite[Section\,7]{WestburyTL}.)
Specifically, the isomorphism is given by
\begin{equation}\label{eq:sntotn}
\begin{aligned}
\sigma : \csn \rightarrow \tn ~\mbox{ with }~ s_i \mapsto t_i - 1.
\end{aligned}
\end{equation}
It is known that $|\K_n|$ is the $n^{\mbox{th}}$ Catalan number $C_n = \tfrac{1}{n+1}\tbinom{2n}{n}.$ Diagrams of the basis elements of $T_n(\xi),$ made popular by Kauffman~\cite[Section\,4]{KauffState} are (undirected) graphs with $2n$ vertices and $n$ noncrossing edges, such that each edge lies in the convex hull spanned by $2n$ vertices (similarly to one of the definitions of Catalan numbers involving $n$ noncrossing chords on the circle with $2n$ vertices). We define a Temperley--Lieb immanant 
$\imm{K}(x)$ for each $K \in \K_n$ in terms of the function
\begin{equation}\label{eq:ftau}
\begin{aligned}
f_K: \csn \rightarrow \mathbb{R} ~\mbox{ with }~ w \mapsto \text{ coefficient of $K$ in } \sigma(w),
\end{aligned}
\end{equation}
and extend it linearly. To economize notation, we will write $\imm{K}$  instead of $\imm{f_K}$,
\begin{equation*}
\imm{K}(x) 
= \sum_{w \in \sn} f_K(w)x_{1,w_1} \cdots x_{n,w_n}.
\end{equation*}
It was shown by Rhoades--Skandera \cite{RSkanTLImmp} that \emph{Temperley--Lieb immanants} are a basis of the space 
\begin{equation}\label{eq:tlspace}
\spn_{\mathbb R} \{ \det X_{P,Q}  \det X_{P^c,Q^c}  \,|\, P,Q \subseteq [n] \mbox{ with }|P|=|Q| \}
\end{equation} 
and that they are TNN. Furthermore, these are the extreme rays of the cone of TNN immanants in this space~\cite[Theorem\,10.3]{RSkanTLImmp}.
\begin{thm}[Rhoades--Skandera \cite{RSkanTLImmp}]\label{SB}
Each immanant of the form
\begin{equation}\label{eq:sumofprodsof2minors}
\imm f(x) = \sumsb{P,Q \subseteq [n]\\ |P|=|Q|} c_{P,Q}
\det X_{P,Q} \det X_{P^c, Q^c} 
\end{equation}
is a totally nonnegative polynomial if and only if it is equal to a nonnegative linear combination of Temperley--Lieb immanants.
\end{thm}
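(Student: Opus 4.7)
The backward direction is essentially immediate: the cited fact that each Temperley--Lieb immanant $\imm{K}$ is TNN implies that any nonnegative real combination $\sum_{K \in \K_n} c_K \imm{K}$ with $c_K \geq 0$ is TNN by additivity. So the substance of the theorem lies in the forward direction.

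Since the Temperley--Lieb immanants form a basis of the space \eqref{eq:tlspace}, any $\imm f$ of the form \eqref{eq:sumofprodsof2minors} has a unique expansion $\imm f = \sum_{K \in \K_n} c_K \imm{K}$. The goal is to show that total nonnegativity of $\imm f$ forces $c_K \geq 0$ for every Kauffman diagram $K$. The plan is to realize each coefficient-extraction $\imm f \mapsto c_K$ as evaluation at a specific TNN matrix. Concretely, for each $K \in \K_n$, I would construct an $n \times n$ TNN matrix $X_K$ satisfying the separation property
\begin{equation*}
\imm{K'}(X_K) = 0 \text{ for all } K' \neq K, \qquad \imm{K}(X_K) > 0.
\end{equation*}
Given such a family, the TNN hypothesis yields $0 \leq \imm f(X_K) = c_K \, \imm{K}(X_K)$, hence $c_K \geq 0$.

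The natural candidates for the matrices $X_K$ come from planar networks. To each noncrossing matching $K$ one associates a planar directed acyclic graph $N_K$ with $n$ sources on one side and $n$ sinks on the other, weighted so that the path matrix $X_K$ is TNN via Lindstr\"om--Gessel--Viennot. In such a model, the products $\det (X_K)_{P,Q} \det (X_K)_{P^{c},Q^{c}}$ count pairs of families of nonintersecting paths, and the noncrossing topology of $N_K$ forces only those $(P,Q)$ whose combined matching pattern agrees with $K$ to contribute nontrivially. Matching this combinatorial vanishing with the definition of $f_K(w)$ as the coefficient of $K$ in $\sigma(w) = (t_{i_1}-1)\cdots(t_{i_\ell}-1)$ for a reduced expression of $w$ should then yield the desired orthogonality. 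A secondary ingredient is to check that $\imm{K}(X_K)$ is strictly positive; this should follow from identifying a single surviving monomial in the Kauffman diagram expansion that comes from the ``identity'' routing of $N_K$.

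The main obstacle will be verifying the separation property rigorously. In particular, one must reconcile signs: the expansion $\sigma(w) = \prod (t_{i_j}-1)$ produces sign-alternating contributions as one toggles between Temperley--Lieb generators and the identity, while the products $\det X_{P,Q} \det X_{P^{c},Q^{c}}$ contribute only nonnegatively in a TNN path-matrix setting. The cancellations coming from the $(-1)$ summands in $(t_{i_j}-1)$ must align precisely with the sign pattern of Laplace-type expansions involving complementary minors, so that the surviving terms in $\imm{K'}(X_K)$ vanish for $K' \neq K$. Once this delicate sign bookkeeping is settled, the functionals $\imm f \mapsto \imm f(X_K)$ form the dual basis witnessing that the Temperley--Lieb immanants span the extreme rays of the cone of TNN immanants inside \eqref{eq:tlspace}, completing the forward implication.
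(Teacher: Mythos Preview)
The paper does not prove this theorem; it is stated as a cited result from Rhoades--Skandera \cite{RSkanTLImmp}, so there is no proof in the present paper to compare your proposal against.

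That said, your outline is essentially the strategy of the original Rhoades--Skandera argument: for each $K \in \K_n$ they construct a planar network (a wiring diagram built from the noncrossing matching $K$) whose path matrix $X_K$ is TNN by Lindstr\"om--Gessel--Viennot and satisfies the separation property $\imm_{K'}(X_K) = 0$ for $K' \neq K$ and $\imm_{K}(X_K) > 0$. Evaluating a TNN immanant $\imm f = \sum_K c_K \imm_K$ at each $X_K$ then forces $c_K \geq 0$, exactly as you describe. Your identification of the sign bookkeeping between the $(t_i - 1)$ expansion and the complementary-minor expansion as the crux is accurate; in the original this is handled by an explicit combinatorial correspondence.

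As written, however, your proposal is a plan rather than a proof: the network $N_K$ is not actually constructed, and the separation property is asserted with phrases like ``should then yield'' and ``once this delicate sign bookkeeping is settled.'' These are precisely the nontrivial parts. If you intend this as a standalone proof you would need to specify $N_K$ concretely and carry out the verification; otherwise it is appropriate simply to cite \cite{RSkanTLImmp}, as the present paper does.
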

In fact each complementary product of minors is a $0$-$1$ linear
combination of Temperley--Lieb immanants~\cite[Proposition\,4.4]{RSkanTLImmp}. We now restate this theorem for products of complementary Pl\"ucker coordinates of the representative $\overline{X}$ of the $n\times n$ totally nonnegative matrix $X.$
\begin{thm}[Rhoades--Skandera \cite{RSkanTLImmp}]\label{t:oneprod}
For $I \subseteq [2n]$, $|I|=n$ we have 
\begin{equation}\label{eq:pl1}
\Delta_{I}(\overline{X})\Delta_{I^c}(\overline{X}) =\sum_{K \in \K_n}b_{K}\imm{K}(x),    
\end{equation}
$
\mbox{where, }~  b_{K}=\begin{cases}
      1 & \text{if each edge in $K$ connects an element from $I$ and $I^c$}, \\
      0&\text{otherwise.}
    \end{cases}
$
\end{thm}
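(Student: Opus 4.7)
The plan is to deduce Theorem~\ref{t:oneprod} from Proposition~4.4 of \cite{RSkanTLImmp} (recalled in the paragraph preceding the theorem) by transporting its expansion for a complementary pair of minors of $X$ through the embedding $X\mapsto\overline X$ of \eqref{Plucker-embed} with $m=n$. The nontrivial step will be a purely combinatorial matching of Rhoades--Skandera's edge-colouring condition with the condition on $I,I^c$ in the statement.

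First I would set up the dictionary of Subsection~\ref{TNNGrass} between the Pl\"ucker coordinates of $\overline X$ and the minors of $X$. For $I\subseteq[2n]$ with $|I|=n$, put $P:=I\cap[n]$ and $Q:=[n]\setminus\{2n+1-i:i\in I\cap[n+1,2n]\}$. The formula from Subsection~\ref{TNNGrass} then gives $\Delta_I(\overline X)=\det X_{P,Q}$, and the same recipe applied to $I^c$ produces the pair $(P^c,Q^c)$; one checks $[n]\setminus\{2n+1-i:i\in I^c\cap[n+1,2n]\}=Q^c$, so $\Delta_{I^c}(\overline X)=\det X_{P^c,Q^c}$. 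Consequently
$$\Delta_I(\overline X)\,\Delta_{I^c}(\overline X)=\det X_{P,Q}\det X_{P^c,Q^c},$$
and by Proposition~4.4 of \cite{RSkanTLImmp} the right-hand side equals $\sum_{K\in\K_n}c_K\imm K(x)$ with $c_K\in\{0,1\}$. The coefficient $c_K$ equals $1$ precisely when, after colouring the top vertices $1,\dotsc,n$ of the Kauffman diagram $K$ by membership in $P$ vs.\ $P^c$ and the bottom vertices $1',\dotsc,n'$ by membership in $Q^c$ vs.\ $Q$, every edge of $K$ joins two oppositely coloured vertices.

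It remains to match this colouring condition with the one in the statement. I would fold the two-row Kauffman picture onto the circle of $2n$ vertices $1,\dotsc,2n$ via $k\mapsto k$ on the top row and $k'\mapsto 2n+1-k$ on the bottom row. A top vertex $v\in[n]$ carries the colour determined by $v\in P \Leftrightarrow v\in I$. A bottom vertex $k'$, now relabelled $v=2n+1-k\in[n+1,2n]$, carries the colour determined by $k\in Q^c \Leftrightarrow v\in I$. Hence under the folding, each vertex $v\in[2n]$ carries one colour exactly when $v\in I$ and the other exactly when $v\in I^c$, uniformly across both rows. Rhoades--Skandera's condition therefore becomes ``every edge of $K$ joins an element of $I$ to an element of $I^c$,'' which is the coefficient formula for $b_K$ asserted in the theorem.

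The main obstacle is the bookkeeping in the last paragraph: one must keep the top/bottom colour conventions of Proposition~4.4 carefully aligned so that the reflection $k\mapsto 2n+1-k$ built into the $(P,Q)\leftrightarrow I$ dictionary produces a \emph{uniform}, rather than flipped, colouring across the two rows. Once this matching is verified, Proposition~4.4 supplies the nontrivial content.
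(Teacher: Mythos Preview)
Your proposal is correct and aligns with the paper's approach: the paper does not give its own proof of Theorem~\ref{t:oneprod} but simply presents it as a restatement of \cite[Proposition~4.4]{RSkanTLImmp} in Pl\"ucker coordinate language via the embedding~\eqref{Plucker-embed}, and you have spelled out exactly the dictionary and colouring verification that makes this restatement valid.
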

 
One of the possible methods to show the nonnegativity of an element from \eqref{eq:tlspace} is to count 2-colorings of the Kauffman diagrams, where each edge connects vertices of opposite colors. This has been discussed in detail by Skandera--Soskin \cite{skan2022sosk}, and we show this via the following example. In it and in what follows, we adopt the following \textbf{notation}: given a subset $I = \{ i_1, \dots, i_n \}$ of $[2n]$ with $i_1 < \cdots < i_n$, we write $\Delta_{[i_1, \dots, i_n]}:= \Delta_{(i_1, \dots, i_n)} = \Delta_{\{ i_1, \dots, i_n \}}$.

\begin{example}\label{ex:1} 
For any $3 \times 3$ totally nonnegative matrix $X$ the following inequality holds.
\begin{align*}
\Delta_{[123]}(\overline{X})\Delta_{[456]}(\overline{X}) \leq \Delta_{[124]}(\overline{X})\Delta_{[356]}(\overline{X}).
\end{align*}
\end{example}
\noindent Indeed, following Theorem~\ref{t:oneprod} we do get the inequality because
$$    
\Delta_{[123]}(\overline{X})\Delta_{[456]}(\overline{X})=\imm{K_1}(x), \mbox{ and }\Delta_{[124]}(\overline{X})\Delta_{[356]}(\overline{X})=\imm{K_1}(x)+\imm{K_2}(x),   
$$
where the diagrams $K_1$ and $K_2$ are shown in Figure~\ref{eq:ex1}. Note that vertices corresponding to the sets $I$ and $I^c$ are colored in white and black respectively. The first coloring admits only one bi-colored matching $K_1,$ while the second coloring admits exactly two bi-colored matchings $K_1$ and $K_2$. Thus, 
\begin{align*}
\Delta_{[124]}(\overline{X})\Delta_{[356]}(\overline{X}) - \Delta_{[123]}(\overline{X})\Delta_{[456]}(\overline{X}) =  \big{(}\imm{K_1}(x)+\imm{K_2}(x)\big{)} - \imm{K_1}(x)
 = \imm{K_2}(x)
 \geq 0.
\end{align*}

\begin{figure}
\begin{center}
\begin{tikzpicture}
\Vertex[x=0,y=0,size=0.1,color=black,position=right,fontscale=1.5,label=$v_6$]{6}
\Vertex[x=0,y=1,size=0.1,color=black,position=right,fontscale=1.5,label=$v_5$]{5}
\Vertex[x=0,y=2,size=0.1,color=black,position=right,fontscale=1.5,label=$v_4$]{4}

\Vertex[x=-1,y=2,size=0.1,color=white,position=left,fontscale=1.5,label=$v_3$]{3}
\Vertex[x=-1,y=1,size=0.1,color=white,position=left,fontscale=1.5,label=$v_2$]{2}
\Vertex[x=-1,y=0,size=0.1,color=white,position=left,fontscale=1.5,label=$v_1$]{1}

\Edge[](1)(6) \Edge(2)(5) \Edge(3)(4)

\Vertex[x=4,y=0,size=0.1,color=black,position=right,fontscale=1.5,label=$v_6$]{66}
\Vertex[x=4,y=1,size=0.1,color=black,position=right,fontscale=1.5,label=$v_5$]{55}
\Vertex[x=4,y=2,size=0.1,color=white,position=right,fontscale=1.5,label=$v_4$]{44}

\Vertex[x=3,y=2,size=0.1,color=black,position=left,fontscale=1.5,label=$v_3$]{33}
\Vertex[x=3,y=1,size=0.1,color=white,position=left,fontscale=1.5,label=$v_2$]{22}
\Vertex[x=3,y=0,size=0.1,color=white,position=left,fontscale=1.5,label=$v_1$]{11}
 
\Edge(55)(22) \Edge(44)(33) \Edge(11)(66)

\Vertex[x=7,y=0,size=0.1,color=black,position=right,fontscale=1.5,label=$v_6$]{666}
\Vertex[x=7,y=1,size=0.1,color=black,position=right,fontscale=1.5,label=$v_5$]{555}
\Vertex[x=7,y=2,size=0.1,color=white,position=right,fontscale=1.5,label=$v_4$]{444}

\Vertex[x=6,y=2,size=0.1,color=black,position=left,fontscale=1.5,label=$v_3$]{333}
\Vertex[x=6,y=1,size=0.1,color=white,position=left,fontscale=1.5,label=$v_2$]{222}
\Vertex[x=6,y=0,size=0.1,color=white,position=left,fontscale=1.5,label=$v_1$]{111}
 
\Edge[bend=45](333)(222) \Edge[bend=-45](444)(555) \Edge(111)(666) 
\end{tikzpicture}
\hspace*{-2cm}\caption{Kauffman diagrams $K_1,$ $K_1,$ and $K_2,$ respectively, as refered to in Example~\ref{ex:1}.}\label{eq:ex1}
\end{center}
\end{figure}

\subsection{Immanants for general product of two minors}\label{imm&nonnegativity-2}
Let $M,M'$ be multisets of the same size whose elements are $m_1\leq \dots \leq m_k$ and $m'_1\leq\dots\leq m'_k,$ respectively. The corresponding generalized minor is defined as the determinant of the submatrix $X_{M,M'}:=(x_{m_i,m_j'})_{i,j=1}^{k}$ of $X$ with possibly repeated rows and columns.
Clearly, any generalized minor of a totally nonnegative matrix is nonnegative. We consider the space of all polynomials of the form 
\begin{equation}\label{eq:space}
\sum_{(P_1,Q_1,P_2,Q_2)}c_{P_1,Q_1,P_2,Q_2} \det X_{P_1,Q_1} \det X_{P_2,Q_2} 
\end{equation}
where the sum is over all quadruples $(P_1,Q_1,P_2,Q_2)$ which satisfy $|P_1|=|Q_1|,$ $|P_2|=|Q_2|,$ and the union with multiplicities $P_1\Cup P_2 = M, Q_1\Cup Q_2 = M',$ for the pair $M,M'.$ It was shown by Rhoades--Skandera~\cite{RSkanTLImmp} that an element of this space is nonnegative for all totally nonnegative matrices $X$ if and only if it is equal to a nonnegative linear combination of Temperley--Lieb immanants of the generalized matrix $X_{M,M'}.$ We restate these connections in terms of Pl\"ucker coordinates of $\overline{X}.$
%
%
We associate to each product $\Delta_{I}(\overline{X})\Delta_{J}(\overline{X})$ a subset of the basis elements of an appropriate Temperley--Lieb algebra as follows. We label the vertices of a generic element of $\K_s$ by $v_1,\ldots,v_{2s},$ where $v_1$ is the bottom left element and enumeration is directed clockwise.

\begin{defn}\label{def:prmatch} Let $1\leq m\leq n$ be integers, and suppose $X$ is a $n \times m$ totally nonnegative matrix, and $\overline{X}$ is the representative of it under the Pl\"ucker embedding \eqref{Plucker-embed}. Let $I,J$ be $m$ element subsets of $[m+n],$ and $s\defeq |I\cap [n]|+|J \cap [n]|.$ We construct a pre-matching on the vertices of a generic element of $\K_s$ by $v_1,\ldots,v_{2s}$ as follows. We initialize $j=1,$ start with vertex $v_j=v_1,$ and then apply to it the following coloring scheme, sequentially for $i=1,2,\dots,n$:
$$
\begin{cases}
      \text{color $v_{j}$ in white,~then consider $v_{j+1},$ and increase $i$ to $i+1$} & \text{if $i \in I \setminus J$}, \\
      \text{color $v_{j}$ in black,~then consider $v_{j+1},$ and increase $i$ to $i+1$} & \text{if $i \in J \setminus I$}, \\
      \text{connect $v_{j}$ and $v_{j+1}$ by an edge,~then consider $v_{j+2}$, and increase $i$ to $i+1$} & \text{if $i \in J \cap I$}, \\
      \text{do nothing,~continue considering $v_{j}$, and increase $i$ to $i+1$}  & \text{if $i \notin J \cup I$}. \\
    \end{cases}
$$
Now, continue the same process, sequentially for $i= 1+n,n+2,\dots,m+n$ (with a slight change as shown below): 
$$
    \begin{cases}
      \text{color $v_{j}$ in white,~then consider $v_{j+1},$ and increase $i$ to $i+1$} & \text{if $i \in I \setminus J$}, \\
      \text{color $v_{j}$ in black,~~then consider $v_{j+1},$ and increase $i$ to $i+1$} & \text{if $i \in J \setminus I$}, \\
      \text{do nothing,~continue considering $v_{j},$ and increase $i$ to $i+1$} & \text{if $i \in J \cap I$}, \\
      \text{connect $v_{j}$ and $v_{j+1}$ by an edge,~then consider $v_{j+2},$ and increase $i$ to $i+1$} & \text{if $i \notin J \cup I$}. \\
    \end{cases}
$$
Call a basis element $K \in \K_s$ \emph{compatible} with the pair $(I,J)$ if its Kauffman diagram contains all the edges determined above and the remaining edges connect vertices of opposite colors. Define $\Phi(I,J)$ to be the set of all elements in $\K_s$ which are compatible with $(I,J)$. Define $E(I,J)$ to be the set of mandatory edges in $K \in \Phi(I,J)$ predetermined by $I\cap J.$ We illustrate this definition with the following example.
\end{defn}

\begin{example}\label{Ex:prematch1}
Let $m=n=7,~I=[1~4~5~6~8~9~10],$ and $J=[5~7~8~9~10~11~13]$. Then $I\cap [n]=[1~4~5~6],$~$J\cap [n]=[5~7].$ Thus, $s=|[1~4~5~6]|+|[5~7]|=6.$ We consider vertices $v_1,\dots,v_{12}$ of a generic element in $\K_{12}.$ Since $1 \in I \setminus J,$ vertex $v_1$ is white. Then $2,3 \notin I \cup J,$ so we skip them. Since $4 \in I \setminus J,$ vertex $v_2$ is white. Since $5 \in I \cap J,$ vertices $v_3$ and $v_4$ are connected by edge. Since $6 \in I \setminus J,$ vertex $v_5$ is white. Since $7 \in J \setminus I,$ vertex $v_6$ is black. 
    
Now we consider elements from the second half of $[1,14].$ Since $8,~9,~10 \in J \cap I,$ we skip them. Then $11 \in J \setminus I,$ thus vertex $v_7$ is black. Since $12 \notin I\cup J,$ we connect $v_8$ and $v_9$ by an edge. Then $13 \in J \setminus I,$ so vertex $v_{10}$ is black. Since $14 \notin I\cup J,$ we connect $v_{11}$ and $v_{12}$ by an edge. The resulting pre-matching is shown in Figure~\ref{fig1}. We discuss a more involved form of this example in the next.
\end{example}

\begin{example}\label{Ex:prematch2}
Let $m=5,~n=7,~I=[1~2~5~7~11],$ and $J=[2~3~4~6~7]$. Then $I\cap [n]=[1~2~5~7],$~$J\cap [n]=[2~3~4~6~7].$ Thus, $s=|[1~2~5~7]|+|[2~3~4~6~7]|=9.$ We consider vertices $v_1,\dots,v_{18}$ of a generic element in $\K_{18}.$ Since $1 \in I \setminus J,$ vertex $v_1$ is white. Then $2 \in I \cap J,$ so vertices $v_2$ and $v_3$ are connected by an edge. Since $3,~4 \in J \setminus I,$ vertex $v_4$ and $v_5$ are black. Since $5 \in I \setminus J,$ vertex $v_6$ is white. Since $6 \in J \setminus I,$ vertex $v_7$ is black. Since $7 \in I \cap J,$ vertices $v_8$ and $v_9$ are connected by an edge.
    
Now we consider elements from $[8,12].$ Since $8,~9,~10 \notin J \cup I,$ we have three pairs of edges $v_{10}$ and $v_{11}$, $v_{12}$ and $v_{13}$, $v_{14}$ and $v_{15}$. Then $11 \in I \setminus J,$ thus vertex $v_{16}$ is white. Since $12 \notin I\cup J,$ we connect $v_{17}$ and $v_{18}$ by an edge. The resulting pre-matching is shown in Figure~\ref{fig1}.

Note that according to the embedding \eqref{TNNGrass} $$\Delta_{[1~2~5~7~11]}(\overline{X})\Delta_{[2~3~4~6~7]}(\overline{X}) = \Delta_{[1~2~5~7],[1~3~4~5]}(X)\Delta_{[2~3~4~6~7],[1~2~3~4~5]}(X).$$ Such product is a sum of Temperley--Lieb immanants of the generalized minor of the matrix $X$ $$X_{[1~2~2~3~4~5~6~7~7],[1~1~2~3~3~4~5~6~7~7]}.$$
\end{example}

\begin{figure}
\begin{center}
\begin{tikzpicture}
\Vertex[x=1,y=0,size=0.1,color=black,position=right,fontscale=1.5,shape=rectangle,label=$v_{12}$]{12}
\Vertex[x=1,y=1,size=0.1,color=black,position=right,fontscale=1.5,shape=rectangle,label=$v_{11}$]{11}
\Vertex[x=1,y=2,size=0.1,color=black,position=right,fontscale=1.5,label=$v_{10}$]{10}
\Vertex[x=1,y=3,size=0.1,color=black,position=right,fontscale=1.5,shape=rectangle,label=$v_9$]{9}
\Vertex[x=1,y=4,size=0.1,color=black,position=right,fontscale=1.5,shape=rectangle,label=$v_8$]{8}
\Vertex[x=1,y=5,size=0.1,color=black,position=right,fontscale=1.5,label=$v_7$]{7}

\Vertex[x=0,y=5,size=0.1,color=black,position=left,fontscale=1.5,label=$v_6$]{6}
\Vertex[x=0,y=4,size=0.1,color=white,position=left,fontscale=1.5,label=$v_5$]{5}
\Vertex[x=0,y=3,size=0.1,color=black,position=left,fontscale=1.5,shape=rectangle,label=$v_4$]{4}
\Vertex[x=0,y=2,size=0.1,color=black,position=left,fontscale=1.5,shape=rectangle,label=$v_3$]{3}
\Vertex[x=0,y=1,size=0.1,color=white,position=left,fontscale=1.5,label=$v_2$]{2}
\Vertex[x=0,y=0,size=0.1,color=white,position=left,fontscale=1.5,label=$v_1$]{1}
\Edge(3)(4) \Edge(9)(8) \Edge(11)(12)

\Vertex[x=5,y=0,size=0.1,color=black,position=right,fontscale=1.5,shape=rectangle,label=$v_{18}$]{2-18}
\Vertex[x=5,y=1,size=0.1,color=black,position=right,fontscale=1.5,shape=rectangle,label=$v_{17}$]{2-17}
\Vertex[x=5,y=2,size=0.1,color=white,position=right,fontscale=1.5,label=$v_{16}$]{2-16}

\Vertex[x=5,y=3,size=0.1,color=black,position=right,fontscale=1.5,shape=rectangle,label=$v_{15}$]{2-15}
\Vertex[x=5,y=4,size=0.1,color=black,position=right,fontscale=1.5,shape=rectangle,label=$v_{14}$]{2-14}
\Vertex[x=5,y=5,size=0.1,color=black,position=right,fontscale=1.5,shape=rectangle,label=$v_{13}$]{2-13}
\Vertex[x=5,y=6,size=0.1,color=black,position=right,fontscale=1.5,shape=rectangle,label=$v_{12}$]{2-12}
\Vertex[x=5,y=7,size=0.1,color=black,position=right,fontscale=1.5,shape=rectangle,label=$v_{11}$]{2-11}
\Vertex[x=5,y=8,size=0.1,color=black,position=right,fontscale=1.5,shape=rectangle,label=$v_{10}$]{2-10}

\Vertex[x=4,y=8,size=0.1,color=black,position=left,fontscale=1.5,shape=rectangle,label=$v_9$]{2-9}
\Vertex[x=4,y=7,size=0.1,color=black,position=left,fontscale=1.5,shape=rectangle,label=$v_8$]{2-8}
\Vertex[x=4,y=6,size=0.1,color=black,position=left,fontscale=1.5,label=$v_7$]{2-7}

\Vertex[x=4,y=5,size=0.1,color=white,position=left,fontscale=1.5,label=$v_6$]{2-6}
\Vertex[x=4,y=4,size=0.1,color=black,position=left,fontscale=1.5,label=$v_5$]{2-5}
\Vertex[x=4,y=3,size=0.1,color=black,position=left,fontscale=1.5,label=$v_4$]{2-4}
\Vertex[x=4,y=2,size=0.1,color=black,position=left,fontscale=1.5,shape=rectangle,label=$v_3$]{2-3}
\Vertex[x=4,y=1,size=0.1,color=black,position=left,fontscale=1.5,shape=rectangle,label=$v_2$]{2-2}
\Vertex[x=4,y=0,size=0.1,color=white,position=left,fontscale=1.5,label=$v_1$]{2-1}

\Edge(2-2)(2-3) \Edge(2-8)(2-9) \Edge(2-10)(2-11) \Edge(2-12)(2-13) \Edge(2-14)(2-15)
\Edge(2-17)(2-18)

\end{tikzpicture}\caption{Pre-matching diagrams for Examples~\ref{Ex:prematch1} and \ref{Ex:prematch2} respectively.}\label{fig1}
\end{center}
\end{figure}
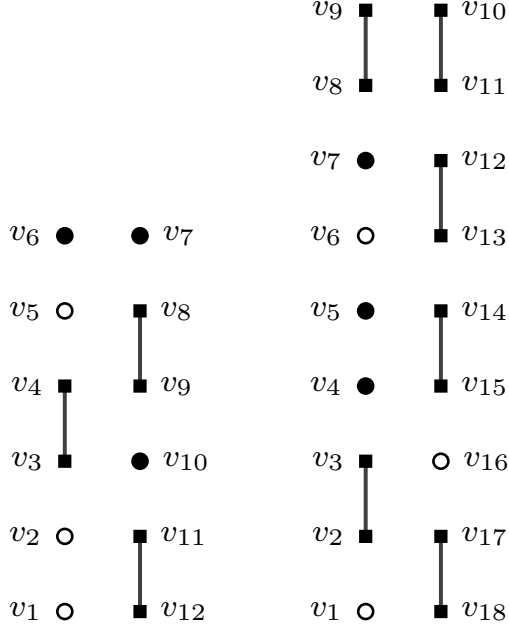


Products of the form $\Delta_{I}(\overline{X})\Delta_{J} (\overline{X})$ can be expressed as a linear combination of immanants of the generalized submatrices $X_{(P_1 \Cup P_2, Q_1 \Cup Q_2)},$ where $(P_1,Q_1)$ are the row and column indices corresponding to the Pl\"ucker coordinate $I,$ and $(P_2,Q_2)$ are the row and column indices corresponding to $J.$ With these observations and the discussion above, the following results can be stated:
\begin{thm}[Rhoades--Skandera \cite{RSkanTLImmp}]\label{tm:plprod}
Let $I,J$ and $(P_1,Q_1)$ and $(P_2,Q_2)$ be defined as above. Then we have 
\begin{equation}\label{eq:pldup}
\Delta_{I}(\overline{X})\Delta_{J} (\overline{X})=\sum_{K}\imm{K}(X_{P_1 \Cup P_2, Q_1 \Cup Q_2}),
\end{equation}    
where the sum is over elements $K \in \Phi(I,J)$.
\end{thm}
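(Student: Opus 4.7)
The plan is to reduce the identity to the generalized Rhoades--Skandera expansion for a product of two minors of $X$ and then match the contributing Kauffman diagrams with $\Phi(I,J)$. The approach has three essentially translation-type steps.

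First, I would use the Pl\"ucker embedding \eqref{Plucker-embed} to rewrite each factor on the left-hand side as a signed minor of $X$. Expanding $\Delta_I(\overline{X})$ along the rows of $\overline{X}$ coming from $W_0$ (indexed by $I \cap [n+1, m+n]$) yields $\Delta_I(\overline{X}) = \epsilon_I \Delta_{P_1, Q_1}(X)$ with $\epsilon_I \in \{\pm 1\}$ arising from the alternating entries of $W_0$ together with the Leibniz sign; similarly $\Delta_J(\overline{X}) = \epsilon_J \Delta_{P_2, Q_2}(X)$. A direct inversion-count verifies that $\epsilon_I \epsilon_J = +1$, consistent with the signless form of the statement and with the identification $\Delta_{[1~2~5~7~11]}(\overline{X})\Delta_{[2~3~4~6~7]}(\overline{X}) = \Delta_{[1~2~5~7],[1~3~4~5]}(X)\Delta_{[2~3~4~6~7],[1~2~3~4~5]}(X)$ displayed in Example~\ref{Ex:prematch2}.

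Second, I would invoke the generalized Rhoades--Skandera expansion (discussed in the paragraph preceding the theorem, building on \cite{RSkanTLImmp}) for the product $\Delta_{P_1, Q_1}(X)\,\Delta_{P_2, Q_2}(X)$ regarded inside the generalized matrix $X_{M, M'}$ with $M = P_1 \Cup P_2$ and $M' = Q_1 \Cup Q_2$. This produces a $0$-$1$ combination of Temperley--Lieb immanants $\imm{K}(X_{M, M'})$ over $K \in \K_s$ with $s = |P_1| + |P_2|$, where the diagrams appearing with coefficient $1$ are characterized by an edge-color compatibility: each edge must join vertices attributable to opposite minors, unless it is forced by a repeated index appearing in $M$ or $M'$.

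The final step, and the main technical obstacle, is to identify this set of contributing diagrams with $\Phi(I,J)$. The scan over $i \in [1, n]$ in Definition~\ref{def:prmatch} encodes the row structure of $M$: $i \in I \setminus J$ gives a row only in $P_1$ (white vertex), $i \in J \setminus I$ gives a row only in $P_2$ (black), $i \in I \cap J$ yields a row appearing twice in $M$ (a forced edge at the corresponding pair of consecutive vertices), and $i \notin I \cup J$ contributes nothing. The scan over $i \in [n+1, m+n]$ encodes the column structure of $M'$ via the complementation convention built into \eqref{Plucker-embed}, under which the conditions ``in both $Q_1$ and $Q_2$'' and ``in neither'' are traded for ``$i \notin I \cup J$'' and ``$i \in I \cap J$'' respectively---this is precisely why the forced-edge and do-nothing cases exchange places in the two halves of Definition~\ref{def:prmatch}. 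Checking that these two four-case dictionaries reproduce exactly the Rhoades--Skandera compatibility constraints, and that $E(I, J)$ collects precisely the duplicated indices of $M$ and $M'$, completes the identification; combining the three steps then yields the claimed expansion.
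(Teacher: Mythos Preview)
The paper does not supply its own proof of this statement: Theorem~\ref{tm:plprod} is cited directly from Rhoades--Skandera \cite{RSkanTLImmp} and stated without argument, so there is no in-paper proof to compare against. Your proposal is therefore really a sketch of how to \emph{derive} the Pl\"ucker-coordinate formulation from the generalized-minor formulation of \cite{RSkanTLImmp}, and as such it is sound: the three steps (rewrite each $\Delta_I(\overline{X})$ as a minor of $X$, apply the Rhoades--Skandera $0$-$1$ expansion for the product $\Delta_{P_1,Q_1}(X)\Delta_{P_2,Q_2}(X)$ in the generalized matrix $X_{M,M'}$, and identify the contributing diagrams with $\Phi(I,J)$ via Definition~\ref{def:prmatch}) are exactly the translation one needs.

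One small simplification: you introduce signs $\epsilon_I,\epsilon_J$ and argue that $\epsilon_I\epsilon_J=+1$, but the paper already records in Section~\ref{TNNGrass} the exact identity $\det A_{P,Q}=\Delta_I(\overline{A})$ (no sign), since $W_0$ is chosen precisely so that all Laplace signs cancel. So your first step can be replaced by a direct citation of that identity, and the sign bookkeeping is unnecessary. The substantive content is entirely in your third step, where the swap of the ``forced edge'' and ``do nothing'' cases between the two halves of Definition~\ref{def:prmatch} is correctly explained by the column-complementation in the embedding.
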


\begin{thm}[Rhoades--Skandera \cite{RSkanTLImmp}]\label{tm:plcomb}
Let $p(x_{ij})$ be a polynomial of the form 
\begin{equation}
    \sum_{(I,J)}c_{I,J} \Delta_{I}(\overline{X})\Delta_{J} (\overline{X}), 
\end{equation}
where the sum is over pairs $(I,J)$ which satisfy $I\Cup J = M$ for a fixed multiset $M$. Then $p(x)$ is totally nonnegative if and only if there exist nonnegative constants $\{d_K~| K \in \Phi(I,J) \}$ such that 
\begin{equation}\label{eq:lincom}
    p(x)=  
\sum_{K}d_{K}\imm{K}(X_{P_1 \Cup P_2, Q_1 \Cup Q_2}).
\end{equation}
\end{thm}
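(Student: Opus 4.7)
The plan is to reduce Theorem~\ref{tm:plcomb} to the extreme-ray characterization of Theorem~\ref{SB} applied in the generalized matrix setting. Since the multiset $M = I \Cup J$ is fixed throughout the sum defining $p$, the row multiset $P_1 \Cup P_2$ and column multiset $Q_1 \Cup Q_2$ depend only on $M$, not on the particular pair $(I,J)$. Consequently, each expansion given by Theorem~\ref{tm:plprod} produces Temperley--Lieb immanants of one and the same generalized submatrix $Y \defeq X_{P_1 \Cup P_2,\, Q_1 \Cup Q_2}$. Expanding every product and regrouping yields
\[
p(x) \;=\; \sum_{(I,J)} c_{I,J} \sum_{K \in \Phi(I,J)} \imm{K}(Y) \;=\; \sum_{K} \alpha_{K}\,\imm{K}(Y)
\]
for some $\alpha_{K} \in \mathbb{R}$, where $K$ ranges over all basis elements of the Temperley--Lieb monoid of the appropriate order.

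For the ``if'' direction, assume $p(x) = \sum_{K} d_{K}\imm{K}(Y)$ with $d_{K} \geq 0$. Each $\imm{K}$ is a totally nonnegative polynomial in the entries of an ordinary matrix by Theorem~\ref{SB}, and the generalized submatrix $Y$ of a TNN matrix $X$ is a uniform limit of ordinary submatrices of slightly perturbed TNN matrices (split each repeated index in $M$ after smoothing $X$ by the Gaussian kernel as in the proof of Theorem~\ref{Ineq-equivalence}). Continuity then forces $\imm{K}(Y) \geq 0$, and the nonnegative combination inherits nonnegativity.

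For the ``only if'' direction, I would invoke the generalized-matrix analogue of Theorem~\ref{SB}, which asserts that the immanants $\{\imm{K}(Y)\}$ are linearly independent, span the entire space of complementary products of minors of $Y$, and form the extreme rays of the TNN cone therein. Since the expansion above places $p$ in this span, the hypothesis that $p$ is TNN on all TNN matrices $X$ transfers to TNN of $\sum_{K}\alpha_{K}\imm{K}(Y)$ on the generic generalized matrix $Y$, and the extreme-ray property forces $\alpha_{K} \geq 0$. Setting $d_{K} \defeq \alpha_{K}$ completes the proof.

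The main technical obstacle is the generalized-matrix form of Theorem~\ref{SB} invoked above: that TL immanants continue to characterize the extreme rays when $Y$ is allowed to have repeated rows or columns. The natural strategy is a density/limit argument, perturbing the indices in $M$ to nearby distinct values so as to apply the ordinary Theorem~\ref{SB}, and then passing to the limit by using the polynomial dependence of the immanants on the matrix entries together with the combinatorial stability of the diagrammatic parameterization of the $\imm{K}$. Alternatively, one may directly cite the proof of Rhoades--Skandera in \cite{RSkanTLImmp}, which was already carried out at this level of generality.
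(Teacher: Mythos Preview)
The paper does not supply its own proof of Theorem~\ref{tm:plcomb}; it is stated as a result of Rhoades--Skandera \cite{RSkanTLImmp} and used as a black box. So there is no proof in the paper to compare against, and your final suggestion---to cite \cite{RSkanTLImmp} directly---is precisely what the authors do.

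That said, a few comments on your sketch. Your ``if'' direction is over-engineered: if $X$ is TNN then the generalized submatrix $Y = X_{P_1 \Cup P_2,\, Q_1 \Cup Q_2}$ is itself TNN (any minor of $Y$ either repeats a row or column, hence vanishes, or coincides with a minor of $X$), so $\imm{K}(Y) \geq 0$ follows immediately from the ordinary nonnegativity of TL immanants without any limiting argument. For the ``only if'' direction you correctly isolate the real issue, namely that the extreme-ray characterization must be established for generalized submatrices. Your proposed density argument is plausible in spirit but has a genuine subtlety you did not address: when the multiset $M$ has repetitions, the matrix $Y$ is constrained to have equal rows or columns, so as $X$ ranges over TNN matrices $Y$ does \emph{not} range over all TNN matrices of its size. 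One therefore cannot simply transport the conclusion of Theorem~\ref{SB} from generic $Y$; the extreme-ray property must be argued on the constrained locus, which is exactly the content of the generalized result in \cite{RSkanTLImmp}. Your sketch recognizes this obstacle but does not resolve it, so as a standalone proof it is incomplete---though as a reduction to the cited literature it is accurate.
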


Based on the discussion in this subsection, as and when applicable, it would be understandable if we drop $(A)$ (or $(X)$) from $\Delta_{I}(A)$ (respectively from $\Delta_{I}(X)$), as the following discussions will be based on the indices in the inequalities. The same is applied for $\Delta_{P,Q}(A).$ Similarly, we shall use $\imm{K}$ instead of $\imm{K}(x)$ from here on.

\section{Generalized Laplace inequalities}\label{Sec:GenLapIneq}
In this section we discuss Theorem~\ref{FVthm} by restating it in terms of Pl\"ucker coordinates, and discuss its connection with long Pl\"ucker relations. Such a connection suggests a natural refinement of the inequalities, which we prove via the nonnegativity of Temperley--Lieb immanants. We consider the following:

\begin{example}\label{ex:2}
Taking $n=7$ and $d=4$ in Theorem~\ref{FVthm} yields the following set of inequalities: 
\begin{enumerate}
\item $-\Delta_{[1234],[4567]} \Delta_{[567],[123]}+\Delta_{[1234],[3567]} \Delta_{[567],[124]} \geq 0.$
\vspace*{2mm}

\item $\Delta_{[1234],[4567]} \Delta_{[567],[123]}-\Delta_{[1234],[3567]} \Delta_{[567],[124]}+\Delta_{[1234],[3467]} \Delta_{[567],[125]} \geq 0.$       
\vspace*{2mm}

\item $-\Delta_{[1234],[4567]} \Delta_{[567],[123]}+\Delta_{[1234],[3567]} \Delta_{[567],[124]}-\Delta_{[1234],[3467]} \Delta_{[567],[125]}$ \\ $+\Delta_{[1234],[3457]} \Delta_{[567],[126]} \geq 0.$
\vspace*{2mm}

\item $\Delta_{[1234],[4567]} \Delta_{[567],[123]}-\Delta_{[1234],[3567]} \Delta_{[567],[124]}+\Delta_{[1234],[3467]} \Delta_{[567],[125]} $\\$-\Delta_{[1234],[3457]} \Delta_{[567],[126]} +\Delta_{[1234],[3456]} \Delta_{[567],[127]} \geq 0.$
\end{enumerate}
\end{example}

\noindent Recall: $\Delta_{P,Q}=\Delta_{I,[m]}=\Delta_{I},$ where $I={P\cup \{m+n+1-j \,|\, j \in [m] \setminus Q\}}.$ We can restate Theorem \ref{FVthm} in terms of Pl\"ucker coordinates of $2n\times n$ matrix representatives of elements of $\Gr^{\geq 0}(n,2n)$ as follows:

\begin{thm}\label{th:5.5pl}
Let $1\leq d < n$ be integers. Suppose $I({d,k}) \defeq [1,d] \cup [n+d+2,2n] \cup \{n+d+1-k\},$ for $k \in [0,d].$ Then
\begin{equation}\label{eq:5.5pl}
    (-1)^{l}\sum^{l}_{k=0}(-1)^{k} \Delta_{I({d,k})}(\overline{A})\Delta_{I({d,k})^c}(\overline{A}) \geq 0, \quad \forall l\in [0,d],~A_{n\times n}~TNN. 
\end{equation}
\end{thm}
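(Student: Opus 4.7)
\begin{proofidea}
The plan is to show that Theorem~\ref{th:5.5pl} is nothing more than the translation of Theorem~\ref{FVthm} through the Pl\"ucker embedding \eqref{Plucker-embed} and the explicit index correspondence established immediately after it. Because $A$ is $n\times n$ here, the embedding places $A$ into a $2n\times n$ representative $\overline{A}$ of an element of $\Gr^{\geq 0}(n,2n)$, and the Pl\"ucker coordinates $\Delta_I(\overline{A})$ correspond bijectively (via the recipe recorded just after \eqref{Plucker-embed}) to minors $\det A_{P,Q}$ of $A$.

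Concretely, I would first split $I(d,k) = [1,d] \cup \{n+d+1-k\} \cup [n+d+2, 2n]$ into its lower and upper halves. Since $n+d+1-k \in [n+1,n+d+1]$ for $k\in [0,d]$, we have $I(d,k)\cap [n] = [1,d] = P_d$. Applying the map $j\mapsto 2n+1-j$ to $I(d,k)\cap [n+1,2n] = [n+d+2,2n] \cup \{n+d+1-k\}$ yields $[1,n-d-1]\cup\{n-d+k\}$, whose complement in $[n]$ is precisely $[n-d,n]\setminus\{n-d+k\} = Q_{dk}$. Hence
\begin{equation*}
\Delta_{I(d,k)}(\overline{A}) = \det A_{P_d,Q_{dk}}.
\end{equation*}

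Next I would perform the analogous computation for the complement $I(d,k)^c = [d+1,n] \cup \bigl([n+1,n+d+1]\setminus\{n+d+1-k\}\bigr)$: its intersection with $[n]$ is $[n]\setminus P_d$, and the image of $I(d,k)^c\cap [n+1,2n]$ under $j\mapsto 2n+1-j$ is $[n-d,n]\setminus\{n-d+k\} = Q_{dk}$, so that the ``column complement'' read off by the recipe becomes $[n]\setminus Q_{dk}$. Thus
\begin{equation*}
\Delta_{I(d,k)^c}(\overline{A}) = \det A_{[n]\setminus P_d, [n]\setminus Q_{dk}}.
\end{equation*}

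Combining the two identities above, the left-hand side of \eqref{eq:5.5pl} equals
\begin{equation*}
\sum_{k=0}^{l}(-1)^{l+k}\det A_{P_d,Q_{dk}} \det A_{[n]\setminus P_d, [n]\setminus Q_{dk}},
\end{equation*}
which is $\geq 0$ for every $n\times n$ TNN matrix $A$ by Theorem~\ref{FVthm}. There is no real obstacle beyond carefully tracking the index bookkeeping; the only place where one might slip is in checking that the inserted element $n+d+1-k$ always lies in $[n+1,n+d+1]$ (so that the partition of $I(d,k)$ into its two halves is as claimed) and that the reflection $j\mapsto 2n+1-j$ sends $\{n+d+1-k\}$ to $\{n-d+k\}$. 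Both are routine.
\end{proofidea}
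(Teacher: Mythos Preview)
Your proposal is correct and matches the paper's approach: the paper introduces Theorem~\ref{th:5.5pl} explicitly as a restatement of Theorem~\ref{FVthm} in Pl\"ucker coordinates via the embedding \eqref{Plucker-embed}, without spelling out the index bookkeeping. You have simply filled in those details accurately, so there is nothing to add.
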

\noindent One can see that the expression on the left side of the inequality \eqref{eq:5.5pl} is a partial sum of the long Pl\"ucker relation. Indeed, if we extend the definition of $I_{d,l}$ for $l \in [0,n]$ then 
\begin{equation}\label{eq:pl}
 \sum^{n}_{k=0}(-1)^{n+k} \Delta_{I({d,l})}(\overline{A})\Delta_{I({d,l})^c}(\overline{A}) = 0.
\end{equation}
\noindent Therefore, it is natural to extend the inequalities in \eqref{eq:5.5pl} to all partial sums of the corresponding long Pl\"ucker relation. We achieve this by extending the range of the parameter $l,$ now $l \in [0,n].$

\begin{thm}\label{th:refined}
The notations remain as in Theorem~\ref{th:5.5pl} with $l \in [0,n].$ Then
\begin{equation}\label{eq:5.5refined}
(-1)^{l}\sum^{l}_{k=0}(-1)^{k} \Delta_{I({d,k})}(\overline{A})\Delta_{I({d,k})^c}(\overline{A}) \geq 0, \quad \forall l\in [0,n],~A_{n\times n}~TNN.
\end{equation}
\end{thm}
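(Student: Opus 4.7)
\begin{proofidea}
The plan is to expand each product $\Delta_{I(d,k)}(\overline{A})\Delta_{I(d,k)^c}(\overline{A})$ as a sum of Temperley--Lieb immanants via Theorem~\ref{t:oneprod}, and then show that the coefficients in the resulting alternating partial sum all lie in $\{0,1\}$. Since each $\imm{K}$ is totally nonnegative (see the discussion preceding Theorem~\ref{SB}), this yields \eqref{eq:5.5refined} immediately.

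Set $W:=[1,d]\cup[n+d+2,2n]$ and $V:=[d+1,n+d+1]$, so that $W\sqcup V=[2n]$, $|W|=n-1$ and $|V|=n+1$. For every $k\in[0,n]$, the coloring dictated by $I(d,k)$ places all of $W$ in $I(d,k)$ (``white'') while in $V$ exactly one vertex, namely $n+d+1-k$, is white and the remaining $n$ are black. Theorem~\ref{t:oneprod} gives
\[
\Delta_{I(d,k)}(\overline{A})\Delta_{I(d,k)^c}(\overline{A})=\sum_{K\in\K_n}b_K^{(k)}\,\imm{K},
\]
where $b_K^{(k)}\in\{0,1\}$ equals $1$ iff every edge of $K$ is bi-colored.

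The key combinatorial claim to establish is: for each $K\in\K_n$, the set $\{k\in[0,n]:b_K^{(k)}=1\}$ is either empty or a pair of consecutive integers. I justify this by eliminating configurations. First, $K$ cannot contain any edge inside $W$, since such an edge would join two always-white vertices. Second, because $|V|-|W|=2$, $K$ must contain at least one edge inside $V$; and it can contain at most one such edge, because every VV edge needs the unique white $V$-vertex as an endpoint. Third, using that $K$ is noncrossing and that $V$ is an interval in $[2n]$, the unique VV edge $(a,b)$ with $a<b$ must satisfy $b=a+1$: otherwise the vertices $\{a+1,\ldots,b-1\}\subseteq V$ would, by noncrossingness, be matched among themselves and produce further VV edges, a contradiction. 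With the VV edge equal to $(a,a+1)$, compatibility reduces to $n+d+1-k\in\{a,a+1\}$, i.e.\ $k\in\{k_0,k_0+1\}$ with $k_0:=n+d-a$.

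Given the claim, for each compatible $K$ the alternating partial sum $(-1)^{l}\sum_{k=0}^{l}(-1)^{k}b_K^{(k)}$ equals $0$ when $l<k_0$, equals $(-1)^{2k_0}=1$ when $l=k_0$, and equals $(-1)^{l}\bigl((-1)^{k_0}+(-1)^{k_0+1}\bigr)=0$ when $l\geq k_0+1$. Hence each $\imm{K}$ appears with coefficient in $\{0,1\}$ in the full alternating partial sum, making the latter a nonnegative integer combination of Temperley--Lieb immanants, which is totally nonnegative. I expect the noncrossing ``short edge'' step (forcing $b=a+1$) to be the main point requiring care; the rest is routine bookkeeping.
\end{proofidea}
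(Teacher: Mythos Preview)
Your proof is correct and follows essentially the same approach as the paper: expand each product via Theorem~\ref{t:oneprod} and exploit the telescoping structure coming from the fact that compatible Kauffman diagrams are shared by consecutive $k$'s. The only difference is cosmetic---the paper first applies a cyclic shift and reflection (Proposition~\ref{pr:sym}) to normalize $I(d,k)$ to $[1,n-1]\cup\{n+k\}$ and then explicitly enumerates the one or two compatible diagrams for each $k$, whereas you work directly with the original sets and phrase the key fact dually (for each $K$, the set of compatible $k$'s is a consecutive pair); your formulation is arguably cleaner since it bypasses the symmetry reduction and the explicit listing of diagrams.
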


\noindent We prove Theorem~\ref{th:refined} after the next supplementary proposition. 

Kauffman diagrams can be viewed as a collection of $n$ noncrossing chords connecting $2n$ vertices located on the circle labeled with integers $1,2,\dots,2n.$ Therefore, cyclic shifts and reflections map Kauffman diagrams into Kauffman diagrams. This observation leads to the following:

\begin{prop}\label{pr:sym}
For an index set $\alpha,$ we define a cyclic shift
$\sigma(\alpha) = \{ i+1 \mod{2n}\, | \, i\in \alpha\}$ and a reflection $\rho(\alpha) = \{ (2n+1) -i \, | \, i \in \alpha \}.$ 
Then each of the inequalities below holds for all $A_{n\times n}~TNN$ if and only if the others do so:
\begin{align*}
\sumsb{I \subseteq [2n]\\ |I|=n} c_{I}
\Delta_{I}(\overline{A})\Delta_{I^c}(\overline{A}) \geq 0 \Leftrightarrow  \sumsb{I \subseteq [2n]\\ |I|=n} c_{I}
\Delta_{\sigma(I)}(\overline{A})\Delta_{\sigma(I)^c}(\overline{A})  \geq 0
\Leftrightarrow \sumsb{I \subseteq [2n]\\ |I|=n} c_{I}
\Delta_{\rho(I)}(\overline{A})\Delta_{\rho(I)^c}(\overline{A})  \geq 0.
\end{align*}
\end{prop}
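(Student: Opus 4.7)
The plan is to expand each of the three polynomials into the Temperley--Lieb immanant basis via Theorem~\ref{t:oneprod}, and then to exploit the geometric symmetries of Kauffman diagrams. Specifically, for every $n$-element subset $I\subseteq [2n]$, Theorem~\ref{t:oneprod} gives
\[
\Delta_{I}(\overline{A})\Delta_{I^c}(\overline{A}) \;=\; \sum_{K\in\K_n} b_{I,K}\,\imm{K},
\]
where $b_{I,K}=1$ precisely when every edge of the Kauffman diagram $K$ has one endpoint in $I$ and the other in $I^c$, and $b_{I,K}=0$ otherwise. Summing against $c_I$, each of the three displayed polynomials in Proposition~\ref{pr:sym} becomes an explicit linear combination of Temperley--Lieb immanants, and by Theorem~\ref{SB} the polynomial is nonnegative on all TNN $A$ if and only if every coefficient in this expansion is nonnegative.

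The key observation is that elements of $\K_n$ are noncrossing chord diagrams on $2n$ cyclically arranged vertices, so the cyclic shift $\sigma$ and the reflection $\rho$ of $[2n]$ extend to bijections $\sigma,\rho\colon\K_n\to\K_n$. Moreover the compatibility predicate is covariant under these actions: $K$ is compatible with $\sigma(I)$ if and only if $\sigma^{-1}(K)$ is compatible with $I$ (and similarly for $\rho$), since rotating or reflecting the whole circular picture simultaneously permutes both $I\mapsto\sigma(I)$ and the endpoints of the chords of $K$. In formulas, $b_{\sigma(I),K}=b_{I,\sigma^{-1}(K)}$ and $b_{\rho(I),K}=b_{I,\rho^{-1}(K)}$.

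Using this, the coefficient of $\imm{K}$ in the $\sigma$-shifted sum is
\[
\sum_{I}c_I\,b_{\sigma(I),K} \;=\; \sum_{I}c_I\,b_{I,\sigma^{-1}(K)},
\]
which is exactly the coefficient of $\imm{\sigma^{-1}(K)}$ in the original sum. Hence the multiset of Temperley--Lieb coefficients of the $\sigma$-shifted polynomial is just a relabeling of that of the original polynomial by the bijection $\sigma$ on $\K_n$, so one family is entrywise nonnegative precisely when the other is. The same argument with $\rho$ in place of $\sigma$ settles the reflection case, and a second appeal to Theorem~\ref{SB} converts these equivalences of coefficient nonnegativities into the desired equivalence of inequalities over all TNN $A$.

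The main obstacle is the covariance formula $b_{\sigma(I),K}=b_{I,\sigma^{-1}(K)}$ (and its $\rho$ analogue), but this is really a clean bookkeeping check: a Kauffman diagram is a dihedrally invariant combinatorial object, so rotating the vertex labels has the same effect as rotating the diagram itself, and the condition ``every edge of $K$ joins a vertex in $I$ to one in $I^c$'' is purely local at each edge and carries over intact under relabeling. Once this step is in hand, the remainder of the argument is entirely formal.
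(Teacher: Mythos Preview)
Your proof is correct and follows essentially the same approach as the paper: expand each product $\Delta_I\Delta_{I^c}$ via Theorem~\ref{t:oneprod}, use the dihedral action of $\sigma,\rho$ on Kauffman diagrams to identify the coefficient multisets up to relabeling, and conclude via Theorem~\ref{SB}. The paper's version is more telegraphic (it simply records the observation $\Delta_{\sigma(I)}\Delta_{\sigma(I)^c}=\sum_i b_i\,\imm{\sigma(K_i)}$), while you spell out the covariance identity $b_{\sigma(I),K}=b_{I,\sigma^{-1}(K)}$ explicitly, but the underlying argument is the same.
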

\begin{proof}
The proof follows from Theorems \ref{SB}, \ref{t:oneprod}, and the observation that
\begin{align*}
\Delta_{I}\Delta_{I^c} =\sum_{i}b_{i}\imm{K_i} \quad\Leftrightarrow\quad \Delta_{\sigma(I)}\Delta_{\sigma(I)^c} = \sum_{i}b_{i}\imm{\sigma(K_i)}, \text{~and~}\Delta_{\rho(I)}\Delta_{\rho(I)^c} =\sum_{i}b_{i}\imm{\rho(K_i)}.
\end{align*}
Here $\sigma(K_i)$ and $\rho(K_i)$ are the images of the Kauffman diagram $K_i$ under cyclic shift $\sigma$ and reflection $\rho,$ respectively.
\end{proof}

\begin{remark}
With the discussion in Section~\ref{Sec:Pre}, it is easy to see that Proposition~\ref{pr:sym} also applies for inequalities with noncomplementary minors, as long as the inequality satisfies the homogeneity condition \eqref{eq:space}. 
\end{remark}

\begin{example}\label{ex:3}
We demonstrate Proposition~\ref{pr:sym} via an example. Recall Example~\ref{ex:1} and Figure~\ref{eq:ex1}:
\begin{align*}
\Delta_{[124]}\Delta_{[356]} - \Delta_{[123]}\Delta_{[456]}  = \big{(}\imm{K_1}+\imm{K_2}\big{)} - \imm{K_1}  = \imm{K_2}\geq 0.
\end{align*}
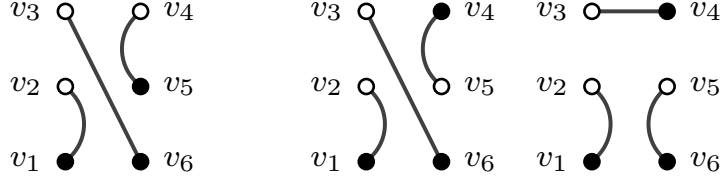
\begin{figure}
\begin{center}
\begin{tikzpicture}
\Vertex[x=0,y=0,size=0.1,color=black,position=right,fontscale=1.5,label=$v_6$]{6}
\Vertex[x=0,y=1,size=0.1,color=black,position=right,fontscale=1.5,label=$v_5$]{5}
\Vertex[x=0,y=2,size=0.1,color=white,position=right,fontscale=1.5,label=$v_4$]{4}

\Vertex[x=-1,y=2,size=0.1,color=white,position=left,fontscale=1.5,label=$v_3$]{3}
\Vertex[x=-1,y=1,size=0.1,color=white,position=left,fontscale=1.5,label=$v_2$]{2}
\Vertex[x=-1,y=0,size=0.1,color=black,position=left,fontscale=1.5,label=$v_1$]{1}

\Edge(3)(6) \Edge[bend=-45](1)(2) \Edge[bend=45](5)(4)

\Vertex[x=4,y=0,size=0.1,color=black,position=right,fontscale=1.5,label=$v_6$]{66}
\Vertex[x=4,y=1,size=0.1,color=white,position=right,fontscale=1.5,label=$v_5$]{55}
\Vertex[x=4,y=2,size=0.1,color=black,position=right,fontscale=1.5,label=$v_4$]{44}

\Vertex[x=3,y=2,size=0.1,color=white,position=left,fontscale=1.5,label=$v_3$]{33}
\Vertex[x=3,y=1,size=0.1,color=white,position=left,fontscale=1.5,label=$v_2$]{22}
\Vertex[x=3,y=0,size=0.1,color=black,position=left,fontscale=1.5,label=$v_1$]{11}
 
\Edge(33)(66) \Edge[bend=-45](11)(22) \Edge[bend=45](55)(44)

\Vertex[x=7,y=0,size=0.1,color=black,position=right,fontscale=1.5,label=$v_6$]{666}
\Vertex[x=7,y=1,size=0.1,color=white,position=right,fontscale=1.5,label=$v_5$]{555}
\Vertex[x=7,y=2,size=0.1,color=black,position=right,fontscale=1.5,label=$v_4$]{444}

\Vertex[x=6,y=2,size=0.1,color=white,position=left,fontscale=1.5,label=$v_3$]{333}
\Vertex[x=6,y=1,size=0.1,color=white,position=left,fontscale=1.5,label=$v_2$]{222}
\Vertex[x=6,y=0,size=0.1,color=black,position=left,fontscale=1.5,label=$v_1$]{111}
 
\Edge[bend=-45](111)(222) \Edge(444)(333) \Edge[bend=-45](555)(666) 
\end{tikzpicture}
\hspace*{-2cm}\caption{Kauffman diagrams $K_1',$ $K_1',$ and $ K_2',$ respectively, as referred to in Example~\ref{ex:3}.}\label{eq:ex11}
\end{center}
\end{figure}
Now compute the following using Kauffman diagrams: 
\begin{align*}
\Delta_{\sigma([124])}\Delta_{\sigma([356])} - \Delta_{\sigma([123])}\Delta_{\sigma([456])}&=\Delta_{[235]}\Delta_{[146]} - \Delta_{[234]}\Delta_{[156]}\\
&= \big{(}\imm{K'_1}+\imm{K'_2}\big{)} - \imm{K'_1}= \imm{K'_2}\geq 0.
\end{align*}
One can observe that the bi-colored noncrossing matching $K'_1$ is an image of $K_1$ under the same map $\sigma,$ and the same is true for $K'_2$ and $K_2.$ See Figure~\ref{eq:ex11} and compare with Figure~\ref{eq:ex1}.
\end{example}
Let us now apply Proposition~\ref{pr:sym} and simplify Theorem~\ref{th:refined} and prove it next:
\begin{proof}[Proof of Theorem~\ref{th:refined}] 
First we apply the cyclic shift $\sigma^{2n-d}$ and then the reflection $\rho$ on \eqref{eq:5.5refined}, to get:
\begin{equation}\label{eq:5.5refinedshifted}
 (-1)^{l}\sum^{l}_{k=0}(-1)^{k} \Delta_{[1,n-1]\cup\{n+k\}}\Delta_{[n,2n]\setminus \{n+k\}} \geq 0, \quad \mbox{for }l \in [1,n].
\end{equation}
It is sufficient to prove above to conclude this proof. Recall that for each product of minors in \eqref{eq:5.5refinedshifted} we can find all $K \in \K_n$ such that each edge of $K$ connects an element from $I=[1,n-1]\cup\{n+k\}$ to $I^c.$ For $k=0,$ we have $\Delta_{[1,n-1]\cup\{n+k\}}\Delta_{[n,2n]\setminus \{n+k\}} = \Delta_{[1,n]}\Delta_{[n+1,2n]}.$ Since $I$ consists of $n$ consecutive elements, there is exactly one $K \in \K_n$ such that each edge of $K$ connects an element from $I$ to $I^c.$ Indeed, the element $v_1$ can be connected to one of the elements on the interval $[v_{n+1}...v_{2n}],$ let us assume that $v_1$ is connected to the element $v_{2n+1-i},$ for $i\in [1,n].$ 
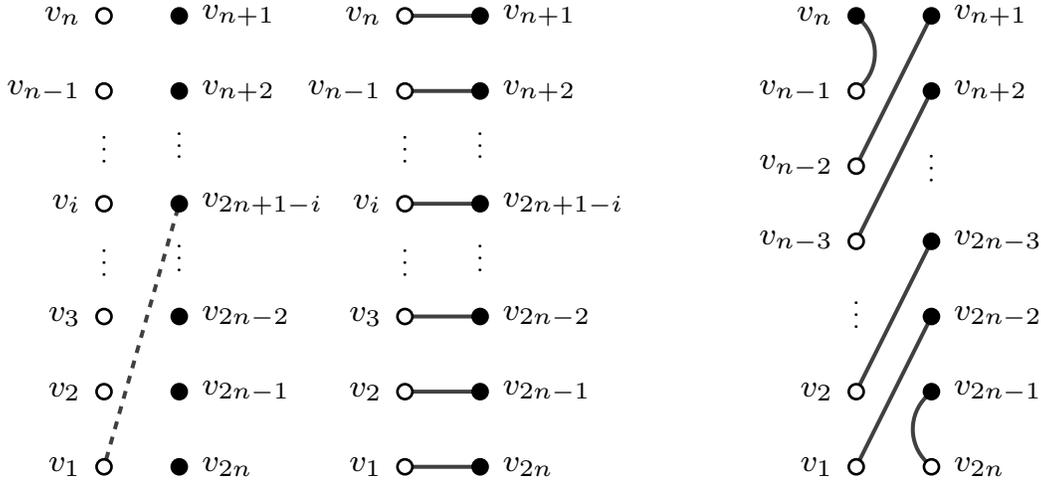
\begin{figure}
\begin{center}
\begin{tikzpicture}
\Vertex[x=1,y=0,size=0.1,color=black,position=right,fontscale=1.5,label=$v_{2n}$]{10}
\Vertex[x=1,y=1,size=0.1,color=black,position=right,fontscale=1.5,label=$v_{2n-1}$]{9}
\Vertex[x=1,y=2,size=0.1,color=black,position=right,fontscale=1.5,label=$v_{2n-2}$]{8}
\Vertex[x=1,y=3.5,size=0.1,color=black,position=right,fontscale=1.5,label=$v_{2n+1-i}$]{7}
\Vertex[x=1,y=5,size=0.1,color=black,position=right,fontscale=1.5,label=$v_{n+2}$]{60}
\Vertex[x=1,y=6,size=0.1,color=black,position=right,fontscale=1.5,label=$v_{n+1}$]{6}

\Vertex[x=0,y=6,size=0.1,color=white,position=left,fontscale=1.5,label=$v_n$]{5}
\Vertex[x=0,y=5,size=0.1,color=white,position=left,fontscale=1.5,label=$v_{n-1}$]{50}
\Vertex[x=0,y=3.5,size=0.1,color=white,position=left,fontscale=1.5,label=$v_i$]{4}

\Vertex[x=0,y=2,size=0.1,color=white,position=left,fontscale=1.5,label=$v_3$]{3}
\Vertex[x=0,y=1,size=0.1,color=white,position=left,fontscale=1.5,label=$v_2$]{2}
\Vertex[x=0,y=0,size=0.1,color=white,position=left,fontscale=1.5,label=$v_1$]{1}

\Edge[style=dashed](1)(7)

\path (3) -- (4) node [black, font=\small, midway, sloped] {$\dots$};
\path (4) -- (50) node [black, font=\small, midway, sloped] {$\dots$};
\path (7) -- (8) node [black, font=\small, midway, sloped] {$\dots$};
\path (60) -- (7) node [black, font=\small, midway, sloped] {$\dots$};

\Vertex[x=5,y=0,size=0.1,color=black,position=right,fontscale=1.5,label=$v_{2n}$]{1010}
\Vertex[x=5,y=1,size=0.1,color=black,position=right,fontscale=1.5,label=$v_{2n-1}$]{99}
\Vertex[x=5,y=2,size=0.1,color=black,position=right,fontscale=1.5,label=$v_{2n-2}$]{88}
\Vertex[x=5,y=3.5,size=0.1,color=black,position=right,fontscale=1.5,label=$v_{2n+1-i}$]{77}
\Vertex[x=5,y=5,size=0.1,color=black,position=right,fontscale=1.5,label=$v_{n+2}$]{66}
\Vertex[x=5,y=6,size=0.1,color=black,position=right,fontscale=1.5,label=$v_{n+1}$]{660}

\Vertex[x=4,y=6,size=0.1,color=white,position=left,fontscale=1.5,label=$v_n$]{550}
\Vertex[x=4,y=5,size=0.1,color=white,position=left,fontscale=1.5,label=$v_{n-1}$]{55}
\Vertex[x=4,y=3.5,size=0.1,color=white,position=left,fontscale=1.5,label=$v_i$]{44}
\Vertex[x=4,y=2,size=0.1,color=white,position=left,fontscale=1.5,label=$v_3$]{33}
\Vertex[x=4,y=1,size=0.1,color=white,position=left,fontscale=1.5,label=$v_2$]{22}
\Vertex[x=4,y=0,size=0.1,color=white,position=left,fontscale=1.5,label=$v_1$]{11}

\path (33) -- (44) node [black, font=\small, midway, sloped] {$\dots$};
\path (44) -- (55) node [black, font=\small, midway, sloped] {$\dots$};
\path (77) -- (88) node [black, font=\small, midway, sloped] {$\dots$};
\path (66) -- (77) node [black, font=\small, midway, sloped] {$\dots$};

\Edge(11)(1010) \Edge(22)(99) \Edge(33)(88) \Edge(44)(77) \Edge(55)(66) \Edge(550)(660)


\Vertex[x=11,y=0,size=0.1,color=white,position=right,fontscale=1.5,label=$v_{2n}$]{121212}
\Vertex[x=11,y=1,size=0.1,color=black,position=right,fontscale=1.5,label=$v_{2n-1}$]{111111}
\Vertex[x=11,y=2,size=0.1,color=black,position=right,fontscale=1.5,label=$v_{2n-2}$]{101010}
\Vertex[x=11,y=3,size=0.1,color=black,position=right,fontscale=1.5,label=$v_{2n-3}$]{999}
\Vertex[x=11,y=5,size=0.1,color=black,position=right,fontscale=1.5,label=$v_{n+2}$]{888}
\Vertex[x=11,y=6,size=0.1,color=black,position=right,fontscale=1.5,label=$v_{n+1}$]{777}

\Vertex[x=10,y=6,size=0.1,color=black,position=left,fontscale=1.5,label=$v_n$]{666}
\Vertex[x=10,y=5,size=0.1,color=white,position=left,fontscale=1.5,label=$v_{n-1}$]{555}
\Vertex[x=10,y=4,size=0.1,color=white,position=left,fontscale=1.5,label=$v_{n-2}$]{444}
\Vertex[x=10,y=3,size=0.1,color=white,position=left,fontscale=1.5,label=$v_{n-3}$]{333}
\Vertex[x=10,y=1,size=0.1,color=white,position=left,fontscale=1.5,label=$v_2$]{222}
\Vertex[x=10,y=0,size=0.1,color=white,position=left,fontscale=1.5,label=$v_1$]{111}

\path (333) -- (222) node [black, font=\small, midway, sloped] {$\dots$};
\path (999) -- (888) node [black, font=\small, midway, sloped] {$\dots$};

\Edge(111)(101010) \Edge(222)(999) \Edge(333)(888) \Edge(444)(777) \Edge[bend=-45](555)(666) \Edge[bend=-45](111111)(121212)

\end{tikzpicture}\caption{Unique bi-colored matching in the trivial case: see Theorem~\ref{th:refined}. On the extreme right, we have a rotated trivial diagram.}\label{d:tr0}
\end{center}
\end{figure}
Then the elements $v_{2n+2-i},\dots,v_{2n}$ (a subset of $I^c$) must be paired. Thus, the only possible option is that the set $\{v_{2n+2-i},\dots,v_{2n}\}$ is empty. This implies $v_{2n+1-i}=v_{2n}.$ By a similar argument we obtain that the remaining edges of $K$ connect $v_i$ and $v_{2n+1-i}$ for $i \in [2,n].$ We call a diagram in which $I$ and $I^c$ form two consecutive intervals a \emph{trivial diagram}, see Figure~\ref{d:tr0}. We observe that there are $n$ trivial diagrams in $\K_n$ which are obtained from rotations of the diagram in Figure~\ref{d:tr0}, i.e., obtained from the trivial diagram in Figure~\ref{d:tr0} by rotations.



Next, if $k=n$ then $\Delta_{[1,n-1]\cup\{n+k\}}\Delta_{[n,2n]\setminus \{n+k\}} = \Delta_{[1,n-1]\cup\{2n\}}\Delta_{[n,2n-1]}.$ Note that $[1,n-1]\cup \{2n\}$ and $[n,2n-1]$ form two consecutive intervals$\mod 2n.$ Thus, there is exactly one $K \in \K_n$ such that each edge of $K$ connects an element from $I$ to $I^c$, which is another trivial diagram.



Finally, if $0<k<n$ then for each product $\Delta_{[1,n-1]\cup\{n+k\}}\Delta_{[n,2n]\setminus \{n+k\}}$ there are two $K, K' \in \K_n$ such that each edge connects an element from $I$ and $I^c.$ See Figure~\ref{d:tr2}. Indeed, the element $v_{n+k} \in I$ could be connected only to an element $v_{i} \in \{ v_n~ \dots ~v_{2n}\} \setminus \{v_{n+k}\}.$ If $v_{i}$ is not one of its immediate neighbors $v_{n+k-1}$ (or $v_{n+k+1})$ then a non-empty interval $\{v_i,\ldots,v_{n+k}\}$ (or $\{v_{n+k},\ldots,v_i\}$ if $n+k<i$) is a subset of $I^c$ and separated by an edge from elements of $I.$ This is not possible for any $K \in \K_n.$ Thus, $v_{n+k}$ is connected with one of its immediate neighbors, either $v_{n+k-1}$ or $v_{n+k+1}.$ 

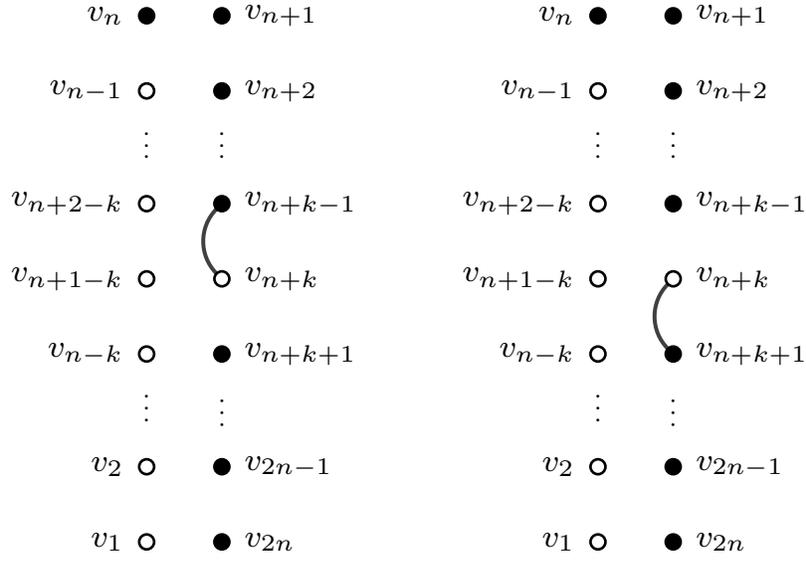
\begin{figure}
\begin{center}
\begin{tikzpicture}
\Vertex[x=1,y=0,size=0.1,color=black,position=right,fontscale=1.5,label=$v_{2n}$]{14}
\Vertex[x=1,y=1,size=0.1,color=black,position=right,fontscale=1.5,label=$v_{2n-1}$]{13}
\Vertex[x=1,y=2.5,size=0.1,color=black,position=right,fontscale=1.5,label=$v_{n+k+1}$]{12}
\Vertex[x=1,y=3.5,size=0.1,color=white,position=right,fontscale=1.5,label=$v_{n+k}$]{11}
\Vertex[x=1,y=4.5,size=0.1,color=black,position=right,fontscale=1.5,label=$v_{n+k-1}$]{10}
\Vertex[x=1,y=6,size=0.1,color=black,position=right,fontscale=1.5,label=$v_{n+2}$]{9}
\Vertex[x=1,y=7,size=0.1,color=black,position=right,fontscale=1.5,label=$v_{n+1}$]{8}

\Vertex[x=0,y=7,size=0.1,color=black,position=left,fontscale=1.5,label=$v_n$]{7}
\Vertex[x=0,y=6,size=0.1,color=white,position=left,fontscale=1.5,label=$v_{n-1}$]{6}
\Vertex[x=0,y=4.5,size=0.1,color=white,position=left,fontscale=1.5,label=$v_{n+2-k}$]{5}
\Vertex[x=0,y=3.5,size=0.1,color=white,position=left,fontscale=1.5,label=$v_{n+1-k}$]{4}
\Vertex[x=0,y=2.5,size=0.1,color=white,position=left,fontscale=1.5,label=$v_{n-k}$]{3}
\Vertex[x=0,y=1,size=0.1,color=white,position=left,fontscale=1.5,label=$v_2$]{2}
\Vertex[x=0,y=0,size=0.1,color=white,position=left,fontscale=1.5,label=$v_1$]{1}

\Edge[bend=45](11)(10)
\path (3) -- (2) node [black, font=\small, midway, sloped] {$\dots$};
\path (6) -- (5) node [black, font=\small, midway, sloped] {$\dots$};
\path (9) -- (10) node [black, font=\small, midway, sloped] {$\dots$};
\path (13) -- (12) node [black, font=\small, midway, sloped] {$\dots$};

\Vertex[x=7,y=0,size=0.1,color=black,position=right,fontscale=1.5,label=$v_{2n}$]{1414}
\Vertex[x=7,y=1,size=0.1,color=black,position=right,fontscale=1.5,label=$v_{2n-1}$]{1313}
\Vertex[x=7,y=2.5,size=0.1,color=black,position=right,fontscale=1.5,label=$v_{n+k+1}$]{1212}
\Vertex[x=7,y=3.5,size=0.1,color=white,position=right,fontscale=1.5,label=$v_{n+k}$]{1111}
\Vertex[x=7,y=4.5,size=0.1,color=black,position=right,fontscale=1.5,label=$v_{n+k-1}$]{1010}
\Vertex[x=7,y=6,size=0.1,color=black,position=right,fontscale=1.5,label=$v_{n+2}$]{99}
\Vertex[x=7,y=7,size=0.1,color=black,position=right,fontscale=1.5,label=$v_{n+1}$]{88}

\Vertex[x=6,y=7,size=0.1,color=black,position=left,fontscale=1.5,label=$v_n$]{77}
\Vertex[x=6,y=6,size=0.1,color=white,position=left,fontscale=1.5,label=$v_{n-1}$]{66}
\Vertex[x=6,y=4.5,size=0.1,color=white,position=left,fontscale=1.5,label=$v_{n+2-k}$]{55}
\Vertex[x=6,y=3.5,size=0.1,color=white,position=left,fontscale=1.5,label=$v_{n+1-k}$]{44}
\Vertex[x=6,y=2.5,size=0.1,color=white,position=left,fontscale=1.5,label=$v_{n-k}$]{33}
\Vertex[x=6,y=1,size=0.1,color=white,position=left,fontscale=1.5,label=$v_2$]{22}
\Vertex[x=6,y=0,size=0.1,color=white,position=left,fontscale=1.5,label=$v_1$]{11}

\Edge[bend=-45](1111)(1212)
\path (33) -- (22) node [black, font=\small, midway, sloped] {$\dots$};
\path (66) -- (55) node [black, font=\small, midway, sloped] {$\dots$};
\path (99) -- (1010) node [black, font=\small, midway, sloped] {$\dots$};
\path (1313) -- (1212) node [black, font=\small, midway, sloped] {$\dots$};

\end{tikzpicture}\caption{Describing intermediate stages in the proof of Theorem~\ref{th:refined}}\label{d:tr2}
\end{center}
\end{figure}


The remaining $2n-2$ vertices can be paired by edges in a unique way, since they form the trivial sub-diagram. Indeed, $I\setminus \{n+k\} = [1,n-1]$ and $I^{c} \setminus \{n+k-1\}$ (or $I^{c} \setminus \{n+k+1\}$) are consecutive intervals and the remaining $2n-2$ vertices form a trivial diagram.  


Now, any two consecutive terms in \eqref{eq:5.5refinedshifted} share exactly one matching $K \in \K_n.$ Indeed, one of the two $K, K' \in \K_n$ for each product above has an edge connecting $v_{n+k}$ and $v_{n+k+1}.$ The sub-diagrams on the remaining $2n-2$ vertices are identical. Due to Theorem~\ref{t:oneprod} the partial sum in \eqref{eq:5.5refinedshifted} can be expressed as a sum of Temperley--Lieb immanants:
\begin{align*}\label{eq:5.5refImm}
\sum^{l}_{k=0}(-1)^{1+k} & \Delta_{[1,n-1]\cup\{n+k\}}\Delta_{[n,2n]\setminus \{n+k\}} \\
&=-\imm{K_0}+\big{(}\imm{K_0}+\imm{K_1}\big{)} - \big{(}\imm{K_1}+\imm{K_2}\big{)}\\
&\hspace*{8mm} +\big{(}\imm{K_2}(x)+\imm{K_3}\big{)} + \cdots + (-1)^{l+1} \big{(}\imm{K_{l-1}}(x)+\imm{K_{l}}\big{)} = (-1)^{l+1}\imm{K_{l}},
\end{align*}
for $l\in [1,n-1].$ This implies:
$
(-1)^{1+l}\sum^{l}_{k=0}(-1)^{1+k} \Delta_{[1,n-1]\cup\{n+k\}}\Delta_{[n,2n]\setminus \{n+k\}} = \imm{K_{l}}(x) \geq 0,$ for all $l\in [1,n-1].$ For the last step, i.e., $l=n,$ recall that this inequality (in fact identity) is the long Pl\"ucker relation itself. This concludes the proof.
%
%
\end{proof}    
\begin{example}\label{ex:1.2}
Let us illustrate the proof of Theorem~\ref{th:refined} when $n=3.$ Note the telescoping nature of the partial sums:
\begin{align*}
\sum^{3}_{k=0}(-1)^{1+k} \Delta_{[1,2]\cup\{3+k\}}\Delta_{[3,6]\setminus \{3+k\}}
     &= -\imm{K_0}+\big{(}\imm{K_0}+\imm{K_1}\big{)}
     -\big{(}\imm{K_1}+\imm{K_2}\big{)}+\imm{K_2} \\
     &= 0,
\end{align*}
where the corresponding immanants, indexed by Kauffman diagrams, are shown below and in Figures~\ref{d:1},~\ref{d:2},~\ref{d:3}, and ~\ref{d:4}: 
\begin{enumerate}
\item $k=0:$ $\Delta_{[1,2,3]}\Delta_{[4,5,6]} = \imm{K_0}$

\begin{figure}
\begin{center}
\begin{tikzpicture}
\Vertex[x=1,y=0,size=0.1,color=black,position=right,fontscale=1.5,label=$v_6$]{6}
\Vertex[x=1,y=1,size=0.1,color=black,position=right,fontscale=1.5,label=$v_5$]{5}
\Vertex[x=1,y=2,size=0.1,color=black,position=right,fontscale=1.5,label=$v_4$]{4}

\Vertex[x=0,y=2,size=0.1,color=white,position=left,fontscale=1.5,label=$v_3$]{3}
\Vertex[x=0,y=1,size=0.1,color=white,position=left,fontscale=1.5,label=$v_2$]{2}
\Vertex[x=0,y=0,size=0.1,color=white,position=left,fontscale=1.5,label=$v_1$]{1}

\Edge[](1)(6) \Edge(2)(5) \Edge(3)(4)
\end{tikzpicture}
\caption{$K_0$ in Example~\ref{ex:1.2}$(1)$}\label{d:1}
\end{center}
\end{figure}
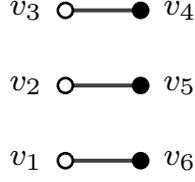
\item $k=1:$ $\Delta_{[1,2,4]}\Delta_{[3,5,6]} = \imm{K_0}+\imm{K_1}$

\begin{figure}
\begin{center}
\begin{tikzpicture}
\Vertex[x=1,y=0,size=0.1,color=black,position=right,fontscale=1.5,label=$v_6$]{6}
\Vertex[x=1,y=1,size=0.1,color=black,position=right,fontscale=1.5,label=$v_5$]{5}
\Vertex[x=1,y=2,size=0.1,color=white,position=right,fontscale=1.5,label=$v_4$]{4}

\Vertex[x=0,y=2,size=0.1,color=black,position=left,fontscale=1.5,label=$v_3$]{3}
\Vertex[x=0,y=1,size=0.1,color=white,position=left,fontscale=1.5,label=$v_2$]{2}
\Vertex[x=0,y=0,size=0.1,color=white,position=left,fontscale=1.5,label=$v_1$]{1}

\Edge[](1)(6) \Edge(2)(5) \Edge(3)(4)

\Vertex[x=5,y=0,size=0.1,color=black,position=right,fontscale=1.5,label=$v_6$]{66}
\Vertex[x=5,y=1,size=0.1,color=black,position=right,fontscale=1.5,label=$v_5$]{55}
\Vertex[x=5,y=2,size=0.1,color=white,position=right,fontscale=1.5,label=$v_4$]{44}

\Vertex[x=4,y=2,size=0.1,color=black,position=left,fontscale=1.5,label=$v_3$]{33}
\Vertex[x=4,y=1,size=0.1,color=white,position=left,fontscale=1.5,label=$v_2$]{22}
\Vertex[x=4,y=0,size=0.1,color=white,position=left,fontscale=1.5,label=$v_1$]{11}
 
\Edge[bend=45](33)(22) \Edge[bend=-45](44)(55) \Edge(11)(66)
\end{tikzpicture}
\hspace*{-2cm}\caption{$K_0$ and $K_1,$ respectively, in Example~\ref{ex:1.2}$(2).$}\label{d:2}
\end{center}
\end{figure}    

\item $k=2:$ $\Delta_{[1,2,5]}\Delta_{[3,4,6]} = \imm{K_1}+\imm{K_2}$

\begin{figure}
\begin{center}
\begin{tikzpicture}
\Vertex[x=1,y=0,size=0.1,color=black,position=right,fontscale=1.5,label=$v_6$]{6}
\Vertex[x=1,y=1,size=0.1,color=white,position=right,fontscale=1.5,label=$v_5$]{5}
\Vertex[x=1,y=2,size=0.1,color=black,position=right,fontscale=1.5,label=$v_4$]{4}

\Vertex[x=0,y=2,size=0.1,color=black,position=left,fontscale=1.5,label=$v_3$]{3}
\Vertex[x=0,y=1,size=0.1,color=white,position=left,fontscale=1.5,label=$v_2$]{2}
\Vertex[x=0,y=0,size=0.1,color=white,position=left,fontscale=1.5,label=$v_1$]{1}

\Edge(1)(6) \Edge[bend=-45](2)(3) \Edge[bend=45](5)(4)

\Vertex[x=5,y=0,size=0.1,color=black,position=right,fontscale=1.5,label=$v_6$]{66}
\Vertex[x=5,y=1,size=0.1,color=white,position=right,fontscale=1.5,label=$v_5$]{55}
\Vertex[x=5,y=2,size=0.1,color=black,position=right,fontscale=1.5,label=$v_4$]{44}

\Vertex[x=4,y=2,size=0.1,color=black,position=left,fontscale=1.5,label=$v_3$]{33}
\Vertex[x=4,y=1,size=0.1,color=white,position=left,fontscale=1.5,label=$v_2$]{22}
\Vertex[x=4,y=0,size=0.1,color=white,position=left,fontscale=1.5,label=$v_1$]{11}
 
\Edge(11)(44) \Edge[bend=-45](22)(33) \Edge[bend=-45](55)(66)
\end{tikzpicture}
\hspace*{-2cm}\caption{$K_1$ and $K_2,$ respectively, in Example~\ref{ex:1.2}$(3).$}\label{d:3}
\end{center}
\end{figure}

\item $k=3:$ $\Delta_{[1,2,6]}\Delta_{[3,4,5]} = \imm{K_2}$
\begin{figure}
\begin{center}
\begin{tikzpicture}
\Vertex[x=1,y=0,size=0.1,color=white,position=right,fontscale=1.5,label=$v_6$]{66}
\Vertex[x=1,y=1,size=0.1,color=black,position=right,fontscale=1.5,label=$v_5$]{55}
\Vertex[x=1,y=2,size=0.1,color=black,position=right,fontscale=1.5,label=$v_4$]{44}

\Vertex[x=0,y=2,size=0.1,color=black,position=left,fontscale=1.5,label=$v_3$]{33}
\Vertex[x=0,y=1,size=0.1,color=white,position=left,fontscale=1.5,label=$v_2$]{22}
\Vertex[x=0,y=0,size=0.1,color=white,position=left,fontscale=1.5,label=$v_1$]{11}
 
\Edge(11)(44) \Edge[bend=-45](22)(33) \Edge[bend=-45](55)(66)
\end{tikzpicture}
\hspace*{-2cm}\caption{$K_2$ in Example~\ref{ex:1.2}$(4).$}\label{d:4}
\end{center}
\end{figure}
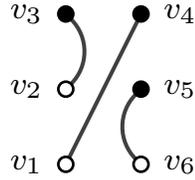
\end{enumerate}
\end{example}

\begin{example}
Let us consider an example of the inequalities in Theorem~\ref{th:refined} for $7 \times 7$ matrices $A$ and $d=4$ stated in terms of minors of $A$. Note that the first four inequalities are the same as in Example~\ref{ex:1}.
\begin{enumerate}
\item $-\Delta_{[1234],[4567]} \Delta_{[567],[123]}+\Delta_{[1234],[3567]} \Delta_{[567],[124]} \geq 0$\\
        
\item $\Delta_{[1234],[4567]} \Delta_{[567],[123]}-\Delta_{[1234],[3567]} \Delta_{[567],[124]}+\Delta_{[1234],[3467]} \Delta_{[567],[125]} \geq 0$\\
        
\item $-\Delta_{[1234],[4567]} \Delta_{[567],[123]}+\Delta_{[1234],[3567]} \Delta_{[567],[124]}-\Delta_{[1234],[3467]} \Delta_{[567],[125]}$ 

$+\Delta_{[1234],[3457]} \Delta_{[567],[126]} \geq 0$\\
        
\item $\Delta_{[1234],[4567]} \Delta_{[567],[123]}-\Delta_{[1234],[3567]} \Delta_{[567],[124]}+\Delta_{[1234],[3467]} \Delta_{[567],[125]}$ 

$-\Delta_{[1234],[3457]} \Delta_{[567],[126]} +\Delta_{[1234],[3456]} \Delta_{[567],[127]} \geq 0$\\
        
\item $-\Delta_{[1234],[4567]} \Delta_{[567],[123]}+\Delta_{[1234],[3567]} \Delta_{[567],[124]}-\Delta_{[1234],[3467]} \Delta_{[567],[125]}$ 

$+\Delta_{[1234],[3457]} \Delta_{[567],[126]} -\Delta_{[1234],[3456]} \Delta_{[567],[127]}+\Delta_{[12347],[34567]} \Delta_{[56],[12]} \geq 0$\\
        
\item $\Delta_{[1234],[4567]} \Delta_{[567],[123]}-\Delta_{[1234],[3567]} \Delta_{[567],[124]}+\Delta_{[1234],[3467]} \Delta_{[567],[125]}$ 

$-\Delta_{[1234],[3457]} \Delta_{[567],[126]} +\Delta_{[1234],[3456]} \Delta_{[567],[127]}-\Delta_{[12347],[34567]} \Delta_{[56],[12]}+$ 

$\Delta_{[12346],[34567]} \Delta_{[57],[12]}\geq 0$\\
        
\item $-\Delta_{[1234],[4567]} \Delta_{[567],[123]}+\Delta_{[1234],[3567]} \Delta_{[567],[124]}-\Delta_{[1234],[3467]} \Delta_{[567],[125]}$ 

$+\Delta_{[1234],[3457]} \Delta_{[567],[126]} -\Delta_{[1234],[3456]} \Delta_{[567],[127]}+\Delta_{[12347],[34567]} \Delta_{[56],[12]}$ 

$-\Delta_{[12346],[34567]} \Delta_{[57],[12]}+\Delta_{[12345],[34567]} \Delta_{[67],[12]}=0$
        
\end{enumerate}
\end{example}

\section{Generalization to pairs of weakly separated Pl\"ucker coordinates}\label{forward-imp}
In the theory of cluster algebras, one of the central examples is the cluster algebra of the Grassmannian $\Gr(m,m+n)$ \cite{JScott2006}. Clusters of minors are in correspondence with subsets of Pl\"ucker coordinates which can be described by set-theoretic properties. To recall, two $m$ element sets $I,J\subset [1,m+n]$ are weakly separated if $I\setminus J$ and $J \setminus I$ can be separated by a chord on the circle labeled with $1,2,\dots,m+n$ enumerated clockwise. A family of Pl\"ucker coordinates is said to be weakly separated if all pairs of Pl\"ucker coordinates in it are weakly separated \cite{LW}. It is known that maximal (by inclusion) weakly separated sets of Pl\"ucker coordinates are in bijection with the clusters consisting of Pl\"ucker coordinates in the cluster algebra of $\Gr{(m,m+n)}$ \cite{FarPost}. This makes weak separability important in the theory of cluster algebras (for instance also see \cite{OPostSpey2015}). As we discussed in the introduction, weak separability provides the exact classification of indices $I,J$ corresponding to which the Pl\"ucker-type inequalities hold. We stated this in Theorem~\ref{th:ws1}, which we prove in the next subsection.

\subsection{Inequalities for products of any two weakly separated Pl\"ucker coordinates}

We shall discuss another example before proving Theorem~\ref{th:ws1}.

\begin{example}\label{ex:22}
Suppose $m=n=6,$ and consider two weakly separated sets $I=[1~2~3~4~10~11]$ and $J=[5~6~7~8~9~11]$. Then $I \setminus J = [10~1~2~3~4]=:[i_1,i_2,i_3,i_4,i_5]$ and $J \setminus I = [5~6~7~8~9]=:[j_1,j_2,j_3,j_4,j_5].$ Therefore $\eta=5.$ Let us first write the long Pl\"ucker relation for $r=3,$ i.e. for $i_r=i_3$:
\begin{align*}
& \Delta_{[1~2~3~4~10~11]}\Delta_{[5~6~7~8~9~11]}-\Delta_{[1~3~4~5~10~11]}\Delta_{[2~6~7~8~9~11]} 
 +\Delta_{[1~3~4~6~10~11]}\Delta_{[2~5~7~8~9~11]}\\
& - \Delta_{[1~3~4~7~10~11]}\Delta_{[2~5~6~8~9~11]}
+\Delta_{[1~3~4~8~10~11]}\Delta_{[2~5~6~7~9~11]}-\Delta_{[1~3~4~9~10~11]}\Delta_{[2~5~6~7~8~11]}=0.
\end{align*}
%
Next, we express each product in the relation above as a linear combination of immanants. Each figure below presents a pre-matching with the mandatory edge $(v_{11},v_{12}),$ and the corresponding Kauffman diagrams. 
\begin{enumerate}
\item $\Delta_{[1~2~3~4~10~11]}\Delta_{[5~6~7~8~9~11]} = \imm{K_0}$
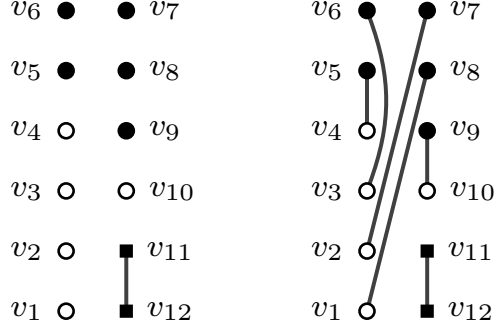
\begin{figure}
\begin{center}
\begin{tikzpicture}
\Vertex[x=0.8,y=0,size=0.1,color=black,shape=rectangle,position=right,fontscale=1.5,label=$v_{12}$]{12}
\Vertex[x=0.8,y=0.8,size=0.1,color=black,shape=rectangle,position=right,fontscale=1.5,label=$v_{11}$]{11}
\Vertex[x=0.8,y=1.6,size=0.1,color=white,position=right,fontscale=1.5,label=$v_{10}$]{10}
\Vertex[x=0.8,y=2.4,size=0.1,color=black,position=right,fontscale=1.5,label=$v_{9}$]{9}
\Vertex[x=0.8,y=3.2,size=0.1,color=black,position=right,fontscale=1.5,label=$v_{8}$]{8}
\Vertex[x=0.8,y=4.0,size=0.1,color=black,position=right,fontscale=1.5,label=$v_{7}$]{7}

\Vertex[x=0,y=4.0,size=0.1,color=black,position=left,fontscale=1.5,label=$v_6$]{6}
\Vertex[x=0,y=3.2,size=0.1,color=black,position=left,fontscale=1.5,label=$v_{5}$]{5}
\Vertex[x=0,y=2.4,size=0.1,color=white,position=left,fontscale=1.5,label=$v_{4}$]{4}
\Vertex[x=0,y=1.6,size=0.1,color=white,position=left,fontscale=1.5,label=$v_{3}$]{3}
\Vertex[x=0,y=0.8,size=0.1,color=white,position=left,fontscale=1.5,label=$v_2$]{2}
\Vertex[x=0,y=0,size=0.1,color=white,position=left,fontscale=1.5,label=$v_1$]{1}

\Edge(11)(12) 

\Vertex[x=4.8,y=0,size=0.1,color=black,shape=rectangle,position=right,fontscale=1.5,label=$v_{12}$]{1212}
\Vertex[x=4.8,y=0.8,size=0.1,color=black,shape=rectangle,position=right,fontscale=1.5,label=$v_{11}$]{1111}
\Vertex[x=4.8,y=1.6,size=0.1,color=white,position=right,fontscale=1.5,label=$v_{10}$]{1010}
\Vertex[x=4.8,y=2.4,size=0.1,color=black,position=right,fontscale=1.5,label=$v_{9}$]{99}
\Vertex[x=4.8,y=3.2,size=0.1,color=black,position=right,fontscale=1.5,label=$v_{8}$]{88}
\Vertex[x=4.8,y=4.0,size=0.1,color=black,position=right,fontscale=1.5,label=$v_{7}$]{77}

\Vertex[x=4,y=4.0,size=0.1,color=black,position=left,fontscale=1.5,label=$v_6$]{66}
\Vertex[x=4,y=3.2,size=0.1,color=black,position=left,fontscale=1.5,label=$v_{5}$]{55}
\Vertex[x=4,y=2.4,size=0.1,color=white,position=left,fontscale=1.5,label=$v_{4}$]{44}
\Vertex[x=4,y=1.6,size=0.1,color=white,position=left,fontscale=1.5,label=$v_{3}$]{33}
\Vertex[x=4,y=0.8,size=0.1,color=white,position=left,fontscale=1.5,label=$v_2$]{22}
\Vertex[x=4,y=0,size=0.1,color=white,position=left,fontscale=1.5,label=$v_1$]{11}

\Edge(1111)(1212)
\Edge(11)(88) \Edge(22)(77) \Edge[bend=-20](33)(66) \Edge(44)(55) \Edge(99)(1010) 
\end{tikzpicture}\caption{Pre-matching diagram and $K_0$ in Example~\ref{ex:22}}\label{ex:22:1}
\end{center}
\end{figure}
\item $\Delta_{[1~3~4~5~10~11]}\Delta_{[2~6~7~8~9~11]} =\imm{K_1}+\imm{K_2}$

\begin{figure}
\begin{center}
\begin{tikzpicture}
\Vertex[x=0.8,y=0,size=0.1,color=black,shape=rectangle,position=right,fontscale=1.5,label=$v_{12}$]{12}
\Vertex[x=0.8,y=0.8,size=0.1,color=black,shape=rectangle,position=right,fontscale=1.5,label=$v_{11}$]{11}
\Vertex[x=0.8,y=1.6,size=0.1,color=white,position=right,fontscale=1.5,label=$v_{10}$]{10}
\Vertex[x=0.8,y=2.4,size=0.1,color=black,position=right,fontscale=1.5,label=$v_{9}$]{9}
\Vertex[x=0.8,y=3.2,size=0.1,color=black,position=right,fontscale=1.5,label=$v_{8}$]{8}
\Vertex[x=0.8,y=4,size=0.1,color=black,position=right,fontscale=1.5,label=$v_{7}$]{7}

\Vertex[x=0,y=4,size=0.1,color=black,position=left,fontscale=1.5,label=$v_6$]{6}
\Vertex[x=0,y=3.2,size=0.1,color=white,position=left,fontscale=1.5,label=$v_{5}$]{5}
\Vertex[x=0,y=2.4,size=0.1,color=white,position=left,fontscale=1.5,label=$v_{4}$]{4}
\Vertex[x=0,y=1.6,size=0.1,color=white,position=left,fontscale=1.5,label=$v_{3}$]{3}
\Vertex[x=0,y=0.8,size=0.1,color=black,position=left,fontscale=1.5,label=$v_2$]{2}
\Vertex[x=0,y=0,size=0.1,color=white,position=left,fontscale=1.5,label=$v_1$]{1}

\Edge(11)(12) 

\Vertex[x=4.8,y=0,size=0.1,color=black,shape=rectangle,position=right,fontscale=1.5,label=$v_{12}$]{1212}
\Vertex[x=4.8,y=0.8,size=0.1,color=black,shape=rectangle,position=right,fontscale=1.5,label=$v_{11}$]{1111}
\Vertex[x=4.8,y=1.6,size=0.1,color=white,position=right,fontscale=1.5,label=$v_{10}$]{1010}
\Vertex[x=4.8,y=2.4,size=0.1,color=black,position=right,fontscale=1.5,label=$v_{9}$]{99}
\Vertex[x=4.8,y=3.2,size=0.1,color=black,position=right,fontscale=1.5,label=$v_{8}$]{88}
\Vertex[x=4.8,y=4,size=0.1,color=black,position=right,fontscale=1.5,label=$v_{7}$]{77}

\Vertex[x=4,y=4,size=0.1,color=black,position=left,fontscale=1.5,label=$v_6$]{66}
\Vertex[x=4,y=3.2,size=0.1,color=white,position=left,fontscale=1.5,label=$v_{5}$]{55}
\Vertex[x=4,y=2.4,size=0.1,color=white,position=left,fontscale=1.5,label=$v_{4}$]{44}
\Vertex[x=4,y=1.6,size=0.1,color=white,position=left,fontscale=1.5,label=$v_{3}$]{33}
\Vertex[x=4,y=0.8,size=0.1,color=black,position=left,fontscale=1.5,label=$v_2$]{22}
\Vertex[x=4,y=0,size=0.1,color=white,position=left,fontscale=1.5,label=$v_1$]{11}

\Edge(1111)(1212)
\Edge(11)(88) \Edge[bend=-20](22)(33) \Edge[bend=-45](55)(66) \Edge[bend=-5](44)(77) \Edge[bend=-20](99)(1010) 

\Vertex[x=8.8,y=0,size=0.1,color=black,shape=rectangle,position=right,fontscale=1.5,label=$v_{12}$]{121212}
\Vertex[x=8.8,y=0.8,size=0.1,color=black,shape=rectangle,position=right,fontscale=1.5,label=$v_{11}$]{111111}
\Vertex[x=8.8,y=1.6,size=0.1,color=white,position=right,fontscale=1.5,label=$v_{10}$]{101010}
\Vertex[x=8.8,y=2.4,size=0.1,color=black,position=right,fontscale=1.5,label=$v_{9}$]{999}
\Vertex[x=8.8,y=3.2,size=0.1,color=black,position=right,fontscale=1.5,label=$v_{8}$]{888}
\Vertex[x=8.8,y=4,size=0.1,color=black,position=right,fontscale=1.5,label=$v_{7}$]{777}

\Vertex[x=8,y=4,size=0.1,color=black,position=left,fontscale=1.5,label=$v_6$]{666}
\Vertex[x=8,y=3.2,size=0.1,color=white,position=left,fontscale=1.5,label=$v_{5}$]{555}
\Vertex[x=8,y=2.4,size=0.1,color=white,position=left,fontscale=1.5,label=$v_{4}$]{444}
\Vertex[x=8,y=1.6,size=0.1,color=white,position=left,fontscale=1.5,label=$v_{3}$]{333}
\Vertex[x=8,y=0.8,size=0.1,color=black,position=left,fontscale=1.5,label=$v_2$]{222}
\Vertex[x=8,y=0,size=0.1,color=white,position=left,fontscale=1.5,label=$v_1$]{111}

\Edge(111111)(121212)
\Edge[bend=-45](111)(222) \Edge(333)(888) \Edge(444)(777) \Edge[bend=-45](555)(666) \Edge[bend=-45](999)(101010) 
\end{tikzpicture}\caption{Pre-matching diagram, $K_1,$ and $K_2,$ respectively, in Example~\ref{ex:22}}\label{ex:22:2}
\end{center}
\end{figure}

\item $\Delta_{[1~3~4~6~10~11]}\Delta_{[2~5~7~8~9~11]} =\imm{K_1}+\imm{K_2}+\imm{K_3}+\imm{K_4}$

\begin{figure}
\begin{center}
\begin{tikzpicture}
\Vertex[x=0.8,y=0,size=0.1,color=black,shape=rectangle,position=right,fontscale=1.5,label=$v_{12}$]{12}
\Vertex[x=0.8,y=0.8,size=0.1,color=black,shape=rectangle,position=right,fontscale=1.5,label=$v_{11}$]{11}
\Vertex[x=0.8,y=1.6,size=0.1,color=white,position=right,fontscale=1.5,label=$v_{10}$]{10}
\Vertex[x=0.8,y=2.4,size=0.1,color=black,position=right,fontscale=1.5,label=$v_{9}$]{9}
\Vertex[x=0.8,y=3.2,size=0.1,color=black,position=right,fontscale=1.5,label=$v_{8}$]{8}
\Vertex[x=0.8,y=4,size=0.1,color=black,position=right,fontscale=1.5,label=$v_{7}$]{7}

\Vertex[x=0,y=4,size=0.1,color=white,position=left,fontscale=1.5,label=$v_6$]{6}
\Vertex[x=0,y=3.2,size=0.1,color=black,position=left,fontscale=1.5,label=$v_{5}$]{5}
\Vertex[x=0,y=2.4,size=0.1,color=white,position=left,fontscale=1.5,label=$v_{4}$]{4}
\Vertex[x=0,y=1.6,size=0.1,color=white,position=left,fontscale=1.5,label=$v_{3}$]{3}
\Vertex[x=0,y=0.8,size=0.1,color=black,position=left,fontscale=1.5,label=$v_2$]{2}
\Vertex[x=0,y=0,size=0.1,color=white,position=left,fontscale=1.5,label=$v_1$]{1}

\Edge(11)(12) 

\Vertex[x=3.8,y=0,size=0.1,color=black,shape=rectangle,position=right,fontscale=1.5,label=$v_{12}$]{1212}
\Vertex[x=3.8,y=0.8,size=0.1,color=black,shape=rectangle,position=right,fontscale=1.5,label=$v_{11}$]{1111}
\Vertex[x=3.8,y=1.6,size=0.1,color=white,position=right,fontscale=1.5,label=$v_{10}$]{1010}
\Vertex[x=3.8,y=2.4,size=0.1,color=black,position=right,fontscale=1.5,label=$v_{9}$]{99}
\Vertex[x=3.8,y=3.2,size=0.1,color=black,position=right,fontscale=1.5,label=$v_{8}$]{88}
\Vertex[x=3.8,y=4.0,size=0.1,color=black,position=right,fontscale=1.5,label=$v_{7}$]{77}

\Vertex[x=3,y=4.0,size=0.1,color=white,position=left,fontscale=1.5,label=$v_6$]{66}
\Vertex[x=3,y=3.2,size=0.1,color=black,position=left,fontscale=1.5,label=$v_{5}$]{55}
\Vertex[x=3,y=2.4,size=0.1,color=white,position=left,fontscale=1.5,label=$v_{4}$]{44}
\Vertex[x=3,y=1.6,size=0.1,color=white,position=left,fontscale=1.5,label=$v_{3}$]{33}
\Vertex[x=3,y=0.8,size=0.1,color=black,position=left,fontscale=1.5,label=$v_2$]{22}
\Vertex[x=3,y=0,size=0.1,color=white,position=left,fontscale=1.5,label=$v_1$]{11}

\Edge(1111)(1212)
\Edge(11)(88) \Edge[bend=-20](22)(33) \Edge[bend=-45](55)(66) \Edge[bend=-10](44)(77) \Edge[bend=-20](99)(1010) 

\Vertex[x=6.8,y=0,size=0.1,color=black,shape=rectangle,position=right,fontscale=1.5,label=$v_{12}$]{121212}
\Vertex[x=6.8,y=0.8,size=0.1,color=black,shape=rectangle,position=right,fontscale=1.5,label=$v_{11}$]{111111}
\Vertex[x=6.8,y=1.6,size=0.1,color=white,position=right,fontscale=1.5,label=$v_{10}$]{101010}
\Vertex[x=6.8,y=2.4,size=0.1,color=black,position=right,fontscale=1.5,label=$v_{9}$]{999}
\Vertex[x=6.8,y=3.2,size=0.1,color=black,position=right,fontscale=1.5,label=$v_{8}$]{888}
\Vertex[x=6.8,y=4,size=0.1,color=black,position=right,fontscale=1.5,label=$v_{7}$]{777}

\Vertex[x=6,y=4,size=0.1,color=white,position=left,fontscale=1.5,label=$v_6$]{666}
\Vertex[x=6,y=3.2,size=0.1,color=black,position=left,fontscale=1.5,label=$v_{5}$]{555}
\Vertex[x=6,y=2.4,size=0.1,color=white,position=left,fontscale=1.5,label=$v_{4}$]{444}
\Vertex[x=6,y=1.6,size=0.1,color=white,position=left,fontscale=1.5,label=$v_{3}$]{333}
\Vertex[x=6,y=0.8,size=0.1,color=black,position=left,fontscale=1.5,label=$v_2$]{222}
\Vertex[x=6,y=0,size=0.1,color=white,position=left,fontscale=1.5,label=$v_1$]{111}

\Edge(111111)(121212)
\Edge[bend=-45](111)(222) \Edge(333)(888) \Edge(444)(777) \Edge[bend=-45](555)(666) \Edge[bend=-45](999)(101010) 

\Vertex[x=9.8,y=0,size=0.1,color=black,shape=rectangle,position=right,fontscale=1.5,label=$v_{12}$]{12121212}
\Vertex[x=9.8,y=0.8,size=0.1,color=black,shape=rectangle,position=right,fontscale=1.5,label=$v_{11}$]{11111111}
\Vertex[x=9.8,y=1.6,size=0.1,color=white,position=right,fontscale=1.5,label=$v_{10}$]{10101010}
\Vertex[x=9.8,y=2.4,size=0.1,color=black,position=right,fontscale=1.5,label=$v_{9}$]{9999}
\Vertex[x=9.8,y=3.2,size=0.1,color=black,position=right,fontscale=1.5,label=$v_{8}$]{8888}
\Vertex[x=9.8,y=4,size=0.1,color=black,position=right,fontscale=1.5,label=$v_{7}$]{7777}

\Vertex[x=9,y=4,size=0.1,color=white,position=left,fontscale=1.5,label=$v_6$]{6666}
\Vertex[x=9,y=3.2,size=0.1,color=black,position=left,fontscale=1.5,label=$v_{5}$]{5555}
\Vertex[x=9,y=2.4,size=0.1,color=white,position=left,fontscale=1.5,label=$v_{4}$]{4444}
\Vertex[x=9,y=1.6,size=0.1,color=white,position=left,fontscale=1.5,label=$v_{3}$]{3333}
\Vertex[x=9,y=0.8,size=0.1,color=black,position=left,fontscale=1.5,label=$v_2$]{2222}
\Vertex[x=9,y=0,size=0.1,color=white,position=left,fontscale=1.5,label=$v_1$]{1111}

\Edge(11111111)(12121212)
\Edge[bend=-45](1111)(2222) \Edge(3333)(8888) \Edge[bend=-45](4444)(5555) \Edge(6666)(7777) \Edge[bend=-45](9999)(10101010) 

\Vertex[x=12.8,y=0,size=0.1,color=black,shape=rectangle,position=right,fontscale=1.5,label=$v_{12}$]{1212121212}
\Vertex[x=12.8,y=0.8,size=0.1,color=black,shape=rectangle,position=right,fontscale=1.5,label=$v_{11}$]{1111111111}
\Vertex[x=12.8,y=1.6,size=0.1,color=white,position=right,fontscale=1.5,label=$v_{10}$]{1010101010}
\Vertex[x=12.8,y=2.4,size=0.1,color=black,position=right,fontscale=1.5,label=$v_{9}$]{99999}
\Vertex[x=12.8,y=3.2,size=0.1,color=black,position=right,fontscale=1.5,label=$v_{8}$]{88888}
\Vertex[x=12.8,y=4,size=0.1,color=black,position=right,fontscale=1.5,label=$v_{7}$]{77777}

\Vertex[x=12,y=4,size=0.1,color=white,position=left,fontscale=1.5,label=$v_6$]{66666}
\Vertex[x=12,y=3.2,size=0.1,color=black,position=left,fontscale=1.5,label=$v_{5}$]{55555}
\Vertex[x=12,y=2.4,size=0.1,color=white,position=left,fontscale=1.5,label=$v_{4}$]{44444}
\Vertex[x=12,y=1.6,size=0.1,color=white,position=left,fontscale=1.5,label=$v_{3}$]{33333}
\Vertex[x=12,y=0.8,size=0.1,color=black,position=left,fontscale=1.5,label=$v_2$]{22222}
\Vertex[x=12,y=0,size=0.1,color=white,position=left,fontscale=1.5,label=$v_1$]{11111}

\Edge(1111111111)(1212121212)
\Edge(11111)(88888) \Edge[bend=20](33333)(22222) \Edge[bend=-45](44444)(55555) \Edge(77777)(66666) \Edge[bend=-25](99999)(1010101010) 
\end{tikzpicture}\caption{Pre-matching diagram, $K_1,$ $K_2,$ $K_3,$ and $K_4,$ respectively, in Example~\ref{ex:22}}\label{ex:22:3}
\end{center}
\end{figure}

\item $\Delta_{[1~3~4~7~10~11]}\Delta_{[2~5~6~8~9~11]} =\imm{K_3}+\imm{K_4}+\imm{K_5}+\imm{K_6}+\imm{K_0}$

\begin{figure}
\begin{center}
\hspace*{-0.5cm}\begin{tikzpicture}
\Vertex[x=0.8,y=0,size=0.1,color=black,shape=rectangle,position=right,fontscale=1.5,label=$v_{12}$]{P12}
\Vertex[x=0.8,y=0.8,size=0.1,color=black,shape=rectangle,position=right,fontscale=1.5,label=$v_{11}$]{P11}
\Vertex[x=0.8,y=1.6,size=0.1,color=white,position=right,fontscale=1.5,label=$v_{10}$]{P10}
\Vertex[x=0.8,y=2.4,size=0.1,color=black,position=right,fontscale=1.5,label=$v_{9}$]{P9}
\Vertex[x=0.8,y=3.2,size=0.1,color=black,position=right,fontscale=1.5,label=$v_{8}$]{P8}
\Vertex[x=0.8,y=4,size=0.1,color=white,position=right,fontscale=1.5,label=$v_{7}$]{P7}

\Vertex[x=0,y=4,size=0.1,color=black,position=left,fontscale=1.5,label=$v_6$]{P6}
\Vertex[x=0,y=3.2,size=0.1,color=black,position=left,fontscale=1.5,label=$v_{5}$]{P5}
\Vertex[x=0,y=2.4,size=0.1,color=white,position=left,fontscale=1.5,label=$v_{4}$]{P4}
\Vertex[x=0,y=1.6,size=0.1,color=white,position=left,fontscale=1.5,label=$v_{3}$]{P3}
\Vertex[x=0,y=0.8,size=0.1,color=black,position=left,fontscale=1.5,label=$v_2$]{P2}
\Vertex[x=0,y=0,size=0.1,color=white,position=left,fontscale=1.5,label=$v_1$]{P1}

\Edge(P11)(P12)

\Vertex[x=3.8,y=0,size=0.1,color=black,shape=rectangle,position=right,fontscale=1.5,label=$v_{12}$]{K012}
\Vertex[x=3.8,y=0.8,size=0.1,color=black,shape=rectangle,position=right,fontscale=1.5,label=$v_{11}$]{K011}
\Vertex[x=3.8,y=1.6,size=0.1,color=white,position=right,fontscale=1.5,label=$v_{10}$]{K010}
\Vertex[x=3.8,y=2.4,size=0.1,color=black,position=right,fontscale=1.5,label=$v_{9}$]{K09}
\Vertex[x=3.8,y=3.2,size=0.1,color=black,position=right,fontscale=1.5,label=$v_{8}$]{K08}
\Vertex[x=3.8,y=4,size=0.1,color=white,position=right,fontscale=1.5,label=$v_{7}$]{K07}

\Vertex[x=3,y=4,size=0.1,color=black,position=left,fontscale=1.5,label=$v_6$]{K06}
\Vertex[x=3,y=3.2,size=0.1,color=black,position=left,fontscale=1.5,label=$v_{5}$]{K05}
\Vertex[x=3,y=2.4,size=0.1,color=white,position=left,fontscale=1.5,label=$v_{4}$]{K04}
\Vertex[x=3,y=1.6,size=0.1,color=white,position=left,fontscale=1.5,label=$v_{3}$]{K03}
\Vertex[x=3,y=0.8,size=0.1,color=black,position=left,fontscale=1.5,label=$v_2$]{K02}
\Vertex[x=3,y=0,size=0.1,color=white,position=left,fontscale=1.5,label=$v_1$]{K01}

\Edge(K011)(K012) 
\Edge(K01)(K08) \Edge(K02)(K07) \Edge[bend=-20](K03)(K06) \Edge(K04)(K05) \Edge(K09)(K010) 

\Vertex[x=6.8,y=0,size=0.1,color=black,shape=rectangle,position=right,fontscale=1.5,label=$v_{12}$]{K312}
\Vertex[x=6.8,y=0.8,size=0.1,color=black,shape=rectangle,position=right,fontscale=1.5,label=$v_{11}$]{K311}
\Vertex[x=6.8,y=1.6,size=0.1,color=white,position=right,fontscale=1.5,label=$v_{10}$]{K310}
\Vertex[x=6.8,y=2.4,size=0.1,color=black,position=right,fontscale=1.5,label=$v_{9}$]{K39}
\Vertex[x=6.8,y=3.2,size=0.1,color=black,position=right,fontscale=1.5,label=$v_{8}$]{K38}
\Vertex[x=6.8,y=4,size=0.1,color=white,position=right,fontscale=1.5,label=$v_{7}$]{K37}

\Vertex[x=6,y=4,size=0.1,color=black,position=left,fontscale=1.5,label=$v_6$]{K36}
\Vertex[x=6,y=3.2,size=0.1,color=black,position=left,fontscale=1.5,label=$v_{5}$]{K35}
\Vertex[x=6,y=2.4,size=0.1,color=white,position=left,fontscale=1.5,label=$v_{4}$]{K34}
\Vertex[x=6,y=1.6,size=0.1,color=white,position=left,fontscale=1.5,label=$v_{3}$]{K33}
\Vertex[x=6,y=0.8,size=0.1,color=black,position=left,fontscale=1.5,label=$v_2$]{K32}
\Vertex[x=6,y=0,size=0.1,color=white,position=left,fontscale=1.5,label=$v_1$]{K31}

\Edge(K311)(K312)
\Edge[bend=-45](K31)(K32) \Edge(K33)(K38) \Edge[bend=-45](K34)(K35) \Edge(K36)(K37) \Edge[bend=-45](K39)(K310) 

\Vertex[x=9.8,y=0,size=0.1,color=black,shape=rectangle,position=right,fontscale=1.5,label=$v_{12}$]{K412}
\Vertex[x=9.8,y=0.8,size=0.1,color=black,shape=rectangle,position=right,fontscale=1.5,label=$v_{11}$]{K411}
\Vertex[x=9.8,y=1.6,size=0.1,color=white,position=right,fontscale=1.5,label=$v_{10}$]{K410}
\Vertex[x=9.8,y=2.4,size=0.1,color=black,position=right,fontscale=1.5,label=$v_{9}$]{K49}
\Vertex[x=9.8,y=3.2,size=0.1,color=black,position=right,fontscale=1.5,label=$v_{8}$]{K48}
\Vertex[x=9.8,y=4,size=0.1,color=white,position=right,fontscale=1.5,label=$v_{7}$]{K47}

\Vertex[x=9,y=4,size=0.1,color=black,position=left,fontscale=1.5,label=$v_6$]{K46}
\Vertex[x=9,y=3.2,size=0.1,color=black,position=left,fontscale=1.5,label=$v_{5}$]{K45}
\Vertex[x=9,y=2.4,size=0.1,color=white,position=left,fontscale=1.5,label=$v_{4}$]{K44}
\Vertex[x=9,y=1.6,size=0.1,color=white,position=left,fontscale=1.5,label=$v_{3}$]{K43}
\Vertex[x=9,y=0.8,size=0.1,color=black,position=left,fontscale=1.5,label=$v_2$]{K42}
\Vertex[x=9,y=0,size=0.1,color=white,position=left,fontscale=1.5,label=$v_1$]{K41}

\Edge(K411)(K412)
\Edge(K41)(K48) \Edge[bend=20](K43)(K42) \Edge[bend=-45](K44)(K45) \Edge(K47)(K46) \Edge[bend=-25](K49)(K410)

\Vertex[x=12.8,y=0,size=0.1,color=black,shape=rectangle,position=right,fontscale=1.5,label=$v_{12}$]{K512}
\Vertex[x=12.8,y=0.8,size=0.1,color=black,shape=rectangle,position=right,fontscale=1.5,label=$v_{11}$]{K511}
\Vertex[x=12.8,y=1.6,size=0.1,color=white,position=right,fontscale=1.5,label=$v_{10}$]{K510}
\Vertex[x=12.8,y=2.4,size=0.1,color=black,position=right,fontscale=1.5,label=$v_{9}$]{K59}
\Vertex[x=12.8,y=3.2,size=0.1,color=black,position=right,fontscale=1.5,label=$v_{8}$]{K58}
\Vertex[x=12.8,y=4,size=0.1,color=white,position=right,fontscale=1.5,label=$v_{7}$]{K57}

\Vertex[x=12,y=4,size=0.1,color=black,position=left,fontscale=1.5,label=$v_6$]{K56}
\Vertex[x=12,y=3.2,size=0.1,color=black,position=left,fontscale=1.5,label=$v_{5}$]{K55}
\Vertex[x=12,y=2.4,size=0.1,color=white,position=left,fontscale=1.5,label=$v_{4}$]{K54}
\Vertex[x=12,y=1.6,size=0.1,color=white,position=left,fontscale=1.5,label=$v_{3}$]{K53}
\Vertex[x=12,y=0.8,size=0.1,color=black,position=left,fontscale=1.5,label=$v_2$]{K52}
\Vertex[x=12,y=0,size=0.1,color=white,position=left,fontscale=1.5,label=$v_1$]{K51}

\Edge(K511)(K512)
\Edge[bend=-20](K51)(K56) \Edge[bend=30](K53)(K52) \Edge[bend=-30](K54)(K55) \Edge[bend=-45](K57)(K58) \Edge[bend=-30](K59)(K510)

\Vertex[x=15.8,y=0,size=0.1,color=black,shape=rectangle,position=right,fontscale=1.5,label=$v_{12}$]{K612}
\Vertex[x=15.8,y=0.8,size=0.1,color=black,shape=rectangle,position=right,fontscale=1.5,label=$v_{11}$]{K611}
\Vertex[x=15.8,y=1.6,size=0.1,color=white,position=right,fontscale=1.5,label=$v_{10}$]{K610}
\Vertex[x=15.8,y=2.4,size=0.1,color=black,position=right,fontscale=1.5,label=$v_{9}$]{K69}
\Vertex[x=15.8,y=3.2,size=0.1,color=black,position=right,fontscale=1.5,label=$v_{8}$]{K68}
\Vertex[x=15.8,y=4,size=0.1,color=white,position=right,fontscale=1.5,label=$v_{7}$]{K67}

\Vertex[x=15,y=4,size=0.1,color=black,position=left,fontscale=1.5,label=$v_6$]{K66}
\Vertex[x=15,y=3.2,size=0.1,color=black,position=left,fontscale=1.5,label=$v_{5}$]{K65}
\Vertex[x=15,y=2.4,size=0.1,color=white,position=left,fontscale=1.5,label=$v_{4}$]{K64}
\Vertex[x=15,y=1.6,size=0.1,color=white,position=left,fontscale=1.5,label=$v_{3}$]{K63}
\Vertex[x=15,y=0.8,size=0.1,color=black,position=left,fontscale=1.5,label=$v_2$]{K62}
\Vertex[x=15,y=0,size=0.1,color=white,position=left,fontscale=1.5,label=$v_1$]{K61}

\Edge(K611)(K612)

\Edge[bend=-45](K61)(K62) \Edge[bend=-30](K63)(K66) \Edge[bend=-25](K64)(K65) \Edge[bend=-45](K67)(K68) \Edge[bend=-45](K69)(K610) 

\end{tikzpicture}\caption{Pre-matching diagram, $K_0,$ $K_3,$ $K_4,$ $K_5,$ and $K_6,$ respectively, in Example~\ref{ex:22}}\label{ex:22:4}
\end{center}
\end{figure}

\item $\Delta_{[1~3~4~8~10~11]}\Delta_{[2~5~6~7~9~11]} =\imm{K_5}+\imm{K_6}+\imm{K_7}+\imm{K_8}$

\begin{figure}
\begin{center}
\hspace*{-0.5cm}\begin{tikzpicture}
\Vertex[x=0.8,y=0,size=0.1,color=black,shape=rectangle,position=right,fontscale=1.5,label=$v_{12}$]{P12}
\Vertex[x=0.8,y=0.8,size=0.1,color=black,shape=rectangle,position=right,fontscale=1.5,label=$v_{11}$]{P11}
\Vertex[x=0.8,y=1.6,size=0.1,color=white,position=right,fontscale=1.5,label=$v_{10}$]{P10}
\Vertex[x=0.8,y=2.4,size=0.1,color=black,position=right,fontscale=1.5,label=$v_{9}$]{P9}
\Vertex[x=0.8,y=3.2,size=0.1,color=white,position=right,fontscale=1.5,label=$v_{8}$]{P8}
\Vertex[x=0.8,y=4,size=0.1,color=black,position=right,fontscale=1.5,label=$v_{7}$]{P7}

\Vertex[x=0,y=4,size=0.1,color=black,position=left,fontscale=1.5,label=$v_6$]{P6}
\Vertex[x=0,y=3.2,size=0.1,color=black,position=left,fontscale=1.5,label=$v_{5}$]{P5}
\Vertex[x=0,y=2.4,size=0.1,color=white,position=left,fontscale=1.5,label=$v_{4}$]{P4}
\Vertex[x=0,y=1.6,size=0.1,color=white,position=left,fontscale=1.5,label=$v_{3}$]{P3}
\Vertex[x=0,y=0.8,size=0.1,color=black,position=left,fontscale=1.5,label=$v_2$]{P2}
\Vertex[x=0,y=0,size=0.1,color=white,position=left,fontscale=1.5,label=$v_1$]{P1}

\Edge(P11)(P12) 

\Vertex[x=3.8,y=0,size=0.1,color=black,shape=rectangle,position=right,fontscale=1.5,label=$v_{12}$]{K512}
\Vertex[x=3.8,y=0.8,size=0.1,color=black,shape=rectangle,position=right,fontscale=1.5,label=$v_{11}$]{K511}
\Vertex[x=3.8,y=1.6,size=0.1,color=white,position=right,fontscale=1.5,label=$v_{10}$]{K510}
\Vertex[x=3.8,y=2.4,size=0.1,color=black,position=right,fontscale=1.5,label=$v_{9}$]{K59}
\Vertex[x=3.8,y=3.2,size=0.1,color=white,position=right,fontscale=1.5,label=$v_{8}$]{K58}
\Vertex[x=3.8,y=4,size=0.1,color=black,position=right,fontscale=1.5,label=$v_{7}$]{K57}

\Vertex[x=3,y=4,size=0.1,color=black,position=left,fontscale=1.5,label=$v_6$]{K56}
\Vertex[x=3,y=3.2,size=0.1,color=black,position=left,fontscale=1.5,label=$v_{5}$]{K55}
\Vertex[x=3,y=2.4,size=0.1,color=white,position=left,fontscale=1.5,label=$v_{4}$]{K54}
\Vertex[x=3,y=1.6,size=0.1,color=white,position=left,fontscale=1.5,label=$v_{3}$]{K53}
\Vertex[x=3,y=0.8,size=0.1,color=black,position=left,fontscale=1.5,label=$v_2$]{K52}
\Vertex[x=3,y=0,size=0.1,color=white,position=left,fontscale=1.5,label=$v_1$]{K51}

\Edge(K511)(K512)
\Edge[bend=-20](K51)(K56) \Edge[bend=20](K53)(K52) \Edge[bend=-25](K54)(K55) \Edge[bend=-30](K57)(K58) \Edge[bend=-30](K59)(K510)

\Vertex[x=6.8,y=0,size=0.1,color=black,shape=rectangle,position=right,fontscale=1.5,label=$v_{12}$]{K612}
\Vertex[x=6.8,y=0.8,size=0.1,color=black,shape=rectangle,position=right,fontscale=1.5,label=$v_{11}$]{K611}
\Vertex[x=6.8,y=1.6,size=0.1,color=white,position=right,fontscale=1.5,label=$v_{10}$]{K610}
\Vertex[x=6.8,y=2.4,size=0.1,color=black,position=right,fontscale=1.5,label=$v_{9}$]{K69}
\Vertex[x=6.8,y=3.2,size=0.1,color=white,position=right,fontscale=1.5,label=$v_{8}$]{K68}
\Vertex[x=6.8,y=4,size=0.1,color=black,position=right,fontscale=1.5,label=$v_{7}$]{K67}

\Vertex[x=6,y=4,size=0.1,color=black,position=left,fontscale=1.5,label=$v_6$]{K66}
\Vertex[x=6,y=3.2,size=0.1,color=black,position=left,fontscale=1.5,label=$v_{5}$]{K65}
\Vertex[x=6,y=2.4,size=0.1,color=white,position=left,fontscale=1.5,label=$v_{4}$]{K64}
\Vertex[x=6,y=1.6,size=0.1,color=white,position=left,fontscale=1.5,label=$v_{3}$]{K63}
\Vertex[x=6,y=0.8,size=0.1,color=black,position=left,fontscale=1.5,label=$v_2$]{K62}
\Vertex[x=6,y=0,size=0.1,color=white,position=left,fontscale=1.5,label=$v_1$]{K61}

\Edge(K611)(K612)
\Edge[bend=-45](K61)(K62) \Edge[bend=-30](K63)(K66) \Edge[bend=-25](K64)(K65) \Edge[bend=-45](K67)(K68) \Edge[bend=-45](K69)(K610) 

\Vertex[x=9.8,y=0,size=0.1,color=black,shape=rectangle,position=right,fontscale=1.5,label=$v_{12}$]{K712}
\Vertex[x=9.8,y=0.8,size=0.1,color=black,shape=rectangle,position=right,fontscale=1.5,label=$v_{11}$]{K711}
\Vertex[x=9.8,y=1.6,size=0.1,color=white,position=right,fontscale=1.5,label=$v_{10}$]{K710}
\Vertex[x=9.8,y=2.4,size=0.1,color=black,position=right,fontscale=1.5,label=$v_{9}$]{K79}
\Vertex[x=9.8,y=3.2,size=0.1,color=white,position=right,fontscale=1.5,label=$v_{8}$]{K78}
\Vertex[x=9.8,y=4,size=0.1,color=black,position=right,fontscale=1.5,label=$v_{7}$]{K77}

\Vertex[x=9,y=4,size=0.1,color=black,position=left,fontscale=1.5,label=$v_6$]{K76}
\Vertex[x=9,y=3.2,size=0.1,color=black,position=left,fontscale=1.5,label=$v_{5}$]{K75}
\Vertex[x=9,y=2.4,size=0.1,color=white,position=left,fontscale=1.5,label=$v_{4}$]{K74}
\Vertex[x=9,y=1.6,size=0.1,color=white,position=left,fontscale=1.5,label=$v_{3}$]{K73}
\Vertex[x=9,y=0.8,size=0.1,color=black,position=left,fontscale=1.5,label=$v_2$]{K72}
\Vertex[x=9,y=0,size=0.1,color=white,position=left,fontscale=1.5,label=$v_1$]{K71}

\Edge(K711)(K712)
\Edge[bend=-20](K71)(K76) \Edge[bend=-20](K72)(K73) \Edge[bend=-25](K74)(K75) \Edge[bend=-20](K77)(K710) \Edge[bend=20](K79)(K78) 

\Vertex[x=12.8,y=0,size=0.1,color=black,shape=rectangle,position=right,fontscale=1.5,label=$v_{12}$]{K812}
\Vertex[x=12.8,y=0.8,size=0.1,color=black,shape=rectangle,position=right,fontscale=1.5,label=$v_{11}$]{K811}
\Vertex[x=12.8,y=1.6,size=0.1,color=white,position=right,fontscale=1.5,label=$v_{10}$]{K810}
\Vertex[x=12.8,y=2.4,size=0.1,color=black,position=right,fontscale=1.5,label=$v_{9}$]{K89}
\Vertex[x=12.8,y=3.2,size=0.1,color=white,position=right,fontscale=1.5,label=$v_{8}$]{K88}
\Vertex[x=12.8,y=4,size=0.1,color=black,position=right,fontscale=1.5,label=$v_{7}$]{K87}

\Vertex[x=12,y=4,size=0.1,color=black,position=left,fontscale=1.5,label=$v_6$]{K86}
\Vertex[x=12,y=3.2,size=0.1,color=black,position=left,fontscale=1.5,label=$v_{5}$]{K85}
\Vertex[x=12,y=2.4,size=0.1,color=white,position=left,fontscale=1.5,label=$v_{4}$]{K84}
\Vertex[x=12,y=1.6,size=0.1,color=white,position=left,fontscale=1.5,label=$v_{3}$]{K83}
\Vertex[x=12,y=0.8,size=0.1,color=black,position=left,fontscale=1.5,label=$v_2$]{K82}
\Vertex[x=12,y=0,size=0.1,color=white,position=left,fontscale=1.5,label=$v_1$]{K81}

\Edge(K811)(K812)
\Edge[bend=-45](K81)(K82) \Edge[bend=-25](K83)(K86) \Edge[bend=-25](K84)(K85) \Edge[bend=-25](K87)(K810) \Edge[bend=-25](K88)(K89) 

\end{tikzpicture}\caption{Pre-matching diagram, $K_5,$ $K_6,$ $K_7,$ and $K_8,$ respectively, in Example~\ref{ex:22}}\label{ex:22:5}
\end{center}
\end{figure}

\item $\Delta_{[1~3~4~9~10~11]}\Delta_{[2~5~6~7~8~11]} =\imm{K_7}+\imm{K_8}$
\begin{figure}
\begin{center}
\hspace*{-0.5cm}\begin{tikzpicture}
\Vertex[x=0.8,y=0,size=0.1,color=black,shape=rectangle,position=right,fontscale=1.5,label=$v_{12}$]{P12}
\Vertex[x=0.8,y=0.8,size=0.1,color=black,shape=rectangle,position=right,fontscale=1.5,label=$v_{11}$]{P11}
\Vertex[x=0.8,y=1.6,size=0.1,color=white,position=right,fontscale=1.5,label=$v_{10}$]{P10}
\Vertex[x=0.8,y=2.4,size=0.1,color=white,position=right,fontscale=1.5,label=$v_{9}$]{P9}
\Vertex[x=0.8,y=3.2,size=0.1,color=black,position=right,fontscale=1.5,label=$v_{8}$]{P8}
\Vertex[x=0.8,y=4,size=0.1,color=black,position=right,fontscale=1.5,label=$v_{7}$]{P7}

\Vertex[x=0,y=4,size=0.1,color=black,position=left,fontscale=1.5,label=$v_6$]{P6}
\Vertex[x=0,y=3.2,size=0.1,color=black,position=left,fontscale=1.5,label=$v_{5}$]{P5}
\Vertex[x=0,y=2.4,size=0.1,color=white,position=left,fontscale=1.5,label=$v_{4}$]{P4}
\Vertex[x=0,y=1.6,size=0.1,color=white,position=left,fontscale=1.5,label=$v_{3}$]{P3}
\Vertex[x=0,y=0.8,size=0.1,color=black,position=left,fontscale=1.5,label=$v_2$]{P2}
\Vertex[x=0,y=0,size=0.1,color=white,position=left,fontscale=1.5,label=$v_1$]{P1}

\Edge(P11)(P12) 

\Vertex[x=4.8,y=0,size=0.1,color=black,shape=rectangle,position=right,fontscale=1.5,label=$v_{12}$]{K712}
\Vertex[x=4.8,y=0.8,size=0.1,color=black,shape=rectangle,position=right,fontscale=1.5,label=$v_{11}$]{K711}
\Vertex[x=4.8,y=1.6,size=0.1,color=white,position=right,fontscale=1.5,label=$v_{10}$]{K710}
\Vertex[x=4.8,y=2.4,size=0.1,color=white,position=right,fontscale=1.5,label=$v_{9}$]{K79}
\Vertex[x=4.8,y=3.2,size=0.1,color=black,position=right,fontscale=1.5,label=$v_{8}$]{K78}
\Vertex[x=4.8,y=4,size=0.1,color=black,position=right,fontscale=1.5,label=$v_{7}$]{K77}

\Vertex[x=4,y=4,size=0.1,color=black,position=left,fontscale=1.5,label=$v_6$]{K76}
\Vertex[x=4,y=3.2,size=0.1,color=black,position=left,fontscale=1.5,label=$v_{5}$]{K75}
\Vertex[x=4,y=2.4,size=0.1,color=white,position=left,fontscale=1.5,label=$v_{4}$]{K74}
\Vertex[x=4,y=1.6,size=0.1,color=white,position=left,fontscale=1.5,label=$v_{3}$]{K73}
\Vertex[x=4,y=0.8,size=0.1,color=black,position=left,fontscale=1.5,label=$v_2$]{K72}
\Vertex[x=4,y=0,size=0.1,color=white,position=left,fontscale=1.5,label=$v_1$]{K71}

\Edge(K711)(K712)
\Edge[bend=-20](K71)(K76) \Edge[bend=-20](K72)(K73) \Edge[bend=-25](K74)(K75) \Edge[bend=-20](K77)(K710) \Edge[bend=20](K79)(K78)

\Vertex[x=8.8,y=0,size=0.1,color=black,shape=rectangle,position=right,fontscale=1.5,label=$v_{12}$]{K812}
\Vertex[x=8.8,y=0.8,size=0.1,color=black,shape=rectangle,position=right,fontscale=1.5,label=$v_{11}$]{K811}
\Vertex[x=8.8,y=1.6,size=0.1,color=white,position=right,fontscale=1.5,label=$v_{10}$]{K810}
\Vertex[x=8.8,y=2.4,size=0.1,color=white,position=right,fontscale=1.5,label=$v_{9}$]{K89}
\Vertex[x=8.8,y=3.2,size=0.1,color=black,position=right,fontscale=1.5,label=$v_{8}$]{K88}
\Vertex[x=8.8,y=4,size=0.1,color=black,position=right,fontscale=1.5,label=$v_{7}$]{K87}

\Vertex[x=8,y=4,size=0.1,color=black,position=left,fontscale=1.5,label=$v_6$]{K86}
\Vertex[x=8,y=3.2,size=0.1,color=black,position=left,fontscale=1.5,label=$v_{5}$]{K85}
\Vertex[x=8,y=2.4,size=0.1,color=white,position=left,fontscale=1.5,label=$v_{4}$]{K84}
\Vertex[x=8,y=1.6,size=0.1,color=white,position=left,fontscale=1.5,label=$v_{3}$]{K83}
\Vertex[x=8,y=0.8,size=0.1,color=black,position=left,fontscale=1.5,label=$v_2$]{K82}
\Vertex[x=8,y=0,size=0.1,color=white,position=left,fontscale=1.5,label=$v_1$]{K81}

\Edge(K811)(K812)
\Edge[bend=-45](K81)(K82) \Edge[bend=-25](K83)(K86) \Edge[bend=-25](K84)(K85) \Edge[bend=-25](K87)(K810) \Edge[bend=-25](K88)(K89) 

\end{tikzpicture}\caption{Pre-matching diagram, $K_7,$ and $K_8,$ respectively, in Example~\ref{ex:22}}\label{ex:22:6}
\end{center}
\end{figure}
\end{enumerate}

Therefore, we have the following inequalities:
\begin{enumerate}
\item $-\Delta_{[1~3~4~5~10~11]}\Delta_{[2~6~7~8~9~11]}+\Delta_{[1~3~4~6~10~11]}\Delta_{[2~5~7~8~9~11]} =\imm{K_3}+\imm{K_4}\geq 0$
\vspace*{2mm}

\item $\Delta_{[1~3~4~5~10~11]}\Delta_{[2~6~7~8~9~11]}-\Delta_{[1~3~4~6~10~11]}\Delta_{[2~5~7~8~9~11]} + \Delta_{[1~3~4~7~10~11]}\Delta_{[2~5~6~8~9~11]}$ \\
    $-\Delta_{[1~2~3~4~10~11]}\Delta_{[5~6~7~8~9~11]}~=\imm{K_5}+\imm{K_6}\geq 0$
\vspace*{2mm}
    
    \item $-\Delta_{[1~3~4~5~10~11]}\Delta_{[2~6~7~8~9~11]}+\Delta_{[1~3~4~6~10~11]}\Delta_{[2~5~7~8~9~11]} - \Delta_{[1~3~4~7~10~11]}\Delta_{[2~5~6~8~9~11]}$ 
    
    $+\Delta_{[1~2~3~4~10~11]}\Delta_{[5~6~7~8~9~11]}+\Delta_{[1~3~4~8~10~11]}\Delta_{[2~5~6~7~9~11]}~=\imm{K_7}+\imm{K_8}\geq 0$
\vspace*{2mm}
 
    \item The last inequality (in fact identity) is the long Pl\"ucker relation itself.
\end{enumerate}
\end{example}

Now we are ready to complete the proof of our main theorem. In this section, we prove one implication.

\begin{proof}[Proof of Theorem~\ref{th:ws1}: weak separability $\implies $ system of inequalities \eqref{ws11:eq3}]
Recall that $I,J$ are weakly separated $m$ element subsets in $[m+n],$ and we continue to use the notation in Definition~\ref{DIJlr}. Observe that since $I,J$ are weakly separated, the system of inequalities becomes the following, where $I^\uparrow$ is $I$ with elements arranged in the non-decreasing order:
\begin{align*}
(-1)^{l}\sum^{l}_{k=1} (-1)^{k} \Delta_{I_{k,r}^\uparrow}\Delta_{J_{k,r}^\uparrow}  & \geq 0 \quad \forall~ l < \eta-r+1, \mbox{ and} \notag\\
(-1)^{l}\Big{(}\sum^{l}_{k=1}(-1)^{k}\Delta_{I_{k,r}^\uparrow}\Delta_{J_{k,r}^\uparrow} - \Delta_{I^\uparrow}\Delta_{J^\uparrow} \Big{)}  & \geq 0 \quad \forall~ l \geq \eta-r+1.  
\end{align*}
Henceforth in this proof we assume $I^{\uparrow}=I,$ $J^{\uparrow}=J,$ $I_{k,r}^{\uparrow}=I_{k,r}$ and $J_{k,r}^{\uparrow}=J_{k,r}.$ First, we observe that the linear combinations above satisfy the homogeneity property \eqref{eq:space}. This means in terms of Pl\"ucker coordinates that for $1 \leq k \leq \eta$, $I_{k,r}\Cup J_{k,r}=I\Cup J.$ The Kauffman diagrams $K \in \Phi(I,J)$ may have some mandatory edges connecting consecutive vertices which are caused by elements in $I\cap J.$ Since $I\cap J = I_{k,r}\cap J_{k,r},$ for $1 \leq k \leq m,$ the sets of mandatory edges $E(I_{k,r},J_{k,r})$ are the same as $E(I,J)$ for $1 \leq k \leq m.$ Thus in future consideration we can disregard the set of predetermined mandatory edges. And in order to count the coefficients $\{d_K~| K \in \Phi(I,J) \},$ we can focus on the remaining $2\eta$ vertices.

We divide the rest of the proof into three parts. In the first, we consider the case when $r=\eta.$ Since $I,J$ are weakly separated, the remaining vertices (after making the pre-matching diagram) form two arcs $[i_1~i_2~\dots~i_\eta],$ $[j_1~j_2~\dots~j_\eta]$ of opposite colors. Thus, there is exactly one $K_0 \in \Phi(I,J).$ Indeed, the only option to complete $E(I,J)$ to the matching is to connect pairs $(v_{i_k},v_{j_{\eta-k+1}})$ for $1 \leq k \leq \eta.$ There are exactly two $K_{k-1},~K_{k} \in \Phi(I_{k,r},J_{k,r})$ for $1 \leq k \leq \eta-1.$ Indeed, the only option to complete $E(I_{k,r},J_{k,r})$ to the matching is to connect the pair $(v_{i_k},v_{i_{k-1}})$ or the pair $(v_{i_k},v_{i_{k+1}})$. In both cases, the remaining $2\eta-2$ vertices form two consecutive intervals of opposite colors, which could be completed to a matching in $\Phi(I_{k,r},J_{k,r})$ in only one trivial way. By a similar argument, there is exactly one $K_{\eta-1} \in \Phi(I_{\eta,\eta},J_{\eta,\eta})$. Note that $\Phi(I_{k,r},J_{k,r})$ and $\Phi(I_{k+1,r},J_{k+1,r})$ share an element $K_k$ for $1 \leq k \leq \eta-1$. Also, $\Phi(I,J)$ and $\Phi(I_{1,r},J_{1,r})$ share an element $K_0$. Due to Theorems \ref{tm:plprod} and \ref{tm:plcomb} and the argument above we have that 
\begin{equation}\label{eq:5.5refinedN}
   \Delta_{I}\Delta_{J}+\sum^{l}_{k=1}(-1)^{k} \Delta_{I_{k,r}}\Delta_{J_{k,r}} =(-1)^{l}\imm{K_l}.
\end{equation}
Since any generalized minor of a totally nonnegative $A$ is nonnegative, the proof of this case is complete.

For the next, suppose $\eta\geq 3$ and $r\in [2,\eta-1].$ Since $I,J$ are weakly separated, the remaining vertices (after drawing the pre-matching diagram) form two consecutive intervals $[i_1~i_2~\ldots~i_\eta]$, $[j_1~j_2~\ldots~j_\eta]  $ of opposite colors. Thus, there is exactly one $K_0 \in \Phi(I,J).$

There are exactly two $K_{1},~K_{2} \in \Phi(I_{1,r},J_{1,r})$. Indeed, the only option to complete $E(I_{k,r},J_{k,r})$ to the matching is to connect the pair $(v_{i_r},v_{i_{r-1}})$ or the pair $(v_{i_r},v_{i_{r+1}})$. In each of these two cases, the remaining $2\eta-2$ vertices form two consecutive intervals of the opposite colors which could be completed to a matching in $\Phi(I_{1,r},J_{1,r})$ in only one trivial way.

There are exactly four $K_{2k-3},~K_{2k-2},~K_{2k-1},~K_{2k} \in \Phi(I_{k,r},J_{k,r})$, for $1<k<\eta$ with $k \neq \eta-r+1.$ Indeed, the only option to complete $E(I_{k,r},J_{k,r})$ (for $1<k<\eta$ with $k \neq \eta-r+1$) is to connect the pair $(v_{i_r},v_{i_{r-1}})$ or the pair $(v_{i_r},v_{i_{r+1}})$ and $(v_{j_k},v_{j_{k-1}})$ or pair $(v_{j_k},v_{j_{k+1}}).$  In each of these four cases the remaining $2\eta-4$ vertices form two consecutive intervals of the opposite colors which could be completed to a matching in $\Phi(I_{k,r},J_{k,r})$ in only trivial ways.

There is one exceptional case $k=\eta-r+1.$ There are exactly five $K_{2k-3},~K_{2k-2},~K_{2k-1},~K_{2k},$
$~K_{0} \in \Phi(I_{k,r},J_{k,r}).$ Besides four matchings like in the cases $1<k<\eta,~k \neq \eta-r+1$ there is also $K_0.$ This is the only possible $k$ such that $v_{r}$ can be connected by an edge with not its immediate neighbor vertex. The last case $k=\eta$ is similar to the case $k=1$ with two diagrams $K_{2\eta-3}$ and $K_{2\eta-2}.$ Due to Theorems \ref{tm:plprod} and \ref{tm:plcomb} and the arguments above, we have that 
$$
\sum^{l}_{k=1}(-1)^{k+l} \Delta_{I_{k,r}}\Delta_{J_{k,r}}=(-1)^{l}~(\imm{K_{2l-1}}+\imm{K_{2l}}),~~~l \in [1,m-r],
$$
while for $l\in [\eta-r+1,\eta-1],$
$$
(-1)^{\eta-r+l}\Delta_{I}\Delta_{J}+\sum^{l}_{k=1}(-1)^{k+l} \Delta_{I_{k,r}}\Delta_{J_{k,r}}=(-1)^{l}~(\imm{K_{2l-1}}+\imm{K_{2l}}).$$
Finally, for $l=\eta$ the inequality (in fact identity) is the long Pl\"ucker relation itself. Since any generalized minor of a totally nonnegative $A$ is nonnegative, we complete this part of the theorem.

For the final case when $r=1,$ note that only $K_1 \in \Phi(I_{1,r},J_{1,r})$ and it corresponds to the edge $(v_{i_1},v_{i_2})$ and $(v_{j_1},v_{j_2})$ with rest of the $2\eta-4$ vertices forming the trivial matching as they form two consecutive intervals of opposite colors. Next, for $K_1,K_2\in \Phi(I_{2,r},J_{2,r}),$ where $K_2$ corresponds to the matching with $(v_{i_1},v_{i_2})$ and $(v_{j_{2}},v_{j_3})$ and, again, $K_1$ corresponds to $(v_{i_1},v_{i_2})$ and $(v_{j_1},v_{j_2}).$ In each of these cases, the rest of the $2\eta-4$ vertices form the trivial matching. Similarly, for $k\in [2,\eta-1],$ we get $K_{k},K_{k+1} \in \Phi(I_{k,r},J_{k,r}).$ $K_{k}$ corresponds to edges $(v_{i_1},v_{i_2})$ and $(v_{j_{k}},v_{j_{k-1}})$ (with the rest of the vertices forming trivial matching), and $K_{k+1}$ corresponds to the edges $(v_{i_1},v_{i_2})$ and $(v_{j_{k}},v_{j_{k+1}})$ (with the other vertices forming the trivial matching). Finally, for $k=\eta,$ we have $K_{\eta}$ and $K_{0},$ where $K_{\eta}$ corresponds to $(v_{i_1},v_{i_2})$ and $(v_{j_\eta},v_{j_{\eta-1}})$ (with the remaining vertices forming the trivial matching) and $K_{0},$ the trivial matching, on all $2\eta$ vertices. With this discussion and Theorems~\ref{tm:plprod}, \ref{tm:plcomb}, and \ref{Ineq-equivalence}, we conclude this part of the proof.
%
\end{proof}

\section{Proof of reverse implication in Theorem~\ref{th:ws1}}\label{rev-imp}

In this section, we prove that if the system of inequalities \eqref{ws11:eq3} holds, then $I,J$ are necessarily weakly separated, thereby characterizing weak separability. We begin by showing some examples of non-weakly separable sets with specific inequalities in the system~\eqref{ws11:eq3} that do not hold.


Suppose $2\leq m\leq n$ are integers, and set $\eta=2$. Let $I,J \subset [m+n]$ be $m$ element tuples that are not weakly separated, with
$I \setminus J = \{ i_1 < i_2 \}$ and
$J \setminus I = \{ j_1 < j_2 \}$. Recall from Section~\ref{Sec:Pre} that, without loss of generality, we assume that $I \cap J =\emptyset$ and take $I=(i_1,i_2)=(1,3)$ and $J=(j_1,j_2)=(2,4),$ and discuss the corresponding system of inequalities using Kauffman diagrams on $4$ vertices. The Kauffman diagrams for $\Delta_{I}\Delta_{J}$ are:
\begin{center}
\begin{tikzpicture}
\Vertex[x=1,y=1,size=0.1,color=white,position=right,fontscale=1.5,label=$v_3$]{3}
\Vertex[x=1,y=0,size=0.1,color=black,position=right,fontscale=1.5,label=$v_4$]{4}
\Vertex[x=0,y=1,size=0.1,color=black,position=left,fontscale=1.5,label=$v_2$]{2}
\Vertex[x=0,y=0,size=0.1,color=white,position=left,fontscale=1.5,label=$v_1$]{1}
\Edge[bend=-45](1)(2) \Edge[bend=-45](3)(4) 

\Vertex[x=5,y=1,size=0.1,color=white,position=right,fontscale=1.5,label=$v_3$]{33}
\Vertex[x=5,y=0,size=0.1,color=black,position=right,fontscale=1.5,label=$v_4$]{44}
\Vertex[x=4,y=1,size=0.1,color=black,position=left,fontscale=1.5,label=$v_2$]{22}
\Vertex[x=4,y=0,size=0.1,color=white,position=left,fontscale=1.5,label=$v_1$]{11}
\Edge(22)(33) \Edge(11)(44) 
\end{tikzpicture}
\end{center}
Now choose $r=\eta=2,$ i.e., $v_{i_r} = v_3$ and note that $I_{1,r}=(1,2)$ and $J_{1,r}=(3,4).$ The corresponding Kauffman diagrams are:
\begin{center}
\begin{tikzpicture}
\Vertex[x=4,y=1,size=0.1,color=black,position=right,fontscale=1.5,label=$v_3$]{33}
\Vertex[x=4,y=0,size=0.1,color=black,position=right,fontscale=1.5,label=$v_4$]{44}
\Vertex[x=3,y=1,size=0.1,color=white,position=left,fontscale=1.5,label=$v_2$]{22}
\Vertex[x=3,y=0,size=0.1,color=white,position=left,fontscale=1.5,label=$v_1$]{11}
\Edge(22)(33) \Edge(11)(44) 
\end{tikzpicture}
\end{center}
Looking at the diagrams above, note that the first inequality in the system, i.e. $\sgn(I_{1,r})\sgn(J_{1,r})\Pi_1 = \Pi_1 = \Delta_{I_{1,r}}\Delta_{J_{1,r}} - \Delta_I \Delta_J \geq 0$ does not hold for this non-weakly separated choice of $I,J.$ This implies that for $\eta=2,$ the system of inequalities holds for the entire TNN Grassmannian if and only if $I$ and $J$ are weakly separated.


We now examine the situation for $\eta=3.$ Again, without loss of generality, we choose a specific $I=(1,2,4)$ and $J=(3,5,6).$ The Kauffman diagrams for $\Delta_I\Delta_J$ are:
\begin{center}
\begin{tikzpicture}
\Vertex[x=1,y=0,size=0.1,color=black,position=right,fontscale=1.5,label=$v_6$]{6}
\Vertex[x=1,y=1,size=0.1,color=black,position=right,fontscale=1.5,label=$v_5$]{5}
\Vertex[x=1,y=2,size=0.1,color=white,position=right,fontscale=1.5,label=$v_4$]{4}

\Vertex[x=0,y=2,size=0.1,color=black,position=left,fontscale=1.5,label=$v_3$]{3}
\Vertex[x=0,y=1,size=0.1,color=white,position=left,fontscale=1.5,label=$v_2$]{2}
\Vertex[x=0,y=0,size=0.1,color=white,position=left,fontscale=1.5,label=$v_1$]{1}

\Edge(1)(6) \Edge(2)(5) \Edge(3)(4)

\Vertex[x=5,y=0,size=0.1,color=black,position=right,fontscale=1.5,label=$v_6$]{66}
\Vertex[x=5,y=1,size=0.1,color=black,position=right,fontscale=1.5,label=$v_5$]{55}
\Vertex[x=5,y=2,size=0.1,color=white,position=right,fontscale=1.5,label=$v_4$]{44}

\Vertex[x=4,y=2,size=0.1,color=black,position=left,fontscale=1.5,label=$v_3$]{33}
\Vertex[x=4,y=1,size=0.1,color=white,position=left,fontscale=1.5,label=$v_2$]{22}
\Vertex[x=4,y=0,size=0.1,color=white,position=left,fontscale=1.5,label=$v_1$]{11}
 
\Edge[bend=45](33)(22) \Edge[bend=-45](44)(55) \Edge(11)(66) 
\end{tikzpicture}
\end{center}
Now take $r=\eta=3,$ and draw the Kauffman diagrams for $\Delta_{I_{1,r}}\Delta_{J_{1,r}} = \Delta_{(1,2,3)}\Delta_{(4,5,6)}$:
\begin{center}
\begin{tikzpicture}
\Vertex[x=1,y=0,size=0.1,color=black,position=right,fontscale=1.5,label=$v_6$]{6}
\Vertex[x=1,y=1,size=0.1,color=black,position=right,fontscale=1.5,label=$v_5$]{5}
\Vertex[x=1,y=2,size=0.1,color=black,position=right,fontscale=1.5,label=$v_4$]{4}

\Vertex[x=0,y=2,size=0.1,color=white,position=left,fontscale=1.5,label=$v_3$]{3}
\Vertex[x=0,y=1,size=0.1,color=white,position=left,fontscale=1.5,label=$v_2$]{2}
\Vertex[x=0,y=0,size=0.1,color=white,position=left,fontscale=1.5,label=$v_1$]{1}

\Edge(1)(6) \Edge(2)(5) \Edge(3)(4)
\end{tikzpicture}
\end{center}
Again, looking at the previous two Kauffman diagrams, the first inequality $\Pi_1 = \Delta_{I_{1,r}}\Delta_{J_{1,r}} - \Delta_I \Delta_J \geq 0$ does not hold.

The next essential case is $I=(1,3,5)$ and $J=(2,4,6).$ Draw the Kauffman diagrams for $\Delta_I\Delta_J$:
\begin{center}
\begin{tikzpicture}
\Vertex[x=1,y=0,size=0.1,color=black,position=right,fontscale=1.5,label=$v_6$]{6}
\Vertex[x=1,y=1,size=0.1,color=white,position=right,fontscale=1.5,label=$v_5$]{5}
\Vertex[x=1,y=2,size=0.1,color=black,position=right,fontscale=1.5,label=$v_4$]{4}

\Vertex[x=0,y=2,size=0.1,color=white,position=left,fontscale=1.5,label=$v_3$]{3}
\Vertex[x=0,y=1,size=0.1,color=black,position=left,fontscale=1.5,label=$v_2$]{2}
\Vertex[x=0,y=0,size=0.1,color=white,position=left,fontscale=1.5,label=$v_1$]{1}

\Edge(1)(6) \Edge(2)(5) \Edge(3)(4)

\Vertex[x=4,y=0,size=0.1,color=black,position=right,fontscale=1.5,label=$v_6$]{66}
\Vertex[x=4,y=1,size=0.1,color=white,position=right,fontscale=1.5,label=$v_5$]{55}
\Vertex[x=4,y=2,size=0.1,color=black,position=right,fontscale=1.5,label=$v_4$]{44}

\Vertex[x=3,y=2,size=0.1,color=white,position=left,fontscale=1.5,label=$v_3$]{33}
\Vertex[x=3,y=1,size=0.1,color=black,position=left,fontscale=1.5,label=$v_2$]{22}
\Vertex[x=3,y=0,size=0.1,color=white,position=left,fontscale=1.5,label=$v_1$]{11}
 
\Edge[bend=45](33)(22) \Edge[bend=-45](44)(55) \Edge(11)(66) 

\Vertex[x=7,y=0,size=0.1,color=black,position=right,fontscale=1.5,label=$v_6$]{666}
\Vertex[x=7,y=1,size=0.1,color=white,position=right,fontscale=1.5,label=$v_5$]{555}
\Vertex[x=7,y=2,size=0.1,color=black,position=right,fontscale=1.5,label=$v_4$]{444}

\Vertex[x=6,y=2,size=0.1,color=white,position=left,fontscale=1.5,label=$v_3$]{333}
\Vertex[x=6,y=1,size=0.1,color=black,position=left,fontscale=1.5,label=$v_2$]{222}
\Vertex[x=6,y=0,size=0.1,color=white,position=left,fontscale=1.5,label=$v_1$]{111}
 
\Edge(333)(444) \Edge[bend=45](666)(555) \Edge[bend=-45](111)(222) 

\Vertex[x=10,y=0,size=0.1,color=black,position=right,fontscale=1.5,label=$v_6$]{6666}
\Vertex[x=10,y=1,size=0.1,color=white,position=right,fontscale=1.5,label=$v_5$]{5555}
\Vertex[x=10,y=2,size=0.1,color=black,position=right,fontscale=1.5,label=$v_4$]{4444}

\Vertex[x=9,y=2,size=0.1,color=white,position=left,fontscale=1.5,label=$v_3$]{3333}
\Vertex[x=9,y=1,size=0.1,color=black,position=left,fontscale=1.5,label=$v_2$]{2222}
\Vertex[x=9,y=0,size=0.1,color=white,position=left,fontscale=1.5,label=$v_1$]{1111}
 
\Edge[bend=45](3333)(2222) \Edge[bend=45](6666)(5555) \Edge(1111)(4444) 

\Vertex[x=13,y=0,size=0.1,color=black,position=right,fontscale=1.5,label=$v_6$]{66666}
\Vertex[x=13,y=1,size=0.1,color=white,position=right,fontscale=1.5,label=$v_5$]{55555}
\Vertex[x=13,y=2,size=0.1,color=black,position=right,fontscale=1.5,label=$v_4$]{44444}

\Vertex[x=12,y=2,size=0.1,color=white,position=left,fontscale=1.5,label=$v_3$]{33333}
\Vertex[x=12,y=1,size=0.1,color=black,position=left,fontscale=1.5,label=$v_2$]{22222}
\Vertex[x=12,y=0,size=0.1,color=white,position=left,fontscale=1.5,label=$v_1$]{11111}
 
\Edge(33333)(66666) \Edge[bend=-45](11111)(22222) \Edge[bend=-45](44444)(55555) 
\end{tikzpicture}
\end{center}
Now take $r=\eta=3.$ We then have $I_{1,r}=(1,3,2)$ and $J_{1,r}=(5,4,6).$ The corresponding Kauffman diagrams is the trivial diagram. The first inequality in the corresponding system $$\sgn((1,3,2))\sgn((5,4,6))\Pi_1 = (-1)(-1) \big{(} \Delta_{I_{1,r}}\Delta_{J_{1,r}} - \Delta_I \Delta_J \big{)} \not\geq 0.$$ We essentially show (using Proposition~\ref{pr:sym}) that for $\eta=3$ the system of Pl\"ucker inequalities hold if and only if $I,J$ are weakly separated. 

\medskip

Now we are ready to prove the reverse implication in the proof of Theorem~\ref{th:ws1}.

\begin{proof}[Proof of Theorem~\ref{th:ws1}: system of inequalities \eqref{ws11:eq3} $\implies$ weak separability]
Suppose $1\leq \eta \leq m \leq n$ are integers, and let $I,J \subset [m+n]$ be $m$ element tuples that are not weakly separated, with $I\setminus J :=\{i_1, \dots, i_\eta\}$ and $J\setminus I :=\{j_1,\dots ,j_\eta\}$ in the notations in Definition~\ref{DIJlr}. 

\noindent\textbf{Case 1:} We checked earlier that if $\eta=2,3,$ the system of inequalities does not hold for any non-weakly separated sets. Suppose (for induction) that the system of inequalities holds for $\eta\geq 4$ and does not hold for $\eta-1$ and lower values.
Without loss of generality, assume $\eta=m.$ Thus $I$ and $J$ partition $I\cup J.$ We now show that the system does not hold for $\eta.$  Now, as $I$ and $J$ are not weakly separated, we have $p,q$ such that $i_p <_c j_q <_c i_{p+1}$ (or $j_p <_c i_q <_c j_{p+1}$) are consecutive in $I\cup J$ in the order defined in Definition~\ref{DIJlr}.

Now, consider $\Gr^{\geq 0}(m,m+n)_{i_p=j_q} \subset \Gr^{\geq 0}(m,m+n)$ as the collection of those subspaces for which the representative matrices have rows $i_p$ and $j_q$ identical. Now, since by hypothesis the inequalities hold over $\Gr^{\geq 0}(m,m+n),$ they also hold over $\Gr^{\geq 0}(m,m+n)_{i_p=j_q}.$ So, take any $r \neq p,$ i.e., take $i_r\neq i_p,$ and write the following inequalities for the given $I$ and $J$ over the Grassmannian:
\begin{align*}
\Pi_{l,r}(A) =
\begin{cases}
\Delta_{I_{l,r}}\Delta_{J_{l,r}} - \Delta_{I}\Delta_{J}, & \mbox{if }l=\eta-r+1, \mbox{ and} \\
\Delta_{I_{l,r}}\Delta_{J_{l,r}}, &\mbox{otherwise.}
\end{cases}
\end{align*}
Then $\sgn(I_{l,r})\,\sgn(J_{l,r})\sum_{k=1}^{l}\Pi_{k,r}\geq 0$ for all $l\in [1,\eta]$ and over $\Gr^{\geq 0}(m,m+n).$

\noindent Now, as the above inequalities hold over $\Gr^{\geq 0}(m,m+n)_{i_p=j_q},$ as the matrices that we have contain identical rows indexed by $i_p$ and $j_q,$ the inequalities above boil down to inequalities for $I':=I$ and $J':=(J\setminus \{j_q\})\cup\{i_p\}$ (in which $i_p$ replaces $j_q$ in $J$) with $I'\setminus J'=\{i_1',\dots,i_{\eta-1}'\}$ and $J'\setminus I'=\{j_1',\dots,j_{\eta-1}'\}.$ If $I'$ and $J'$ are not weakly separated, then we have the following system of inequalities, for $l,r'\in [\eta-1]$:
\begin{align*}
\Pi_{l,r'}(A) =
\begin{cases}
\Delta_{I_{l,r'}'}\Delta_{J_{l,r'}'} - \Delta_{I'}\Delta_{J'}, & \mbox{if }l=(\eta-1)-r'+1, \mbox{ and} \\
\Delta_{I_{l,r'}'}\Delta_{J_{l,r'}'}, &\mbox{otherwise,}
\end{cases}
\end{align*}
where $\sgn(I_{l,r'}')\,\sgn(J_{l,r'}')\sum_{k=1}^{l}\Pi_{k,r'}(A)\geq 0$ for all $l,r'\in [1,\eta-1]$ over $\Gr^{\geq 0}(m,m+n).$ This contradicts the induction hypothesis. Therefore $I$ and $J$ are necessarily weakly separated, and thus we conclude the proof. See Case 2 below if $I',J'$ are weakly separated.

\noindent\textbf{Case 2:} Without loss of generality, let $\eta=m.$ Thus $I$ and $J$ partition $I\cup J.$ Since Case 1 did not work for this pair, i.e. $I',J'$ are weakly separated, we can assume that there exist $b\in J$ and $c\in I$ such that $(I\setminus\{c\})\cup\{b\}$ and $(J\setminus\{b\})\cup\{c\}$ (in which $b$ and $c$ respectively replace each other) are contiguous intervals in $I\cup J$ in the clockwise order on the circle (defined in Definition~\ref{DIJlr}). 
Now take $r=\eta,$ i.e., $i_\eta = c.$ Then, as $j_1=b,$ $I_{1,r}=(I\setminus\{c\})\cup\{b\}$ and $J_{1,r}=(J\setminus\{b\})\cup\{c\}.$ Note that $\sgn(I_{1,r})\,\sgn(J_{1,r}) = 1,$ and thus consider the inequality 
$$
\sgn(I_{1,r})\,\sgn(J_{1,r}) \Pi_{1,r} = \Delta_{I_{1,r}}\Delta_{J_{1,r}} - \Delta_{I}\Delta_{J} \geq 0.
$$
This inequality should, in principle, hold over $\Gr^{\geq 0}(m,m+n).$ But that is not the case, which can be seen using the corresponding Kauffman diagrams (via Theorem~\ref{Ineq-equivalence}). More precisely, note that the set of Kauffman diagrams corresponding to $\Delta_{I_{1,r}}\Delta_{J_{1,r}}$ contains only the trivial diagram. However, on the other hand, $\Delta_{I}\Delta_{J}$ has more Kauffman diagrams. This proves the claim.
\end{proof}

\begin{remark}
The result in Case~2 of the above proof, or in the examples in this section above can also be proved by explicitly constructing TNN matrices using the classical Whitney's Theorem \cite{W} (or see \cite{fallat2023inequalities}). We leave this construction to the interested reader.
\end{remark}

\subsection*{Concluding remark}
One important aspect of Theorem~\ref{th:ws1} are the ``midpoints'' $\eta-r+1.$ These midpoints are unique (for each $r$) for weakly separated $I,J.$ This uniqueness plays a crucial role in proving that weak separability implies the system of Pl\"ucker-type inequalities. However, these midpoints are not unique, and in fact behave quite erratically for non-weakly separated sets. This essentially causes the converse in the main theorem to hold. The enigmatic nature of these midpoints for non-weakly separated Pl\"ucker coordinates inspires future explorations.


\section*{Acknowledgements}
We thank the American Institute of Mathematics (AIM, CalTech Campus) for their hospitality and the stimulating environment during a workshop on Theory and Applications of Total Positivity, held in July 2023. This allowed us to interact fruitfully with several other researchers which caused the beginning of this work. In particular, we thank Projesh Nath Choudhury, Shaun Fallat, Dominique Guillot, Himanshu Gupta, Chi-Kwong Li, and Shahla Nasserasr for all the encouraging and stimulating discussions during and after the AIM Workshop. We would also like to thank Michael Gekhtman for the helpful discussions, and Apoorva Khare for carefully going through previous versions of the manuscript, for suggesting improvements, and for all the encouraging discussions. We sincerely appreciate Steven Karp for writing to us and providing feedback that helped in further improving the manuscript.

P.K. Vishwkarma acknowledges the Pacific Institute for the Mathematical Sciences (PIMS) for its support. 

\begin{thebibliography}{10}
\bibitem{ASW}
{\sc M.~Aissen, I.~J. Schoenberg, and A.~Whitney}.
\newblock On generating functions of totally positive sequences.
\newblock {\em J. Anal. Math.\/}, {\bf 2} (1952) pp. 93--103.

\bibitem{ando}
{\sc T.~Ando}.
\newblock Totally positive matrices.
\newblock {\em Linear Algebra Appl.\/}, {\bf 90} (1987) pp. 165--219.

\bibitem{berenstein1997total}
{\sc A.~Berenstein and A.~Zelevinsky}.
\newblock Total positivity in Schubert varieties.
\newblock {\em Comment. Math. Helv.\/}, {\bf 72}, 1 (1997) pp.
  128--166.


\bibitem{Brosowsky-Chepuri-Mason-JCTA}
{\sc A.~Brosowsky, S.~Chepuri, and A.~Mason}.
\newblock Parametrizations of k-nonnegative matrices: Cluster algebras and k-positivity tests.
\newblock {\em  J. Combin. Theory Ser. A\/}, {\bf 174}, (2020) 105217.





\bibitem{Chepuri-Sherman-Bennett-CJM}
{\sc S.~Chepuri, and M.~Sherman-Bennett}.
\newblock 1324- and 2143-avoiding Kazhdan–Lusztig immanants and k-positivity.
\newblock {\em Can. J. Math.\/}, {\bf 75}, 1 (2023) pp 52--82.


\bibitem{Khare-Chou-Kannan}
{\sc P.~N.~Choudhury, A.~Khare, and  M.~R.~Kannan}.
\newblock Sign non-reversal property for totally non-negative and totally positive matrices, and testing total positivity of their interval hull.
\newblock {\em  Bull. London Math. Soc.\/}, {\bf 53}, 4(2021) pp 981--990.


\bibitem{FJTNNMatrices}
{\sc S.~M. Fallat and C.~R. Johnson}.
\newblock {\em Totally nonnegative matrices\/}.
\newblock Princeton Series in Applied Mathematics. Princeton University Press,
  Princeton, NJ (2011).

\bibitem{fallat2023inequalities}
{\sc S.~M. Fallat and P.~K. Vishwakarma}.
\newblock Inequalities for totally nonnegative matrices: Gantmacher--{K}rein, {K}arlin, and {L}aplace.
\newblock {arXiv:2305.16485\/} (2023).

\bibitem{FanMon}
{\sc K.~Fan and R.~M. Green}.
\newblock Monomials and {Temperley--Lieb} algebras.
\newblock {\em J. Algebra\/}, {\bf 190} (1997) pp. 498--517.


\bibitem{FarPost}
{\sc M.~Farber and A.~Postnikov}.
\newblock Arrangements of equal minors in the positive Grassmannian.
\newblock {\em Adv. Math.\/}, {\bf 300} (2016) pp. 788--834.

\bibitem{FP1912}
{\sc M. Fekete and G. P\'olya}.
\newblock \"Uber ein {Problem} von {Laguerre}.
\newblock {\em Rend. Circ. Mat. Palermo\/}, {\bf 34} (1912) pp. 89--120.




\bibitem{FWZ-Ch1-3}
{\sc S.~Fomin, L.~Williams, and A.~Zelevinsky}.
\newblock Introduction to Cluster Algebras. Chapter 1-3.
\newblock {arXiv:1608.05735\/} (2016).


\bibitem{FWZ-Ch7}
{\sc S.~Fomin, L.~Williams, and A.~Zelevinsky}.
\newblock Introduction to Cluster Algebras Chapter 7.
\newblock {arXiv:2106.02160\/} (2021).
  
  

\bibitem{fomin1999db}
{\sc S.~Fomin and A.~Zelevinsky}.
\newblock Double Bruhat cells and total positivity.
\newblock {\em J. Amer. Math. Soc.\/}, {\bf 12}, 2
  (1999) pp. 335--380.  
  

  
\bibitem{Galashin-Karp-Lam}
{\sc P.~Galashin, S.~ Karp, and T.~Lam}.
\newblock Regularity theorem for totally nonnegative flag varieties.
\newblock {\em J. Amer. Math. Soc.\/}, {\bf 35}, 2
  (2022) pp. 513--579.
  


\bibitem{Galashin-Pylyavskyy}
{\sc P.~Galashin and P.~Pylyavskyy}.
\newblock Ising model and the positive orthogonal Grassmannian.
\newblock {\em Duke Math. J.\/}, {\bf 169}, 10 (2020) pp. 1877--1942.
  

\bibitem{GantKreinOsc}
{\sc F.~R. Gantmacher and M.~G. Krein}.
\newblock {\em Oscillation matrices and kernels and small vibrations of
  mechanical systems\/}.
\newblock AMS Chelsea Publishing, Providence (2002).
\newblock Edited by A. Eremenko. Translation based on the 1941 Russian
  original.

\bibitem{gas2013total}
{\sc M.~Gasca and C.~A. Micchelli}.
\newblock {\em Total positivity and its applications\/}, vol. 359.
\newblock Springer Science \& Business Media (2013).

\bibitem{gekhtman2010cluster}
{\sc M.~Gekhtman, M.~Shapiro, and A.~Vainshtein}.
\newblock {\em Cluster algebras and Poisson geometry\/}.
\newblock 167, American Mathematical Society (2010).

\bibitem{GHJ}
{\sc F.~M. Goodman, P.~de~la Harpe, and V.~Jones}.
\newblock {\em {Coxeter Graphs and Towers of Algebras}\/}.
\newblock Springer--Verlag, New York (1989).

\bibitem{K2}
{\sc S. Karlin}.
\newblock {\em {Total positivity}\/}.
\newblock Vol I, Stanford University Press, Stanford (1968).




\bibitem{KauffState}
{\sc L.~Kauffman}.
\newblock State models and the {Jones} polynomial.
\newblock {\em Topology\/}, {\bf 26}, 3 (1987) pp. 395--407.

\bibitem{AK-book} 
A. Khare. 
{\em Matrix analysis and entrywise positivity preservers},
{Vol. \textbf{471}, London Mathematical Society Lecture Note Series, Cambridge University Press,} 2022.

\bibitem{LittlewoodTGC}
{\sc D.~E. Littlewood}.
\newblock {\em The {T}heory of {G}roup {C}haracters and {M}atrix
  {R}epresentations of {G}roups\/}.
\newblock Oxford University Press, New York (1940).


\bibitem{LusztigTP}
{\sc G.~Lusztig}.
\newblock Total positivity in reductive groups.
\newblock In {\em Lie Theory and Geometry: in Honor of Bertram Kostant\/}, vol.
  123 of {\em Progress in Mathematics\/}. Birkh{\"a}user, Boston (1994), pp.
  531--568.

\bibitem{lusztig1998introduction}
{\sc G.~Lusztig}.
\newblock Introduction to total positivity.
\newblock {\em Positivity in Lie theory: open problems\/} (1998), pp. 133--145.



\bibitem{OPostSpey2015}
{\sc S.~Oh, A.~Postnikov, and D.~Speyer}.
\newblock Weak separation and plabic graphs.
\newblock {\em Proc. Lond. Math. Soc.\/}, \textbf{3} 110 (2015), no.3 721--754.

\bibitem{Parisi-Sherman-Bennett-Williams}
{\sc M.~Parisi, M.~Sherman-Bennett, and L.~Williams}.
\newblock The $m=2$ amplituhedron and the hypersimplex: Signs, clusters, tilings, Eulerian numbers.
\newblock {\em Comm. Amer. Math. Soc.\/}, \textbf{3} (2023), 329--399.



\bibitem{postnikov2006total}
{\sc A.~Postnikov}.
\newblock Total positivity, Grassmannians, and networks.
\newblock {arXiv:math/0609764} (2006).



\bibitem{RSkanTLImmp}
{\sc B.~Rhoades and M.~Skandera}.
\newblock Temperley--{L}ieb immanants.
\newblock {\em Ann. Comb.\/}, {\bf 9}, 4 (2005) pp. 451--494.


\bibitem{JScott2006}
{\sc J.~Scott}.
\newblock Grassmannians and cluster algebras.
\newblock {\em Proc. London Math. Soc.\/}, {\bf 3}, 92 (2006) pp.
345--380.




\bibitem{Serhiyenko-Sherman-Bennett-Williams}
{\sc K.~Serhiyenko, M.~Sherman-Bennett, and L.~Williams}.
\newblock Cluster structures in Schubert varieties in the Grassmannian.
\newblock {\em Proc. London Math. Soc.\/}, {\bf 119}, 6 (2019) pp. 1694--1744.


\bibitem{skandera2004inequalities}
{\sc M.~Skandera}.
\newblock Inequalities in products of minors of totally nonnegative matrices.
\newblock {\em J. Algebr. Comb.\/}, {\bf 20}, 2 (2004) pp.
  195--211.


  

\bibitem{skan2022sosk}
{\sc M.~Skandera and D.~Soskin}.
\newblock Barrett--Johnson inequalities for totally nonnegative matrices.
\newblock {arXiv:2209.06466\/} (2022).

\bibitem{StanPos}
{\sc R.~Stanley}.
\newblock Positivity problems and conjectures.
\newblock In {\em Mathematics: Frontiers and Perspectives\/} ({\sc V.~Arnold,
  M.~Atiyah, P.~Lax, and B.~Mazur}, eds.). American Mathematical Society,
  Providence, RI (2000), pp. 295--319.

\bibitem{WestburyTL}
{\sc B.~W.~Westbury}.
\newblock The representation theory of the {Temperley--Lieb} algebras.
\newblock {\em Math. Z.\/}, {\bf 219} (1995) pp. 539--565.



\bibitem{W}
{\sc A. Whitney}. 
\newblock A reduction theorem for totally positive matrices. 
{\em J. Anal. Math.,} {\bf 2}:88-92 (1952).


\bibitem{LW}
{\sc L.~Williams}. 
\newblock The positive Grassmannian, the amplituhedron, and cluster algebras. 
{arXiv:2110.10856} (2021).
\end{thebibliography}

\vspace*{2cm}
\end{document}